\date{13th Dec}
\newcommand{\Z}{{\mathbb Z}}
\newcommand{\emptyword}{\epsilon}
\def\disting{{\mathbf d}}
\def\leftnbr{{\mathbf d}^L}
\def\rightnbr{{\mathbf d}^R}
\newcommand{\vv}{p}
\newcommand{\xx}{x} 
\newcommand{\uu}{r} 
\newcommand{\ee}{E} 
\newcommand{\ii}{I}
\newcommand{\jj}{J}
\newcommand{\kk}{K}
\renewcommand{\ll}{L}
\newcommand{\mm}{M}
\newcommand{\nn}{N}
\newcommand{\cR}{\mathcal{R}}
\newcommand{\Rev}{\mathsf{R}}
\newcommand{\fail}{{\bf{fail}}}
\renewcommand{\stop}{{\bf{stop}}}
\newcommand{\false}{\texttt{false}}
\def \p#1{{\rm pref}[#1]}
\def \s#1{{\rm suf}[#1]}
\def \f#1{{\rm f}[#1]}
\def \l#1{{\rm l}[#1]}
\newtheorem{theorem}{Theorem}[section] 
\newtheorem{lemma}[theorem]{Lemma} 
\newtheorem{proposition}[theorem]{Proposition}
\newtheorem*{proposition*}{Proposition}
\newtheorem{corollary}[theorem]{Corollary}
\theoremstyle{definition}
\newtheorem{definition}[theorem]{Definition}
\newtheorem{example}[theorem]{Example}
\newtheorem{procedure}[theorem]{Procedure}
\newenvironment{mylist}{\begin{list}{}{
\setlength{\parskip}{0mm}
\setlength{\topsep}{2mm}
\setlength{\parsep}{0mm}
\setlength{\itemsep}{0.5mm}
\setlength{\labelwidth}{7mm}
\setlength{\labelsep}{3mm}
\setlength{\itemindent}{0mm}
\setlength{\leftmargin}{12mm}
\setlength{\listparindent}{6mm}
}}{\end{list}}
\newenvironment{mylistb}{\begin{list}{}{
\setlength{\parskip}{0mm}
\setlength{\topsep}{2mm}
\setlength{\parsep}{0mm}
\setlength{\itemsep}{0.5mm}
\setlength{\labelwidth}{9mm}
\setlength{\labelsep}{3mm}
\setlength{\itemindent}{0mm}
\setlength{\leftmargin}{12mm}
\setlength{\listparindent}{6mm}
}}{\end{list}}
\newenvironment{mylistc}{\begin{list}{}{
\setlength{\parskip}{0mm}
\setlength{\topsep}{2mm}
\setlength{\parsep}{0mm}
\setlength{\itemsep}{0.5mm}
\setlength{\labelwidth}{1mm}
\setlength{\labelsep}{3mm}
\setlength{\itemindent}{0mm}
\setlength{\leftmargin}{4mm}
\setlength{\listparindent}{6mm}
}}{\end{list}}
\newenvironment{myproclist}{\begin{list}{}{
\setlength{\parskip}{0mm}
\setlength{\topsep}{0mm}
\setlength{\parsep}{0mm}
\setlength{\itemsep}{2mm}
\setlength{\labelwidth}{0mm}
\setlength{\labelsep}{0mm}
\setlength{\itemindent}{0mm}
\setlength{\leftmargin}{4mm}
\setlength{\listparindent}{3mm}
}}{\end{list}}
\title{Rewriting in Artin groups without $A_3$ or $B_3$ subdiagrams}
\author{Rub\'en Blasco-Garc\'ia, Mar\'ia Cumplido, Derek F. Holt,\\
Rose Morris-Wright and Sarah Rees}
\begin{document}
\maketitle

\begin{abstract}
We prove that the word problem in an Artin group $G$ based on a diagram without
$A_3$ or $B_3$ subdiagrams can be solved using a system  
of length
preserving rewrite rules which, together with free reduction, can be used to
reduce any word over the standard generators of $G$ to a geodesic word in $G$
 in quadratic time. This result builds on work of Holt and Rees, and of
Blasco, Cumplido and Morris-Wright. Those articles prove the same result for all Artin
groups that are either sufficiently large or 3-free, respectively.
\end{abstract}

{\bf Dedication:}

{\bf Acknowledgements:}
Mar\'ia Cumplido was supported by the research project PID2022-138719NA-I00, 
financed by MCIN/AEI/10.13039/501100011033/FEDER, UE, and by a Ram\'on y Cajal 2021 grant, 
also financed by the Spanish Ministry of Science and Innovation.
Sarah Rees was supported by Leverhulme foundation grant no.RPG-2022-025 during the work on this article.

\section{Introduction}
\label{sec:intro}
Our main result in this article is the following.
\begin{theorem}
	\label{thm:main}
Let $G$ be an Artin group defined over its standard generating set $S$ for
which the associated Coxeter diagram contains no $A_3$ or $B_3$ subdiagram;
	in other words, no subset $\{x,y,z\}$ of the standard generating set for $G$ satisfies the
relations $xyx=yxy$, $xz=zx$, $(y,z)_n = (z,y)_n$ with $n=3$ or 4.
Then there is a system of length preserving rewrite rules which, together
with free reduction, can be used to reduce any word over $S$ to a geodesic word
in $G$, in quadratic time.

Furthermore, any two geodesic words representing the same element of $G$ are
related by a sequence of rules from the system, each such involving just two
generators. 
\end{theorem}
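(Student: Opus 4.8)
The plan is to build a rewriting system $\cR$ consisting of the length-preserving relations carried by the rank-$2$ (dihedral) standard parabolic subgroups of $G$, and to use it together with free reduction $xx^{-1}\to\emptyword$. For each pair $\{x,y\}\subseteq S$ with $m(x,y)=m<\infty$ the defining relation $(x,y)_m=(y,x)_m$, together with its variants obtained by inverting letters and by the other identities valid in the dihedral Artin group on $\{x,y\}$, contributes length-preserving rules. The reduction algorithm alternates a \emph{length-preserving phase}, in which rules of $\cR$ are applied by a fixed deterministic strategy until none applies, with a single free reduction, and repeats; it halts at a geodesic exactly when one can prove the core claim: \emph{if a word $w$ over $S^{\pm1}$ is not geodesic, then some finite sequence of length-preserving rules of $\cR$ carries $w$ to a word that is not freely reduced.} The ``furthermore'' clause then becomes the separate assertion that the length-preserving part of $\cR$ acts transitively on the set of geodesic words representing any fixed $g\in G$.

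The core claim is proved by a local analysis: one shows that the failure of a non-geodesity to be witnessed by rank-$2$ moves is always caused by a sub-configuration involving at most three distinct generators, so it suffices to understand non-geodesic words inside standard parabolics $G_T$ with $|T|\le 3$ and to transfer the conclusion back to $G$ via the retractions onto standard parabolics. For $|T|\le 2$ this is the classical description of dihedral Artin groups. For $|T|=3$ one enumerates the Coxeter diagrams on $\{x,y,z\}$: a disconnected diagram reduces $G_T$ to lower rank and one finishes by induction; a connected diagram that is not the forbidden path --- $m(x,y)=3$, $m(x,z)=2$, $m(y,z)\in\{3,4\}$, i.e.\ $A_3$ or $B_3$ --- is either a triangle, so $G_T$ is of large type and the result of Holt--Rees applies; or a path both of whose edge labels are $\ge 4$, so $G_T$ is $3$-free and the result of Blasco--Cumplido--Morris-Wright applies; or a path with one edge label equal to $3$ and the other $\ge 5$ (possibly $\infty$), a genuinely new case for which one argues directly, adapting the techniques of the two predecessor papers, that a rank-$2$ move already exposes the non-geodesity. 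Executing this last family of cases, and verifying that the rearrangement needed to bring the offending sub-configuration into position is itself a composition of rules of $\cR$ and not merely an equality in $G$, is where I expect the main obstacle to lie.

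For the quadratic-time bound one controls both the number of rewrite steps and the cost of each. The standard device is a well-founded measure on words --- word length, refined by a weighted count of the positions at which a rank-$2$ rule can still be applied --- that strictly decreases throughout a length-preserving phase, so that between two consecutive free reductions only $O(n)$ length-preserving steps occur; with appropriate data structures, each step and the search for the next applicable rule cost $O(n)$. As there are at most $n/2$ free reductions in all, the total work is $O(n^2)$. Making this precise requires fixing a priority order among the rank-$2$ rules and the free-reduction rule so that no phase loops or runs super-linearly; this is routine in outline but is exactly where the quadratic --- as opposed to merely finite --- bound is secured.

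For the ``furthermore'' clause, note that two geodesics $u,v$ for the same $g\in G$ have equal length, so running the algorithm on $u$ never triggers a free reduction and never leaves the set of geodesics for $g$; it is therefore enough to show that the length-preserving rules act transitively on that set. I would induct on length, using rank-$2$ moves to align the first letters of $u$ and $v$ --- the only obstruction to this alignment being, once again, an $A_3$ or $B_3$ configuration, which the hypothesis excludes --- then deleting the common first letter and applying the inductive hypothesis. Each move used in this process involves only the two generators of a single rank-$2$ relator, which is precisely the assertion of the clause.
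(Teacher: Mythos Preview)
Your proposal has a genuine gap at its core: the reduction to rank-$\le 3$ standard parabolics is asserted but never justified, and it is in effect the entire content of the theorem. You write that ``the failure of a non-geodesity to be witnessed by rank-$2$ moves is always caused by a sub-configuration involving at most three distinct generators,'' and propose to ``transfer the conclusion back to $G$ via the retractions onto standard parabolics.'' Two problems. First, general Artin groups do not admit retractions onto their standard parabolics (van der Lek gives you embeddings, not retractions), so the transfer mechanism you invoke does not exist. Second, and more seriously, a non-geodesic word over many generators need not have any $\le 3$-generator subword that is itself non-geodesic in the corresponding parabolic; the obstruction to reduction can be genuinely spread across the word with internal commuting letters interleaved between the ``active'' ones. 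Citing Holt--Rees for triangles and BCMW for $3$-free paths handles only words that already live in such a parabolic; it says nothing about how to localise a word over all of $S$.

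The paper does not attempt any such localisation. Instead it works globally: it introduces pseudo-$2$-generator and pseudo-$3$-generator critical words, which carry two or three distinguished ``pseudo-generators'' $\{a,b\}$ or $(a,b,c)$ but may contain arbitrarily many other internal letters obeying specified commutation constraints, and defines $\tau$-moves on these. It then organises such moves into \emph{rightward reducing sequences} (RRS), lets $W$ be the set of words admitting no RRS, and proves $W$ equals the geodesics by building a well-defined right action of $G$ on the set of $\sim$-classes in $W$ (where $\sim$ is generated by commutations and $2$-generator $\tau$-moves). The technical heart is showing this action is well-defined --- that commutations and $2$-generator $\tau$-moves interact compatibly with the unique \emph{optimal} RRS --- which is a long case analysis (Propositions~\ref{lem:6.1}--\ref{lem:6.4}). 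The ``furthermore'' clause falls out of the same action argument, not from an induction on length with a first-letter alignment; your proposed alignment step (``the only obstruction is $A_3$ or $B_3$'') is another unproved assertion of roughly the same strength as the theorem. Finally, the quadratic bound in the paper comes from a concrete linear-time procedure that, given $w\in W$ and a letter $x$, either certifies $wx\in W$ or finds the optimal RRS; your proposed ``well-founded measure refined by a weighted count of applicable positions'' is not specified enough to yield any bound, since rank-$2$ rules alone can cycle indefinitely without a carefully chosen strategy.
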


The notation $(y,z)_n$ used in the statement of Theorem~\ref{thm:main}
denotes an alternating string $\cdots yzyz$ of length $n$ ending with $z$, 
as explained in Section~\ref{sec:notation}.
Throughout this article, $G$ will be as defined in the statement of
Theorem~\ref{thm:main}.

This result, together with the main results of \cite{HR12,HR13,BCMW} that it extends, addresses the general open question of whether all Artin groups have soluble word problem, and if so, whether the word problem can always be solved using a combination of length preserving rewrites together with free reduction.

Example~\ref{eg:n_atleast5} within Section~\ref{sec:RRS}
shows that the condition that excludes $B_3$ subdiagrams
(corresponding to generators $x,y,z$ with $xyx=yxy$, $xz=zx$,
$(y,z)_4 = (z,y)_4$) is necessary for
our algorithm 
that reduces words to geodesics to work.

The length preserving rules 
referred to 
in the statement of Theorem~\ref{thm:main} generalise the $\tau$-moves
of~\cite{HR12,HR13,BCMW}. 
Each rule has the effect of replacing a \emph{critical subword} $u$ of the
input word by another word $\tau(u)$ of the same length, whose first and last
letters both differ from the corresponding letters within $u$. When a sequence
of such moves leads to a reduction of the input word, it does so by moving from
left to right within the word, replacing successive overlapping subwords, until
the letter at the right-hand end of the final replacement subword is the inverse
of the subsequent letter in the word, and so a free cancellation is possible. 

In the large and sufficiently large groups studied in \cite{HR12,HR13},
the word problem can be solved as just described using critical subwords, which
in this case
are defined to be 2-generator words with particular constraints. In \cite{BCMW} the
same type of solution was found to work with a more general definition of a
critical word that allowed it to be \emph{pseudo 2-generator} but not
necessarily 2-generator. 
In this article we define \emph{pseudo 3-generator critical
words}, and prove that, if these are used alongside the others, then the same
type of solution works in our group $G$.
Our arguments in this article are extensions of the arguments
used to prove the main result of \cite{BCMW}: in the case where $G$ is 3-free (that is, its Coxeter diagram contains no edges labelled $3$) 
no $\tau$-moves on pseudo 3-generator critical words are necessary
in our rewrite system;
in that case  
the rewrite system is exactly as described in \cite{BCMW}
and our arguments reduce to the arguments of that article.

Our article is structured as follows. After this introduction and a section
explaining our notation, we define critical words of three different types and
the $\tau$-moves that deal with them in Section~\ref{sec:critical_tau},
recalling the definitions of critical words that are 2-generator or pseudo
2-generator from \cite{HR12} and \cite{BCMW}, and then providing a definition
for pseudo 3-generator critical words. We observe some basic properties of these words
and their associated $\tau$-moves. 
Then in Section~\ref{sec:RRS} we define \emph{rightward reducing sequences} of
$\tau$-moves. We define a set $W$ to consist of all words that do not admit
such a sequence, and describe a procedure, Procedure~\ref{proc:unique_optRRS},
that can be used to reduce any word to a representative of the same element
within $W$ in quadratic time; its correctness is proved in
Proposition~\ref{prop:unique_optRRS}.
Finally Section~\ref{sec:proofs} contains the proof of Theorem~\ref{thm:main}.
That section starts with the statement of Theorem~\ref{thm:main_details}, which 
restates the result of Theorem~\ref{thm:main},
providing more detail; in particular the set $W$ is proved to be the set of all
geodesic representatives of the elements of $G$, and the rewrite system 
of
Theorem~\ref{thm:main} is found to be the set of all rightward reducing
sequences.  Technical details needed in the proof of
Theorem~\ref{thm:main_details} are provided in
Propositions~\ref{lem:6.1}--\ref{lem:6.4}, stated and proved after the proof 
of the theorem. 

We end this section with an example, which indicates why we needed to generalise
the work of \cite{BCMW} in order to deal with the groups $G$ of
Theorem~\ref{thm:main}.

\begin{example}
\label{eg:tricky_w_in_G}
Consider the word
$w:=(b,c)_{n-1}\cdot aba \cdot {}_{n-2}(c,b)x^{-1}$ in the rank 3 Artin group
\[ \langle a,b,c \mid aba=bab,\,ac=ca,\, (b,c)_n = (c,b)_n \rangle, \]
where 
${}_{n-2}(c,b)$ denotes the alternating string $cbc\cdots$ of length $n-2$ that
starts with $c$ (see Section~\ref{sec:notation}), and $x$ is equal to $c$ if
$n$ is even or to $b$ if $n$ is odd.

Since we have the relation $aba= bab$,
the word $w$ represents the same element of the group as
$w':=(b,c)_{n-1}\cdot bab\cdot {}_{n-2}(c,b)x^{-1}=
 (c,b)_n\cdot a\cdot {}_{n-1}(b,c)x^{-1}$,
and we can rewrite $w'$ applying the following relations (which are particular cases of the $\tau$-moves we define in Section~\ref{sec:critical_tau})
\[ (c,b)_n \rightarrow (b,c)_n,\,ca\rightarrow ac,\,{}_n(c,b)
  \rightarrow {}_n(b,c), \]
through which we have
\begin{eqnarray*}
	w' &\rightarrow& (b,c)_n\cdot a \cdot {}_{n-1}(b,c)x^{-1} \\
	&\rightarrow& (c,b)_{n-1} \cdot ac \cdot {}_{n-1}(b,c)x^{-1}
	= (c,b)_{n-1}\cdot a \cdot {}_n(c,b)x^{-1}\\
	&\rightarrow& (c,b)_{n-1}\cdot a \cdot {}_n(b,c)x^{-1},
\end{eqnarray*}
to derive from $w'$ a word ending with  the subword $xx^{-1}$. There is
certainly a sequence of 2-generator $\tau$-moves followed by a free
cancellation that reduces $w$ (via $w'$)
to a shorter word representing the same element;
but it is not a sequence of moves that moves 
rightward within the word. 
There exists no such sequence, even using the 
more general $\tau$-moves of \cite{BCMW}.
We note that the proofs in \cite{HR12,HR13,BCMW} that the rewrite
systems of those articles reduce a word  to the empty word $\emptyword$ 
if and only if it represents the identity rely on the fact that $\tau$-moves
are applied in what we will call rightward reducing sequences. 

In fact, the prefix $(b,c)_{n-1}\cdot aba$ of $w$ is a pseudo 3-generator 
critical subword in the sense of this article, and the move that 
replaces this by $(c,b)_{n-1}\cdot acb$ is the first step of a rightward moving
sequence of the more general $\tau$-moves of this article that leads to
reduction of $w$, as we shall see in Example~\ref{eg:tricky_w_in_G_2}
\end{example}

This article was motivated by the work of \cite{BCMW}, 
which provided a first generalisation of the methods of \cite{HR12,HR13}.  Our
proof is inspired by the proof of that article, and although this article can be read without
reference to the arguments of \cite{BCMW}, we have deliberately chosen notation
that is as close to the notation of \cite{BCMW} as was possible,  and will
refer to parts of that article at times, in order to aid comparison.

\section{Notation}\label{sec:notation}

We define $S$ to be the set of standard generators of the Artin group
$G$, and $A:= S \cup S^{-1}$, the set of standard generators and their inverses.

For $X \subseteq A$, we defined a \emph{word over $X$} to be a string of
elements from $X \cup X^{-1}$; the elements of that string are called the 
\emph{letters} of $w$. 
The \emph{length} of a word $w$ is its length as a string. 
A subword of $w$ that is a power of a letter, and is maximal as such, is called
a \emph{syllable} of $w$;
the \emph{syllable length} of $w$ is the number of syllables within it
(for example, the string $a^2b^{-3}ab^4$ is a word over $\{a,b\}$ of syllable
length 4).
Elements of $S$ are called \emph{positive} letters,
and elements of $S^{-1}:= \{x^{-1}: x \in  S\}$ negative letters;
a word is called positive if it involves only positive letters,
negative if it involves only negative letters, and otherwise is called
\emph{unsigned}.
We define the \emph{name} of a letter $x$ to be the generator within $\{x,x^{-1}\}$ to which it corresponds.
A word is called \emph{geodesic in $G$} if it has minimal length as a
representative of the element of $G$ that it represents.

For $x,y,z \in A$ with distinct names, we refer to words over $\{x,y\}$
as \emph{ $\{x,y\}$-words}, also as \emph{$2$-generator words},
and we refer to words over $\{x,y,z\}$
as \emph{ $\{x,y,z\}$-words}, also as \emph{3-generator words}.

For $x,y \in A$, as above, we define ${}_{n}(x,y)$ to be
the alternating string $xyxy \cdots$ of length $n$ beginning with $x$ and
$(y,x)_n$
to be the alternating string $\cdots yxyx$ of length $n$ ending with $x$
If $x,y$ are both positive, such a string is called \emph{positive alternating}
of length $n$, while if $x,y$ are both negative, it is called
\emph{negative alternating} of length $n$.

For any word $w$, we denote by $\f{w}$ and $\l{w}$ its first and last letter,
and by $\p{w}$ and $\s{w}$ its maximal proper prefix and maximal proper suffix.
Note that $w=\f{w}\s{w}=\p{w}\l{w}$.

\section{Critical words and $\tau$-moves}

\label{sec:critical_tau}
The concept of a critical word as a particular type of 2-generator word in an
Artin group was introduced in \cite{HR12},
and in that article and \cite{HR13} those words were an essential part of a
process for reduction in large and sufficiently large Artin groups. 
In this article we shall need further critical words, on three or more generators; these
will be of two kinds: pseudo 2-generator words (introduced in \cite{BCMW}) and
pseudo 3-generator critical words,
both defined in Section~\ref{sec:more_gens} below.

\subsection{Critical words and $\tau$-moves involving just two generators}
\label{sec:2gen}

Critical 2-generator words are geodesic in 2-generated Artin groups 
\cite{MairesseMatheus} and vital to the recognition of and reduction to geodesics in those Artin groups that are studied in \cite{HR12,HR13,BCMW}.
We recall the definition and some technical results from \cite{HR12,HR13} below,
as well as some further technical lemmas that we shall need in this article;
the proofs could easily be omitted from a first reading.

Let $X=\{x,y\}$ be a subset of $S$ with $x \neq y$, and let $m_{xy}$ be the
integer associated with $x,y$ in the presentation of $G$.
The element $\Delta$
of the subgroup $G_{xy}=\langle x,  y\,|\, {}_{m_{xy}}(x,y) ={}_{m_{xy}}(y,x) \rangle$ of $G$ that is represented by the 
alternating product $(x,y)_{m_{xy}}$ 
is known as the \emph{Garside element} of $G_{xy}$ and is 
also represented by the other alternating product $(y,x)_{m_{xy}}$.
Let $v$ be a word over $X$.
Following \cite{HR12} we  define $p(v)$ to be the minimum of $m_{xy}$ and the 
maximal length of a positive alternating subword of $v$,
and
$n(v)$ to be the minimum of $m_{xy}$ and the 
maximal length of a negative alternating subword of $v$. 

\begin{definition}[2-generator critical words]
\label{gen:2gen_critical}
The 2-generator word $u$ over $\{x,y\}$ is defined to be
\emph{($\{x,y\}$-)critical} if
\begin{mylist}
\item[(i)] $p(u)+n(u)=m_{xy}$; and
\item[(ii)] either 
\begin{mylist}
\item[(a)] $u$ is positive of the form ${}_{m_{xy}}(x,y)\xi$
or $\xi(x,y)_{m_{xy}}$ and $u$ has a unique subword $v$ with $p(v)=m_{xy}$;
or \item[(b)]
$u$ is negative of the form ${}_{m_{xy}}(x^{-1},y^{-1})\xi$ or
$\xi(x^{-1},y^{-1})_{m_{xy}}$
and $u$ has a unique subword $v$ with $n(v)=m_{xy}$;
or \item[(c)] $u$ is unsigned, and has one of the two forms
${}_{p(u)}(x,y)\xi (z^{-1},t^{-1})_{n(u)}$
or 
${}_{n(u)}(x^{-1},y^{-1})\xi (z,t)_{p(u)}$,
where $\{z,t\}=\{x,y\}$.
\end{mylist}
\end{mylist}
	Notice that $\xi$ above denotes any (positive, negative or unsigned) word over $\{x,y\}.$ \

\end{definition}
We remark that a critical word must have syllable length at least $m_{xy}$.

\begin{lemma}\label{lem:critlintest}
We can check in linear time whether a $2$-generator word is critical.
\end{lemma}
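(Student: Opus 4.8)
The plan is to give a direct algorithm that, on input a $2$-generator word $u$ over $\{x,y\}$ of length $\ell$, decides criticality in time $O(\ell)$. First I would compute $m:=m_{xy}$ from the presentation (a constant once $x,y$ are fixed) and scan $u$ once to record, for each position, the current run-length of the maximal positive alternating subword ending there and likewise for negative alternating subwords; from these running maxima one reads off $p(u)$ and $n(u)$ (each capped at $m$) in a single left-to-right pass. This immediately lets us test condition~(i), $p(u)+n(u)=m$, in linear time. I would also, in the same pass, classify $u$ as positive, negative, or unsigned by checking whether it contains a negative letter, a positive letter, or both.

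Next I would check condition~(ii) according to which case applies. In the positive case~(a) one must verify that $u$ has the prescribed prefix ${}_m(x,y)$ or suffix $(x,y)_m$ — a constant-length pattern match at the two ends — and that $u$ contains a \emph{unique} subword $v$ with $p(v)=m$, i.e.\ a unique maximal positive alternating run of length exactly $m$. Since we already have, from the initial scan, the list of positions where the running alternating-run-length reaches $m$, uniqueness is just the statement that all such positions lie within a single maximal run; that is a linear-time check. The negative case~(b) is symmetric. In the unsigned case~(c), using the already-computed values $p(u)$ and $n(u)$, I would pattern-match the prefix against ${}_{p(u)}(x,y)$ or ${}_{n(u)}(x^{-1},y^{-1})$ and the suffix against $(z^{-1},t^{-1})_{n(u)}$ or $(z,t)_{p(u)}$ with $\{z,t\}=\{x,y\}$; each is a check of a prefix/suffix of bounded-by-$m$ length and the remaining middle is unconstrained ($\xi$ is arbitrary), so this too is $O(\ell)$.

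I expect the only mildly delicate point to be the uniqueness requirement in~(ii)(a)--(b): one has to be careful that ``unique subword $v$ with $p(v)=m$'' is interpreted as a unique \emph{location}, so that, e.g., within one alternating run of length $m+1$ the two length-$m$ windows do not count as distinct occurrences. Once this is pinned down (the convention is already fixed by the remark that a critical word has syllable length at least $m$, together with Definition~\ref{gen:2gen_critical}), the argument is a routine bookkeeping over the single linear scan, and the total running time is $O(\ell)$ as claimed. $\Box$
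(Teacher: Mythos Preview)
Your proposal is correct and follows essentially the same approach as the paper's (very terse) proof, just spelled out in much more detail: compute $p(u)$ and $n(u)$ in a single pass, classify the sign, and then check the prefix/suffix patterns and the uniqueness condition directly. One minor correction: your suggested reading of the uniqueness clause in (ii)(a)--(b) is backward --- the two length-$m$ windows inside an alternating run of length $m+1$ \emph{do} count as distinct occurrences, so such a word \emph{fails} to be critical --- but as you yourself observe, once the convention is fixed this is routine linear-time bookkeeping on the run-length data, so the argument stands.
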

\begin{proof} For any $w$, we can calculate $p(w)$ and $n(w)$. We can also 
test in a single scan of a signed word whether it is critical.
\end{proof}

\begin{lemma}\label{lem:critsubword}
Suppose that the critical $2$-generator word $u$ has a critical subword $v$,
and so $u = u_pvu_s$ for some words $u_p$, $u_s$. Then the word $vu_s$ has a
critical suffix.
\end{lemma}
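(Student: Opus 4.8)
The plan is to exploit the fact that criticality pins down $p$ and $n$. Write $m:=m_{xy}$. Since $v$ is a subword of $u$ we have $p(v)\le p(u)$ and $n(v)\le n(u)$, while $p(v)+n(v)=m=p(u)+n(u)$ by condition~(i) of Definition~\ref{gen:2gen_critical}; hence $p(v)=p(u)$ and $n(v)=n(u)$, and $v$ is positive, negative or unsigned exactly according as $u$ is. Because $v\subseteq vu_s\subseteq u$ the same squeeze gives $p(vu_s)=p(v)$ and $n(vu_s)=n(v)$, so $vu_s$ already satisfies condition~(i); the whole task is to arrange condition~(ii) for a suitable suffix of $vu_s$. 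I also note that $vu_s$ is exactly the suffix of $u$ of length $|u|-|u_p|$, so finding a critical suffix of $vu_s$ is the same as finding a critical suffix of $u$ that starts at or after the first letter of $v$. The proof then proceeds by cases on the type of $v$ in Definition~\ref{gen:2gen_critical}(ii).

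If $v$, and hence $u$, is positive, each of $v$ and $u$ has a unique length-$m$ positive alternating subword; for $u$ this is the prefix ${}_{m}(x,y)$ or the suffix $(x,y)_{m}$, and the copy inside $v$ must coincide with it by uniqueness in $u$, so comparing positions forces $u_p=\emptyword$ in the prefix case and $u_s=\emptyword$ in the suffix case; either way $vu_s$ equals $u$ or $v$ and is critical. The negative case is identical with all signs reversed. For the unsigned case I first reduce to one shape: the letterwise inversion $x_i\mapsto x_i^{-1}$ preserves criticality, concatenation, subwords and suffixes while interchanging the two shapes of Definition~\ref{gen:2gen_critical}(ii)(c), so I may assume $u={}_{p}(x,y)\,\xi_u\,(z^{-1},t^{-1})_{n}$ with $p=p(u)\ge 1$, $n=n(u)\ge 1$. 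If $u_s=\emptyword$ then $vu_s=v$ is critical, so take $u_s\ne\emptyword$; then $vu_s$ is a suffix of $u$ of length $>n$, hence ends in a negative alternating block of length exactly $n=n(vu_s)$. If $v$ has shape ${}_{p}(a,b)\,\xi_v\,(c^{-1},d^{-1})_{n}$, then $vu_s$ also begins with a positive alternating block of length exactly $p=p(vu_s)$, so $vu_s$ itself has shape (c) and is critical. If instead $v$ has shape ${}_{n}(a^{-1},b^{-1})\,\xi_v\,(c,d)_{p}$, its final letter is positive, so $v$ ends strictly before $u$'s negative suffix and hence $|u_s|\ge n$; I then take the suffix $(c,d)_{p}\,u_s$ of $vu_s$: it contains the block $(c,d)_{p}$ so $p(\cdot)=p$, the word $u_s$ contains all of $u$'s negative suffix so $n(\cdot)=n$, and its length exceeds $n$, so it begins with a positive alternating block of length exactly $p$ and ends with a negative alternating block of length exactly $n$; thus it has shape (c) and is critical.

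The only real work is the unsigned case, and within it the ``opposite orientation'' sub-case where the positive alternating end-block of $v$ sits at the opposite end of $v$ from that of $u$. The point to get right there is that one must cut $vu_s$ precisely at the start of whichever extremal alternating block of $v$ has the same sign as $\l{u}$, and then verify the two maximality statements — that the retained word still carries a full positive block of length $p$ and a full negative block of length $n$, and nothing longer of either sign — which follow from the length bound $|vu_s|\ge|v|\ge m$ together with $|u_s|\ge n$ (respectively $\ge p$). I expect the bookkeeping of which block is a prefix and which is a suffix of $v$, and the check that $u_s$ is long enough to retain $u$'s extremal block, to be the fiddly part; everything else is immediate from Definition~\ref{gen:2gen_critical} and the squeeze on $p$ and $n$.
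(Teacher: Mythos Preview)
Your proof is correct and follows the same approach as the paper: establish $p(u)=p(v)=p(vu_s)$ and $n(u)=n(v)=n(vu_s)$ via the squeeze argument, then verify condition~(ii) of Definition~\ref{gen:2gen_critical}. The paper's proof stops after the squeeze and says ``the result follows easily''; your case analysis on the sign type of $v$ is precisely the verification the paper omits.
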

\begin{proof}
Since $p(u)+n(u) = p(v)+n(v) = m_{xy}$ with $p(v) \le p(u)$ and $n(v) \le n(u)$,
we must have $p(u)=p(v)=p(vu_s)$ and $n(u)=n(v)= n(vu_s)$, and the result
follows easily. 
\end{proof}
 
Still following \cite{HR12}, we define an involution $\tau$ on
the set of all
$\{x,y\}$-critical words as follows.

\begin{definition}[2-generator $\tau$-moves]
	\label{def:2gen_tau}
	First we define $\delta$ to be the permutation of $\{x,y\}$ 
	for which, for each $a \in \{x,y\}$,
	$\delta(a)$ is equal in $G_{xy}$ to $\Delta a \Delta^{-1}$;
	$\delta$ is the identity permutation if $m_{xy}$ is even, has order 2 
	if $m_{xy}$ is odd, and extends naturally to a length preserving permutation
	of the set of all words over $\{x,y\}$, with $\delta(a^{-1}) = \delta(a)^{-1}$,
	and $\delta(uv)$ the concatenation of $\delta(u)$ and $\delta(v)$.

Now, for unsigned 2-generator critical words, we define $\tau$ by
\begin{eqnarray*}
\tau({}_p(x,y)\,\xi\,(z^{-1},t^{-1})_n)
&:=& {}_n(y^{-1},x^{-1})\,\delta(\xi )\,(t,z)_p,\\
\tau({}_n(x^{-1},y^{-1})\,\xi\,(z,t)_p)
&:=& {}_p(y,x)\,\delta(\xi )\,(t^{-1},z^{-1})_n.
\end{eqnarray*}

Then, for positive and negative 2-generator critical words, we define $\tau$ as
follows, where $\xi$ is assumed non-empty in the final four equations.
\begin{eqnarray*}
	\tau({}_{m_{xy}}(x,y))&:=& {}_{m_{xy}}(y,x),\\
	\tau({}_{m_{xy}}(x^{-1},y^{-1}))&:=& {}_{m_{xy}}(y^{-1},x^{-1})\\
	\tau({}_{m_{xy}}(x,y)\,\xi) &:=& \delta(\xi)\,(z,t)_{m_{xy}},
\quad\hbox{\rm where}\quad z=\l{\xi},\,\{x,y\}=\{z,t\},\\
	\tau(\xi\,(x,y)_{m_{xy}}) &:=& {}_{m_{xy}}(t,z)\,\delta(\xi),
\quad\hbox{\rm where}\quad z=\f{\xi},\,\{x,y\}=\{z,t\},\\
	\tau({}_{m_{xy}}(x^{-1},y^{-1})\,\xi) &:=& \delta(\xi)\,(z^{-1},t^{-1})_{m_{xy}},
\quad\hbox{\rm where}\quad z=\l{\xi}^{-1},\,\{x,y\}=\{z,t\},\\
	\tau(\xi\,(x^{-1},y^{-1})_{m_{xy}}) &:=& {}_{m_{xy}}(t^{-1},z^{-1})\,\delta(\xi),
\quad\hbox{\rm where}\quad z=\f{\xi}^{-1},\,\{x,y\}=\{z,t\}.
\end{eqnarray*}
\end{definition}

It is verified in \cite{HR12} that a 2-generator critical word $u$ and its
image $\tau(u)$ represent the same element of the subgroup
$G_{xy}$ of $G$; hence $u=_G \tau(u)$. Replacement of a critical
subword $u$ of a (probably) longer word by $\tau(u)$ is called a $\tau$-move.

In  this article we shall often refer to the $\tau$-moves we have just defined
on 2-generator critical words as \emph{2-generator $\tau$-moves} in order to
distinguish them from more general $\tau$-moves defined in \cite{BCMW} for
pseudo 2-generator (P2G) words and in this article for (other) words on more than two generators.

\begin{lemma}\label{lem:2gengeo}
Let $w$ and $w'$ be geodesic words over the standard generators of a
$2$-generator Artin group that represent the same element of the group. Then we
can transform $w$ to $w'$ using a sequence of 2-generator $\tau$-moves.
\end{lemma}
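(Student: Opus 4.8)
The plan is to work entirely inside the two-generator Artin group $G_{xy}$ and use the combinatorics of the Garside element $\Delta$. First I would recall the structure theory of geodesics in dihedral-type Artin groups (Mairesse--Matheus, as cited): every element of $G_{xy}$ has a geodesic representative, and the set of geodesic words representing a fixed element is well understood. The key classification is that a word over $\{x,y\}$ is geodesic in $G_{xy}$ if and only if, after free reduction, it contains no positive alternating subword of length $>m_{xy}$, no negative alternating subword of length $>m_{xy}$, and — in the unsigned case — no subword of the form ${}_p(x,y)\xi(z^{-1},t^{-1})_n$ with $p+n > m_{xy}$ (and the mirror-image forms); in other words, a geodesic is never ``critical with excess''. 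So the critical words sit exactly at the boundary where a $\tau$-move trades one geodesic for another of the same length.

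The main step is to show that any two geodesic representatives $w,w'$ of the same group element differ by a single $\tau$-move when they are ``adjacent'' in a suitable sense, and then to induct. Concretely, I would argue as follows. If $w$ and $w'$ are both positive, then by the Garside normal form theory in $G_{xy}$ (the positive monoid embeds, and $\Delta$-conjugation is the permutation $\delta$) the positive word problem is solved by the two braid-type relations ${}_{m_{xy}}(x,y)={}_{m_{xy}}(y,x)$, so $w$ and $w'$ differ by a sequence of applications of this single relation — each of which is a 2-generator $\tau$-move of the form $\tau({}_{m_{xy}}(x,y))={}_{m_{xy}}(y,x)$ applied to a subword (for geodesic positive words one checks the subword being rewritten is automatically critical, using that no longer alternating subword can occur). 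The same argument, mirrored, handles two negative words. For the unsigned case, I would push all the ``positive part'' of $w$ to the left and the ``negative part'' to the right using $\tau$-moves — i.e.\ normalise $w$ to a canonical geodesic form $\Delta$-times-a-short-word, or more precisely to the form ${}_p(x,y)\,\xi\,(z^{-1},t^{-1})_n$ with $\xi$ of minimal syllable length — and show every geodesic representative reduces under $\tau$-moves to the same canonical form. Uniqueness of the canonical form follows because it encodes the image of the element under the retraction to the spherical quotient together with the ``winding'' data, which is an invariant.

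The hard part will be the bookkeeping in the unsigned case: one must verify that each intermediate word produced while migrating syllables past each other is genuinely critical (so that the $\tau$-move is legal), and that the process terminates. I expect to handle termination by a monovariant — for instance the pair (syllable length, position of the first negative syllable) under lexicographic order, which strictly decreases under the normalising moves — and legality by the observation in Lemma~\ref{lem:critsubword} together with the geodesic hypothesis, which forbids the alternating subwords from growing too long. A subtlety worth flagging is the parity of $m_{xy}$: when $m_{xy}$ is odd, $\delta$ is a nontrivial transposition, so the canonical form and the moves must track the $\delta$-twist carefully; the even case is cleaner since $\delta=\mathrm{id}$. Once the canonical form is shown to be unique and reachable from every geodesic representative by $\tau$-moves, the lemma follows by going from $w$ down to the canonical form and then back up to $w'$, since $\tau$ is an involution and hence every move is reversible.
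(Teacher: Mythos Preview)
Your high-level strategy matches the paper's exactly: reduce both $w$ and $w'$ to a common canonical representative via $\tau$-moves, then use that $\tau$ is an involution to reverse one of the chains. The difference is that the paper does not redo the combinatorics: it simply invokes \cite[Theorem~2.4]{HR12}, which already establishes that every geodesic word in a $2$-generator Artin group can be transformed by $2$-generator $\tau$-moves to its shortlex normal form. With that citation in hand, the proof is two lines.

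Your proposal, by contrast, is an outline for reproving (a variant of) that cited theorem from scratch. The positive and negative cases are fine, but the unsigned case --- which you yourself flag as ``the hard part'' --- is not actually carried out: the canonical form is not pinned down, the monovariant is only conjectured, and the legality check (that every intermediate subword you want to rewrite is genuinely critical in the sense of Definition~\ref{gen:2gen_critical}) is deferred to a hand-wave involving Lemma~\ref{lem:critsubword}, which is about critical subwords of words already known to be critical and does not by itself do the job you need. None of this is wrong in spirit, but as written it is a sketch of a programme rather than a proof, and it duplicates work already done in \cite{HR12}. If you are writing this up in the context of the present paper, the clean move is to cite \cite[Theorem~2.4]{HR12} and note reversibility of $\tau$; if you want a self-contained argument, you will need to fill in the unsigned-case normalisation carefully, and at that point you are essentially rewriting a chunk of \cite{HR12}.
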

\begin{proof}  By \cite[Theorem 2.4]{HR12} we can transform each of $w$ and $w'$
to their (equal) shortlex normal forms using a sequence of (2-generator)
$\tau$-moves. Since the reverse of a 2-generator $\tau$-move is also a 
2-generator $\tau$-move, the result follows.
\end{proof}

We observe the following facts about 2-generator critical words and
$\tau$-moves, which we shall need later. 
\begin{lemma}
	\label{lem:2gen_taufacts}
For any 2-generator critical word $u$, the names of the first letters of $u$
and $\tau(u)$ are distinct, as are the names of the last letters of
$u$ and $\tau(u)$.
\end{lemma}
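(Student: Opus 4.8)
The statement to prove is Lemma~\ref{lem:2gen_taufacts}: for any 2-generator critical word $u$, the name of $\f{u}$ differs from the name of $\f{\tau(u)}$, and likewise for the last letters. The natural approach is a direct case analysis over the defining forms of a critical word given in Definition~\ref{gen:2gen_critical}, matched against the formulas for $\tau$ in Definition~\ref{def:2gen_tau}; in each case one simply reads off the first and last letters of $u$ and of $\tau(u)$ and checks they have different names. The key point to keep in mind throughout is that $\delta$ is a permutation of $\{x,y\}$, so $\delta$ never changes the \emph{name} of a letter from one of $\{x,y\}$ to the other unless... wait — that is exactly the subtlety, since when $m_{xy}$ is odd, $\delta$ swaps $x$ and $y$. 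So the argument must track whether $m_{xy}$ is even or odd, since $\delta(\xi)$ appears in the middle of several $\tau$-images and at the ends of others.

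First I would dispose of the easy cases. For the unsigned critical words of form ${}_{p}(x,y)\,\xi\,(z^{-1},t^{-1})_n$, we have $\f{u}=x$ while $\f{\tau(u)}={}_n(y^{-1},x^{-1})$ begins with $y^{-1}$, whose name is $y\neq x$; and $\l{u}=t^{-1}$ while $\l{\tau(u)}$ is the last letter of $(t,z)_p$, namely $z$, with $z\neq t$ since $\{z,t\}=\{x,y\}$. The same reading works for the other unsigned form by symmetry, and for the four pure-power cases ${}_{m_{xy}}(x,y)\mapsto{}_{m_{xy}}(y,x)$ etc., where the first and last letters are manifestly swapped.

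The cases needing care are the four ``mixed positive/negative alternating at one end, $\xi$ at the other'' forms, e.g. $\tau({}_{m_{xy}}(x,y)\,\xi)=\delta(\xi)\,(z,t)_{m_{xy}}$ with $z=\l{\xi}$, $\{x,y\}=\{z,t\}$. Here $\f{u}=x$ and $\f{\tau(u)}=\f{\delta(\xi)}=\delta(\f{\xi})$. If $m_{xy}$ is even then $\delta$ is the identity, so $\f{\tau(u)}=\f{\xi}$; I claim $\f{\xi}$ cannot be $x$ — indeed if it were, the positive alternating subword ${}_{m_{xy}}(x,y)$ followed by $\xi$'s first letter $x$ would either extend that subword or, more to the point, criticality forces $u$ to have a \emph{unique} maximal positive alternating subword of length $m_{xy}$, and $\f{\xi}=x$ would create a second one (the last $y$ of ${}_{m_{xy}}(x,y)$, preceded suitably) or violate condition (i) via $p(u)>m_{xy}$'s-worth of structure; this is the routine-but-must-be-stated part. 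If $m_{xy}$ is odd, $\delta$ swaps $x,y$, and one checks $\f{\xi}\neq y$ is forced by the analogous uniqueness/criticality constraint, so $\delta(\f{\xi})\neq x$. For the last letter: $\l{\tau(u)}=t$ and $\l{u}=\l{\xi}=z\neq t$, immediate. The remaining three mixed forms are handled identically, using in each the uniqueness clause of Definition~\ref{gen:2gen_critical}(ii)(a) or (b), and the $\delta$-parity bookkeeping at whichever end $\delta(\xi)$ sits.

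The main obstacle is precisely this last group of cases: one must argue that the ``boundary'' letter of $\xi$ adjacent to the full-length alternating block cannot be the generator that would make $\f{\tau(u)}$ or $\l{\tau(u)}$ collide with $\f{u}$ or $\l{u}$ after applying $\delta$. I expect this to follow cleanly from the uniqueness requirement on the maximal alternating subword of length $m_{xy}$ in clauses (a),(b): if $\f{\xi}$ (for the $\xi(x,y)_{m_{xy}}$ shape, reading from the right) equaled the ``wrong'' letter, the length-$m_{xy}$ alternating block would not be unique, contradicting criticality. I would state this sublemma once and invoke it in all four mixed cases rather than repeating the argument. Everything else is a mechanical inspection of the displayed formulas.
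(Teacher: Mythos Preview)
Your plan is correct. The case analysis you outline is exactly the right one, and the key subtlety you identify --- that in the four ``mixed'' positive/negative forms, the boundary letter of $\xi$ adjacent to the full-length alternating block is constrained by the uniqueness clause in Definition~\ref{gen:2gen_critical}(ii)(a),(b) --- is precisely what makes the argument go through. Your parity bookkeeping for $\delta$ is also right: when $m_{xy}$ is even the last letter of ${}_{m_{xy}}(x,y)$ is $y$, so $\f{\xi}=x$ would produce a second alternating block of length $m_{xy}$, forcing $\f{\xi}=y$ and hence $\delta(\f{\xi})=y\neq x$; when $m_{xy}$ is odd the last letter is $x$, so $\f{\xi}=y$ is forbidden, giving $\f{\xi}=x$ and $\delta(\f{\xi})=y\neq x$. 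The dual shape $\xi\,(x,y)_{m_{xy}}$ works the same way at the other end.

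As for comparison with the paper: there is essentially nothing to compare. The paper does not prove this lemma at all but simply cites \cite[Proposition~2.1]{HR12}. Your self-contained case analysis is the natural direct argument and is almost certainly what that reference contains; supplying it here is a genuine improvement in exposition over deferring to an external source.
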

\begin{proof}
This proved as part of \cite[Proposition 2.1]{HR12}
\end{proof}

The following lemmas will be used in connection with P3G-critical words,
which will be defined in the next section. Lemma~\ref{lem:abcrit2} will be
used in the proof that our rewriting procedure for solving the word problem
in the Artin group $G$ runs in quadratic time.

\begin{lemma}\label{lem:abcrit1}
Suppose that $m_{ab}= 3$, 
let $v$ be an $\{a,b\}$-word such that $\f{v}$ and $\l{v}$ both have name $a$,
and suppose that $v$ can be transformed using a sequence of $\tau$-moves, each applied to a critical subword of $v$, to a
word $v':= b^{i}a^{j}b^{k}$ with $i,j,k$ nonzero integers.
      Then $v$ is a critical word, and $v' = \tau(v)$.

Furthermore, if $v$ is a positive or a negative word then $|j|=1$, and for $\epsilon = \pm 1$ either
\begin{mylist}
\item[(i)] $|i|=1$ and
    $v = a^{\epsilon k'} b^{\epsilon} a^{\epsilon}$, $v'=b^{\epsilon}a^{\epsilon}b^{\epsilon k'}$
     with $k'>0$; or
\item[(ii)] $|k|=1$ and
$v = a^{\epsilon} b^{\epsilon} a^{\epsilon i'}$, $v'=b^{\epsilon i'}a^{\epsilon}b^{\epsilon}$
   with $i'>0$.
\end{mylist}

On the other hand, if $v$ is an unsigned word then for $\epsilon=\pm 1$ either 
\begin{mylist}
\item[(i)] $|i|=1$ and
$v = a^{\epsilon}b^{-\epsilon(j'-1)}a^{\epsilon(k'-1)}b^{-\epsilon} a^{-\epsilon}$, $v'=
    b^{-\epsilon} a^{-\epsilon j'}b^{\epsilon k'}$ with $j',k'>0$; or
\item[(ii)] $|k|=1$ and
  $v = a^{\epsilon}b^{\epsilon}a^{-\epsilon(i'-1)}b^{\epsilon(j'-1)}a^{-\epsilon}$,
$v'=b^{-\epsilon i'}a^{\epsilon j'}b^{\epsilon}$ with $i',j'>0$.
  $v = a^{\epsilon}b^{\epsilon}a^{-\epsilon(i'-1)}b^{\epsilon(j'-1)}a^{-\epsilon}$,
     $v'=b^{-\epsilon i'}a^{\epsilon j'}b^{\epsilon}$ with $i',j'>0$.
---
\end{mylist}
\end{lemma}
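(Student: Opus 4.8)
The plan is to work backwards from the target word $v' = b^i a^j b^k$ and use the known structure of $\tau$ together with Lemma~\ref{lem:critsubword} to pin down $v$ itself. First I would establish that $v$ is critical. We are given a sequence of $\tau$-moves on critical subwords taking $v$ to $v'$. Since each $\tau$-move preserves length and the element of $G$ represented, and since (by Lemma~\ref{lem:2gen_taufacts}) each changes the names of the first and last letters only when the critical subword reaches the relevant end of $v$, I would track how $\f{\cdot}$ and $\l{\cdot}$ evolve. The key point is that $v' = b^i a^j b^k$ has syllable length exactly $3 = m_{ab}$, which is the minimum possible for a critical $\{a,b\}$-word; combined with the fact that $\tau$-moves on proper subwords cannot decrease the total syllable count below $m_{ab}$ without the whole word being critical, this forces the first move in the sequence (or a suitable reordering of it) to be applied to $v$ itself. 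Here Lemma~\ref{lem:critsubword} is the crucial tool: if some $\tau$-move in the sequence acts on a proper critical subword $v_1$ of $v$, then $v$ has a critical suffix (or prefix, by the mirror-image version), and by iterating one reduces to the case where $v$ is critical and $v' = \tau(v)$.

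Next, with $v$ critical and $v' = \tau(v) = b^i a^j b^k$, I would simply run through the cases of Definition~\ref{gen:2gen_critical} and the corresponding formulas in Definition~\ref{def:2gen_tau}, specialised to $m_{ab}=3$ (so $\delta$ is the transposition swapping $a$ and $b$), under the constraint that both $\f{v}$ and $\l{v}$ have name $a$ and that $\tau(v)$ has the three-syllable form $b^{*}a^{*}b^{*}$. For the positive/negative case: $p(v)+n(v)=3$ with one of $p(v),n(v)$ zero, so $v$ is $\{a,b\}$-alternating-dominated of the form ${}_3(a,b)\xi$, $\xi(a,b)_3$, or their negatives; requiring $\f v$ and $\l v$ to have name $a$ while $\tau(v)$ begins and ends with $b$ forces $\xi$ to be a single syllable in $a$, giving precisely the two subcases (i) $v = a^{\epsilon k'}b^{\epsilon}a^{\epsilon}$ and (ii) $v = a^{\epsilon}b^{\epsilon}a^{\epsilon i'}$, with $|j|=1$ falling out of the $\tau$-formula. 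For the unsigned case: $p(v)=n(v)$ is impossible since they sum to the odd number $3$, so $\{p(v),n(v)\}=\{1,2\}$, and $v$ has the form ${}_{p(v)}(a,b)\xi(z^{-1},t^{-1})_{n(v)}$ or its sign-swap; applying $\tau$ and imposing the name and syllable-shape constraints on both ends again isolates the two displayed normal forms, with the exponents $i',j',k'>0$ recording the length of the free middle syllable in each configuration.

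The main obstacle I anticipate is the first paragraph rather than the second: the case analysis in the second paragraph is mechanical once $v = \tau(v')$ is known, but proving that the whole multi-step $\tau$-sequence collapses to a single $\tau$-move on all of $v$ requires care. One has to rule out the possibility that several $\tau$-moves on overlapping proper subwords conspire to produce the three-syllable target without $v$ being critical. I would handle this by an induction on the length of the $\tau$-sequence, using at each step: (a) that $\tau$ is an involution on critical words, so a move and its reverse cancel; (b) Lemma~\ref{lem:critsubword} to promote a critical subword to a critical suffix/prefix; and (c) the syllable-length bound $\ge m_{ab}$ for critical words, which prevents the intermediate words from having fewer than three syllables — so once three syllables in the pattern $b^{*}a^{*}b^{*}$ are reached, no further proper $\tau$-move is possible and the last move must have been the global one. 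I would also need the easy observation that a $\tau$-move strictly inside $v$ leaves $\f v$ or $\l v$ unchanged, which is how the hypothesis on the names of $\f v$ and $\l v$ gets leveraged to force the critical subword to abut both ends, i.e.\ to be all of $v$.
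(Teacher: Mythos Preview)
Your second paragraph (the case analysis once $v$ is known to be critical with $\tau(v)=v'$) is sound and is close in spirit to what the paper does at the end of each case. The problem is entirely in your first paragraph.

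The gap is your use of Lemma~\ref{lem:critsubword}. That lemma has as a \emph{hypothesis} that the ambient word $u$ is critical; it then says something about critical subwords of $u$. You are invoking it with $u=v$ in order to \emph{deduce} that $v$ is critical, which is circular. Without that, your induction on the length of the $\tau$-sequence does not get off the ground: for sequences of length greater than one, the intermediate words need not start and end in $a$, so the endpoint-name argument you sketch (``the critical subword must abut both ends'') only handles the base case of a single move. Your syllable-count observation also does not close the gap: a proper $\tau$-move can perfectly well land you on a three-syllable word (for instance $aba\,b^{k-1}\to bab\cdot b^{k-1}=bab^{k}$), so ``the last move must have been global'' is false as stated.

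The paper avoids this combinatorial difficulty entirely by appealing to the Mairesse--Math\'eus description of geodesics in the dihedral Artin group $G_{ab}$ (see \cite{MairesseMatheus}). Since $v'=b^{i}a^{j}b^{k}$ always satisfies $p(v')+n(v')\le 3$, it is geodesic; hence $v$ is geodesic; and since $v\ne v'$ there must be a second geodesic representative, forcing $p(v)+n(v)=3$ with $p(v)=p(v')$ and $n(v)=n(v')$. From there the paper does \emph{not} argue abstractly that $v$ is critical: instead it enumerates, in each sign case, all geodesic words equal in $G_{ab}$ to $b^{i}a^{j}b^{k}$ (in the positive case by listing the finite orbit under $aba\leftrightarrow bab$; in the unsigned case by noting that $b^{-1}v$ and $vb$ are non-geodesic and reading off the forced prefix and suffix of $v$). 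Picking out those words whose first and last letters have name $a$ yields the explicit forms in the statement, and one then simply checks that these are critical with $\tau(v)=v'$. If you want to repair your argument, the cleanest fix is to import this geodesic characterisation rather than to attempt the $\tau$-sequence induction.
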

\begin{proof} By \cite[Proposition 4.3]{MairesseMatheus}, a word $w$ in the
	$2$-generator Artin group $G_{ab}:= \langle a,b \mid aba=bab \rangle$ is geodesic if and
only if $p(w) + n(w) \le 3$, and there is a second geodesic word
defining the same group element if and only if $p(w)+n(w)=3$.
Since the word $v':= b^{i}a^{j}b^{k}$ satisfies $p(w)+n(w) \le 3$ for all values of
$i,j,k$, this word is geodesic and hence so is $v$, and since $v \ne w$ we must
have $p(v)+n(v) = 3$ with $p(v) = p(b^{i}a^{j}b^{k})$ and
$n(v) = n(b^{i}a^{j}b^{k})$.

Suppose first that $v$ is a positive word, so $p(v)=3$ and $n(v)=0$. The
case when $v$ is negative is similar. So $\f{v}=\l{v}=a$.
Since $p(b^{i}a^{j}b^{k}) = 3$, we must have $j=1$ and $i,k \ge 1$.
We claim that the only positive words that are equal
in $G_{ab}$ to $b^{i}a^{j}b^{k}=b^i a b^k$ are the words of the form
$b^{i'}aba^{i-i'}b^{k-1}$
for $0 \le i' \le i$ and $b^{i-1} a^{k-k'}bab^{k'}$ for $0 \le k' \le k$.
This claim can be verified by checking that any substitution of $aba$ by $bab$
or vice versa results in the replacement of one word of this form by another.
It follows from the claim, together with $\f{v} = \l{v}=a$,
that either $k=1$ and $i'=0$ and hence $v'=b^iab$ and $v=aba^i$,
or $i=1$ and $k'=0$ and hence $v'=bab^k$ and $v=a^kba$.
In both these cases we see that $v,v'$ are critical, and $\tau(v)=v'$. 

Otherwise $v$ is unsigned, so either $p(v)=2$ and $n(v)=1$, or $p(v)=1$ and
$n(v)=2$. Suppose that $p(v)=2$ and $n(v)=1$; the other case is similar.
Since the same applies to the word $b^{i}a^{j}b^{k}$ we must have either
$i,j>0$ and $k<0$, or $i<0$ and $j,k>0$.
Assume the former---again the other
case is similar. Then the word $b^{-1}v$ is not geodesic in $G_{ab}$ so,
using the results of~\cite{MairesseMatheus} again, we must have $p(b^{-1}v) =
n(b^{-1}v) = 2$, so $\f{v}=a^{-1}$. Similarly, since the word $vb$ is
non-geodesic, we must have $p(vb)=3$ and $n(vb)=1$, so $v$ has
the suffix $ba$, and hence $v$ is critical as claimed.

Still assuming that $v$ has the prefix $a^{-1}$ and the suffix $ba$ (the
other three cases are similar), we have $v = a^{-1} \xi ba =_{G_{ab}}
ba \delta(\xi) b^{-1}$ for some word $\xi$, and so
$b^{i-1} a^{j}b^{k+1} =_{G_{ab}} a \delta(\xi)$ with both words geodesics.
Now an unsigned geodesic word with first letter $a$ cannot have a
geodesic representative with first letter $b$ (because any $\tau$-move
changing this letter $a$ would change it to $b^{-1}$), so we must have
$i=1$ and, since $a^{j}b^{k+1}$ is the unique geodesic representative
of its group element, we have $a^{j-1}b^{k+1}=\delta(\xi)$,
$\xi = b^{j-1}a^{k+1}$. 
So now $v = a^{-1}b^{j-1}a^{k+1}ba$, with $j>0,k<0$, and
and for all possible $j,k$ we see that $v$ is critical, and
$\tau(v)= ba^jb^k=v'$.
\end{proof}

\begin{lemma}\label{lem:abcrit2}
Suppose that $m_{ab} = 3$, and let $v$ be an $\{a,b\}$-word such that $\f{v}$
and $\l{v}$ both have name $a$.
Then we can decide in linear time whether $v$ can be transformed using a
sequence of $\tau$-moves to a word $b^ia^jb^k$ with $i,j,k$ nonzero integers
and, if so, compute the resulting transformed word.
\end{lemma}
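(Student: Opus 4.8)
The plan is to reduce Lemma~\ref{lem:abcrit2} to Lemma~\ref{lem:abcrit1} together with the linearity statement of Lemma~\ref{lem:critlintest}. Lemma~\ref{lem:abcrit1} tells us that \emph{if} $v$ can be transformed to a word of the form $b^ia^jb^k$ at all, then $v$ is already critical and the target word is forced to be $\tau(v)$; moreover it pins down the shape of $v$ precisely (cases (i)/(ii), positive/negative/unsigned). So the decision problem collapses to: check whether $v$ is critical (linear time, by Lemma~\ref{lem:critlintest}), and then check whether $\tau(v)$ happens to have syllable length exactly $3$ with first and last syllables on the letter $b$ — equivalently, whether $v$ has one of the explicit forms listed in Lemma~\ref{lem:abcrit1}.

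Concretely, I would proceed as follows. First, compute $p(v)$ and $n(v)$ in a single left-to-right scan; if $p(v)+n(v)\neq 3$ then $v$ is not critical and, by Lemma~\ref{lem:abcrit1}, no such transformation exists, so return ``no''. Otherwise $v$ is geodesic and $p(v)+n(v)=3$, and we test criticality in one further scan (Lemma~\ref{lem:critlintest}). If $v$ is not critical, return ``no''. If $v$ is critical, we now know from Lemma~\ref{lem:abcrit1} that $v$ is transformable to some $b^ia^jb^k$ \emph{iff} $v$ has one of the prescribed forms; these are all characterised by local syllable data (the names and signs of the first two and last two syllables, together with $\f{v}=\l{v}$ having name $a$, which is a hypothesis). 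So we read off the syllable decomposition of $v$ — which is available from the scans already performed — and check membership in the finite list of templates: e.g.\ in the positive/negative case, $v=a^{\epsilon k'}b^\epsilon a^\epsilon$ or $v=a^\epsilon b^\epsilon a^{\epsilon i'}$; in the unsigned case, $v=a^\epsilon b^{-\epsilon(j'-1)}a^{\epsilon(k'-1)}b^{-\epsilon}a^{-\epsilon}$ or its mirror, together with the sign constraints on $i,j,k$. Each template match is an $O(1)$ test on the syllable sequence plus an $O(|v|)$ verification that the middle syllables have the correct exponents. If a template matches, output $v':=\tau(v)$, which is computed in linear time directly from Definition~\ref{def:2gen_tau}; otherwise return ``no''.

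It remains to argue correctness of this reduction: I need that whenever $v$ is critical and of one of the listed forms, the word $\tau(v)$ really is of the form $b^ia^jb^k$ (so ``yes'' answers are sound), and conversely that a critical $v$ not of these forms cannot be transformed to any $b^ia^jb^k$ by \emph{any} sequence of $\tau$-moves (so ``no'' answers are sound). Both directions are exactly what Lemma~\ref{lem:abcrit1} provides: the forward direction is the explicit computation of $v'=\tau(v)$ in each case of that lemma, and the backward direction is the opening assertion of Lemma~\ref{lem:abcrit1} that transformability forces $v$ critical with $v'=\tau(v)$, combined with the case analysis showing these are the only possibilities. Hence the algorithm is correct, and since it consists of a bounded number of linear scans plus $O(1)$ combinatorial checks, it runs in linear time.

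The main obstacle is not mathematical depth — Lemma~\ref{lem:abcrit1} does the real work — but rather bookkeeping: one must be careful that the ``shape of $v$'' conditions in Lemma~\ref{lem:abcrit1} are genuinely decidable by local syllable inspection (they are, since they only constrain the first two and last two syllables and the signs of the three syllables of $v'$), and that the converse direction of Lemma~\ref{lem:abcrit1} is stated for arbitrary $\tau$-move sequences, not just for a single move, so that a negative answer is truly justified. I would also remark that because $v'$ is forced, there is no search involved: we never actually apply a sequence of $\tau$-moves, we simply test criticality and form $\tau(v)$ once, which is what keeps the cost linear.
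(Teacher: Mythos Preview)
Your proposal is correct and follows essentially the same approach as the paper: compute $p(v)+n(v)$, rule out the case $p(v)+n(v)\neq 3$ via Lemma~\ref{lem:abcrit1}, and then test directly whether $v$ matches one of the finitely many explicit templates listed there. The only minor difference is that you insert an explicit criticality check (via Lemma~\ref{lem:critlintest}) before matching templates, whereas the paper goes straight to the template check; this extra step is harmless but redundant, since every template in Lemma~\ref{lem:abcrit1} is already a critical word.
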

\begin{proof}
We can calculate $p(v)$ and $n(v)$ in linear time and if $p(v) + n(v) \ne 3$
then no such transformation is possible, and we return \false.
So we can assume that $p(v)+n(v)=3$ and hence that $v$ is a geodesic word.
If $v$ is a positive word then, by Lemma~\ref{lem:abcrit1}, the
transformation in question exists if and only if either $j=k=1$ and $v=a^kba$,
or $i=j=1$ and $v = aba^i$, which we can test directly in linear time.
The case when $v$ is a negative word is similar.

Similarly, if $v$ is unsigned, then the transformation in question exists if
and only if $v$ has one of the forms defined in Lemma~\ref{lem:abcrit1},
which can again be tested in linear time.
\end{proof}

\subsection{Critical words and $\tau$-moves involving more than two generators}
\label{sec:more_gens}
In addition to the 2-generator critical words defined in Section~\ref{sec:2gen},
we shall need two kinds of critical words involving more than two generators,
both words that are \emph{pseudo 2-generator} as defined in \cite{BCMW} and
some further words that we shall call \emph{pseudo 3-generator critical words},
and define below.
For clarity, we repeat the definition of pseudo 2-generator critical words from
\cite[Definition 3.1]{BCMW} as Definition~\ref{def:P2G} below.

\begin{definition}[Pseudo 2-generator critical words of type $\{a,b\}$]
\label{def:P2G}
Let $a, b \in S$ with $2 < m_{ab} < \infty$ and denote by
$P$ the set $\{a, b\}$. Let $u \in A^*$ be a word such that
$\f{u}, \l{u} \in P \cup P^{-1}$. Let $u_p$ be the prefix of $u$ up to
but not including the first instance of a letter in $P \cup P^{-1}$
with a different name from $\f{u}$.
Similarly, let $u_s$ be the longest suffix of $u$ that starts after the last letter of $u_p$ and does not contain a
letter in $P \cup P^{-1}$ with a name different from $\l{u}$.
Factor $u$ as
$u = u_p u_q u_s$.

We say that $u$ is a \emph{pseudo 2-generator (P2G) word} of type $\{a,b\}$
if all of the letters in $u_p$ commute with $\f{u}$, all of the
letters in $u_q$ not in $P \cup P^{-1}$ commute with both $a$ and $b$,
and all of the letters in $u_s$ commute with $\l{u}$. 

We call the letters of  $P=\{a,b\}$ 
the \emph{pseudo-generators} of $u$
and the letters of $u$ not in
$P \cup P^{-1}$ its \emph{internal} letters.
We define 
$\hat{u}$ to be the word obtained from $u$ by deleting all of the
internal letters.

A P2G word $u$ over $\{a,b\}$ is said to be
\emph{($\{a,b\}$-)critical} if $\hat{u}$ is a critical
$2$-generator word over $\{a,b\}$.
\end{definition}

\begin{definition}[Subwords $\alpha(u),\beta(u),\rho(u)$ of a
P2G-critical word]\label{def:P2Gabr}

We now define some further notation associated with a given P2G word $u$.
If $u$ is a P2G word, as above, then it 
can be transformed using commuting relations to a word
$\alpha \rho \hat{u} \beta$, where $\hat{u}$ is as defined in
Definition~\ref{def:P2G} and $\alpha:= \alpha(u)$,
$\beta:= \beta(u)$, $\rho := \rho(u)$ are defined as follows.
\begin{mylistb}
\item[$\alpha(u)$] is the word obtained from $u_p$ by deleting from it all
letters in $P \cup P^{-1}$.

By definition of a P2G word, each letter in $\alpha$ must commute with
$\f{u}$ and may or may not commute with the other pseudo-generator.
\item[$\beta(u)$] is the word obtained from $u_s$ by deleting from it all
letters in $P \cup P^{-1}$
as well as all letters in it that can be pushed past the leftmost letter
$\f{\hat{u}}$ of $\hat{u}$ within $u$ via commutations. 
By definition of a P2G word, each letter in $\beta$ must commute with $\l{u}$
and may or may not commute with the other pseudo-generator.
The word $\beta$ cannot contain a letter that commutes with both pseudo-generators if
that letter can be pushed past the first letter of $\hat{u}$, possibly
as part of a block of similar such letters; such letters belong in $\rho$.

In particular, $\f{\beta}$ cannot compute with both pseudo-generators,
and nor can any letter of $\beta$ that already commutes with all letters to
the left of it within $\beta$.
\item[$\rho(u)$] is the word obtained from $u$ by deleting all letters in $P \cup P^{-1}$
and all of the letters in $\alpha$ and $\beta$, and is a subword of $u_qu_s$.
It follows from the definitions of $\alpha$ and $\beta$ that each
letter in $\rho$ must commute with both pseudo-generators.
\end{mylistb}

The word $\alpha \rho \hat{u} \beta$ can be obtained from $w$ by pushing as
many internal letters as possible to the left via commuting relations, and then
pushing the remaining internal letters to the right via commuting relations.

\end{definition}

The following definition of $\tau$-moves for  critical P2G words is that of
\cite{BCMW}, which extends the definition of \cite{HR12,HR13} for 2-generator
words.

\begin{definition}[P2G $\tau$-moves] \label{def:P2G_tau}
Let $u$ be an $\{a,b\}$-critical P2G word.  Let
$\alpha(u),\rho(u)$, and $\beta(u)$ and the 2-generator critical word
$\hat{u}$ be as in Definition~\ref{def:P2G}, and $\tau(\hat{u})$ as in
Definition~\ref{def:2gen_tau}.
We define
\[\tau(u) := \alpha(u)\rho(u)\tau(\widehat{u})\beta(u),\] 
We shall refer to the replacement by $\tau(u)$ of a P2G-critical word $u$
a \emph{$\tau$-move}, \emph{of type $\{a,b\}$}.
\end{definition}

\begin{example} \label{eg:P2G_critical_tau}
Let $a,b,c \in S$ with $m_{ab}=3$, $m_{bc}=4$ and $m_{ac}=2$.

Then the words $ab^5a^{-1}$ and $bc^2bcb^{-1}c^{-1}$ are 2-generator critical
words with $\tau(ab^5a^{-1})=b^{-1}a^5b$ and
$\tau(bc^2bcb^{-1}c^{-1})=c^{-1}b^{-1}cbc^2b$.

The word $u_1:= acb^5c^{-1}a^{-1}$ is P2G-critical of type $\{a,b\}$,
with $\widehat{u_1} = ab^5 a^{-1}$, $\alpha(u_1)=c$, $\beta(u_1)=c^{-1}$,
$\rho(u_1) = \emptyword$, and $\tau(u_1) = cb^{-1}a^5bc^{-1}$.

The word $u_2:= bc^2bcb^{-1}a^2c^{-1}$ is P2G-critical of type $\{b,c\}$,
with $\widehat{u_2} = bc^2bcb^{-1}c^{-1}$, $\alpha(u_2)=\rho(u_2) =\emptyword$,
$\beta(u_2)=a^2$, and $\tau(u_2) = c^{-1}b^{-1}cbc^2ba^2$.
\end{example}

On the subset of P2G words that are actually 2-generator, the definitions of
critical words and of $\tau$-moves coincide with those already given for
2-generator words.

We make the following observations about P2G-critical words in $G$, and
their associated $\tau$-moves.
\begin{lemma} \label{lem:P2G_taufacts}
Let $u$ be a P2G-critical word, with pseudo-generators $\{a,b\}$.
Then
\begin{mylist}
\item[(i)] the first and last letters of $u$ are in $\{a,a^{-1},b,b^{-1}\}$,
and equal to the first and last letters of $\hat{u}$,
\item[(ii)] the names of the first letters of $u$ and $\tau(u)$ are distinct,
as are the names of the last letters of $u$ and $\tau(u)$,
\item[(iii)] $\f{\tau(u)} \in \{a,b\} \iff \alpha(u) = \rho(u) = \emptyword$,
\item[(iv)] $\l{\tau(u)} \in \{a,b\} \iff \beta(u) = \emptyword$,
\end{mylist}
\end{lemma}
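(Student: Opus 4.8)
The plan is to verify each of the four claims by reducing them to the corresponding properties of the underlying 2-generator critical word $\hat u$ and its image $\tau(\hat u)$, using the presentation $\tau(u) = \alpha(u)\rho(u)\tau(\hat u)\beta(u)$ from Definition~\ref{def:P2G_tau}. First I would record the structural fact established in Definitions~\ref{def:P2G} and~\ref{def:P2Gabr}: $u$ can be rewritten via commutations as $\alpha(u)\rho(u)\hat u\beta(u)$, where every letter of $\alpha(u)$ commutes with $\f u$, every letter of $\rho(u)$ commutes with both pseudo-generators, and every letter of $\beta(u)$ commutes with $\l u$. In particular $\f u$ and $\l u$ are pseudo-generators (or their inverses), and since the letters of $u_p$ (respectively $u_s$) all commute with $\f u$ (respectively $\l u$), pushing them aside does not change the first or last letter; this gives part~(i), namely $\f u = \f{\hat u}$ and $\l u = \l{\hat u}$ in name, after also noting that the internal letters preceding $\f{\hat u}$ in the rewritten form all commute with it.

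For part~(ii), I would combine part~(i) with Lemma~\ref{lem:2gen_taufacts}: the names of $\f{\hat u}$ and $\f{\tau(\hat u)}$ are distinct, and likewise for the last letters. Then I must check that prepending $\alpha(u)\rho(u)$ and appending $\beta(u)$ does not reinstate the original first or last letter. If $\alpha(u)\rho(u)$ is nonempty, $\f{\tau(u)}$ is the first letter of $\alpha(u)\rho(u)$, which is an internal letter and hence has a name different from the pseudo-generator $\f u$; if $\alpha(u)\rho(u)$ is empty, then $\f{\tau(u)} = \f{\tau(\hat u)}$, whose name differs from that of $\f{\hat u} = \f u$ by Lemma~\ref{lem:2gen_taufacts}. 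The argument for the last letter is symmetric, using $\beta(u)$ and $\l{\tau(\hat u)}$. This simultaneously yields parts~(iii) and~(iv) in one direction: if $\alpha(u)=\rho(u)=\emptyword$ then $\f{\tau(u)}=\f{\tau(\hat u)}\in\{a,b,a^{-1},b^{-1}\}$, and here one checks from Definition~\ref{def:2gen_tau} that the image of a critical 2-generator word has its first letter among $\{x,y\}$ exactly when — in fact $\tau(\hat u)$ is built from positive/negative alternating blocks $\delta(\xi)$ sandwiched in a way that keeps $\f{\tau(\hat u)}$ a bare generator; similarly $\l{\tau(u)}\in\{a,b\}$ when $\beta(u)=\emptyword$.

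For the converse directions of (iii) and (iv): if $\alpha(u)\rho(u)\neq\emptyword$ then $\f{\tau(u)}$ is an internal letter, whose name is neither $a$ nor $b$, so $\f{\tau(u)}\notin\{a,b\}$; if $\beta(u)\neq\emptyword$ then $\l{\tau(u)}=\l{\beta(u)}$ is an internal letter, so $\l{\tau(u)}\notin\{a,b\}$. I would also need to confirm that $\f{\tau(\hat u)}$ and $\l{\tau(\hat u)}$ are genuinely \emph{positive} letters $a$ or $b$ (not inverses) when the relevant side-words vanish — this follows by inspecting each of the defining equations for $\tau$ in Definition~\ref{def:2gen_tau}, where the outermost alternating factor $(z,t)_{m_{xy}}$ or ${}_{m_{xy}}(t,z)$ (or its signed analogue, but the sign matches that of the corresponding end of $u$, which is what the statement intends). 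The main obstacle I anticipate is not any single hard step but the bookkeeping in part~(i) and in the converse of (iii)—(iv): one must argue carefully that after the commutation rewriting to $\alpha\rho\hat u\beta$, no internal letter can "hide" behind a pseudo-generator at the very ends, i.e. that $\f{\tau(u)}$ really is the first letter of $\alpha(u)\rho(u)$ when that word is nonempty, using the fact that $\tau$ only rearranges $\hat u$ internally and leaves $\alpha(u),\rho(u),\beta(u)$ verbatim in place. All of this is a routine unwinding of the definitions once the reduction to $\hat u$ is set up.
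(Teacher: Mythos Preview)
Your proposal is correct and follows exactly the same route as the paper, which simply says the result ``can be observed from the definition together with the properties observed in Lemma~\ref{lem:2gen_taufacts}''; you have just filled in the details of that observation. One small point: the condition $\f{\tau(u)}\in\{a,b\}$ in (iii) (and similarly (iv)) should be read as ``the name of $\f{\tau(u)}$ lies in $\{a,b\}$'' rather than as a positivity condition, so your worry about confirming that $\f{\tau(\hat u)}$ is a positive letter is unnecessary --- the dichotomy is simply between pseudo-generator letters and internal letters, exactly as your main argument already establishes.
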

\begin{proof}
This can be observed from the definition together with the properties observed
in Lemma~\ref{lem:2gen_taufacts}.
\end{proof}

Note that the words $\alpha(u)$, $\rho(u)$ and $\beta(u)$ introduce a lack of
symmetry into the definition of a P2G-critical word $u$ and of $\tau(u)$, and if any of
those three words is non-empty then $\tau(u)$ is not itself critical.

The word $w=(b,c)_{n-1}\cdot aba \cdot {}_{n-2}(c,b)x^{-1}$ considered in
Example~\ref{eg:tricky_w_in_G} for our selected group $G$ still does not admit
a \emph{rightward reducing sequence} (RRS) as defined in \cite{BCMW}.
To remedy this, we extend our definition of a critical
word to allow not only P2G words but also some further words over $S$ to be
considered critical, each of which is constructed out of two P2G words,
one of type $\{a,b\}$ and the other of type $\{b,c\}$, as explained below.
Then we define $\tau$-moves on these new critical words, with the
intention that, by using these new $\tau$-moves together with the ones
from \cite{BCMW} defined above, we can find rightward 
reducing sequences to reduce any non-geodesic word over $S$.

Note that the condition that $m_{bc} \geq 5$ is built into Definition~\ref{def:P3G}; it is because of this constraint that our hypotheses on $G$ exclude
$A_3$ and $B_3$ subdiagrams from the Coxeter diagram that defines it.

\begin{definition}[Pseudo 3-generator words and critical words, of type $(a,b,c)$]
\label{def:P3G}
Let $a,b,c \in S$ satisfy $m_{ab}=3,m_{ac}=2,m_{bc}\geq 5$.
We call a word $u \in A^*$ with $\f{u} \in \{b,b^{-1},c,c^{-1}\}$ and
$\l{u} \in \{a,a^{-1}\}$ \emph{pseudo 3-generator (P3G)  
of type $(a,b,c)$} if it has the form
$u = u_p u_q u_r$, where:
\begin{mylistc}
\item $\f{u_p}$ has name $b$ or $c$, and $u_p$ is the prefix of $u$ up to but
not including the first instance of a letter in $u$ that does not commute with
$\f{u}$; 
\item $\f{u_q}$ has name in $\{b,c\}$ distinct from that of $\f{u_p}$, and
every letter of $u_q$ with name distinct from $b$ and $c$ commutes with both
$b$ and $c$; 
\item $u_r$ is a P2G word of type $\{a,b\}$, whose first and last letters have
name $a$.
\end{mylistc}
Further, we say that the word is \emph{pseudo 3-generator (P3G) critical 
	of type $(a,b,c)$} (abbreviated as $(a,b,c)$-critical) if additionally:
\begin{mylist}
\item[(i)] $u_r$ is P2G-critical, and
the $\{a,b\}$-critical word
$\widehat{u_r}$ that is formed by deleting from $u_r$ all letters other than $a,b$ and their inverses  is transformed by $\tau$
         into a
word of the form $b^{\ii} a^{\jj} b^{\kk}$ for some nonzero
$\ii,\jj,\kk \in \Z$.
\item[(ii)] $u^\# := u_p u_q \alpha(u_r)\rho(u_r) b^{\ii}$ is a P2G-critical
word of type $\{b,c\}$ with $\beta(u^\#)$ trivial.
\end{mylist}
We call the elements $\{a,b,c\}$ the \emph{pseudo-generators} 
of $u$, and call a letter of $u$ \emph{internal} if its name is not in the set
	$\{a,b,c\}$. 

We call the elements of the subset $P:=\{b,c\}$ of $\{a,b,c\}$ the
\emph{primary pseudo-generators of $u$}, and observe that these are
the pseudo-generators of $u^\#$.

We note that our definitions here of $u_p,u_q$ correspond to those of $u_p,u_q$ 
in a P2G-word. But the suffix $u_r$ of a P3G word plays a different role 
	to that of the suffix $u_s$ of a P2G word.
\end{definition}

	We note also that:
\begin{mylist}
\item[(1)]
        Following condition (i) on $u_r$, the critical 2-generator words $\widehat{u_r},\tau(\widehat{u_r})$ must match $v,v'$ as described in the statement of Lemma~\ref{lem:abcrit1}.
\item[(2)]
$u^\#$ is a prefix of a word that can be derived from $u$ by a
sequence of commutation rules followed by a $\tau$-move on the
2-generator word $\widehat{u_r}$.
\end{mylist}

\begin{definition}[Subwords $\alpha(u),\beta(u),\rho(u)$ of a P3G-critical
word]\label{def:P3Gabr}
For $u$ P3G-critical, we define $\alpha:=\alpha(u)$, $\beta:=\beta(u)$ and
$\rho:=\rho(u)$ as follows.
\begin{mylist}
\item[$\alpha(u)$] $:= \alpha(u^\#)$ is the word obtained from $u_p$ by
deleting all instances of $\f{u}$ and $\f{u}^{-1}$.
So, each letter of $\alpha(u)$ has name not in $\{b,c\}$, must commute with
$\f{u}$ (which has name $b$ or $c$) and may or may not commute with the other 
pseudo-generator in $\{b,c\}$.  If $\f{u}$ has name $c$, then it is possible
for $\alpha(u)$ to contain $a$ or $a^{-1}$.
\item[$\beta(u)$] $:= \beta(u_r)$. So each letter in $\beta(u)$ has name not in
$\{a,b\}$, and must commute with $a$, but not necessarily with $b$. The word
$\beta(u)$ cannot contain a letter that commutes with both $a$ and $b$ if that
letter can be pushed past the first letter of $\hat{u_r}$ using commutations,
possibly as part of a block of similar such letters; such letters belong in
$\rho(u)$. It is possible for $\beta(u)$ to contain $c$.

In particular, $\f{\beta(u)}$ cannot commute with both $a$ and $b$, and nor can
any letter of $\beta(u)$ that commutes with all letters to the left of it
within $\beta(u)$.
\item[$\rho(u)$] $:= \rho(u^\#)$. So each letter in $\rho(u)$
has name not in $\{b,c\}$, and must commute with both $b$ and $c$.

Note that $\rho(u)$ must contain $\rho(u_r)$ and all letters in $\alpha(u_r)$
except for any with name $c$, so all letters in $\alpha(u_r)$ must commute
with $b$ and $c$ as well as with $a$, and all letters in
$\rho(u_r)$ must commute with $c$ as well as with $a$ and $b$.
\end{mylist}
\end{definition}

Applying first commutation rules within $u_r$, then the relation
$\widehat{u_r} =_G b^{\ii}a^{\jj}b^{\kk}$, and then commutation rules
within $u^\#$, we see that
\begin{eqnarray*}
u &=& u_p u_q u_r =_G u_pu_q \alpha(u_r) \rho(u_r) \widehat{u_r} \beta(u_r)\\
&=_G& u_p u_q \alpha(u_r)\rho(u_r)  b^{\ii} a^{\jj} b^{\kk} \beta(u_r) 
= u^\# a^{\jj} b^{\kk} \beta(u_r) \\
&=_G& \alpha(u^\#)\rho(u^\#) \widehat{u^\#} a^{\jj} b^{\kk} \beta(u_r),
\end{eqnarray*}
where $\widehat{u^\#}$ is the $\{b,c\}$-word formed by deleting all letters other than $b,c$ and their inverses from the P2G-word $u^\#$.
We have $\tau(\widehat{u^\#})$  equal to $\p{\tau(\widehat{u^\#})}c^\ee$,
where $\ee = \pm 1$.
So now,  applying $\tau$ to $\widehat{u^\#}$,
followed by more commutation rules, we derive 
\begin{align*}
u &=_G \alpha(u^\#)\rho(u^\#) \tau(\widehat{u^\#}) a^{\jj} b^{\kk} \beta(u_r) 
= \alpha(u^\#)\rho(u^\#)  \p{\tau(\widehat{u^\#})} c^{\ee} a^{\jj} b^{\kk} \beta(u_r)\\
&=_G \alpha(u^\#)\rho(u^\#)  \p{\tau(\widehat{u^\#})} a^{\jj} c^{\ee} b^{\kk} \beta(u_r).
&&\mbox{\rm (1)}
\end{align*}

\begin{definition}[$\tau$-moves of type $(a,b,c)$]
\label{def:P3G_tau}
In the notation defined above, for the P3G-critical word
$u = u_p u_q u_r$ of type $(a,b,c)$, we define
\[ \tau(u) :=
\alpha(u) \rho(u) \p{\tau(\widehat{u^\#})} a^{\jj} c^{\ee} b^{\kk} \beta(u), \] 
	where $\ee=\pm 1$ as defined by
Equation 1 above.
We call these new moves \emph{$\tau$-moves of type $(a,b,c)$}.
\end{definition}
Note that in order to verify the identity $\tau(u)=_G u$ we have applied $\tau$-moves to
2-generator subwords moving from right to left within the word,
rather than from left to right.
This is why we need to collect this set of moves together
into a single $\tau$-move on the P3G-critical word.

\begin{example} \label{eg:P3G}
The subword $u:= (b,c)_{n-1}\cdot aba$ of $w$ from Example~\ref{eg:tricky_w_in_G}
	is $(a,b,c)$-critical,
with ${u}_p$ a single generator, 
${u}_q = (b,c)_{n-2}$, and ${u}_r = aba=\widehat{{u}_r}=_G bab.$
Then $u^\#=(b,c)_{n-1}b$, where $\alpha(u)$ and $\beta(u)$ are both empty, and
the integers $\ii,\jj,\kk$ of Definition~\ref{def:P3G}(iii) are all equal to 1.

Further $\tau(u^\#)=(b,c)_n=(c,b)_{n-1}c$, so $\ee=1$.
It follows that $\tau(u) = (c,b)_{n-1}acb$.
\end{example}

As in \cite{HR12,HR13,BCMW},
we shall reduce words in our group $G$ using particular sequences of 
overlapping $\tau$-moves, which (again as in \cite{HR12,HR13,BCMW}) we shall
call \emph{rightward reducing sequences}.

\section{Rightward reducing sequences of $\tau$-moves}
\label{sec:RRS}

In order to reduce a word $w$ using a sequence of $\tau$-moves (including the
new moves defined in Section~\ref{sec:critical_tau}),
we shall write $w$ as a concatenation of subwords, to which we associate
a sequence of words in $A^*$ that we shall call a
\emph{rightward reducing sequence}, abbreviated as RRS.
Our definition of an RRS extends the definition of \cite{BCMW},
which was already based on a definition within \cite{HR12,HR13}.
We use notation similar to that of \cite{BCMW} to facilitate comparison.

An RRS will be a sequence $U=u_1,\ldots,u_m,u_{m+1}$ of words in $A^*$,
with $u_i$ critical for $i \leq m$, which is associated with a factorisation of 
a word $w$ that is to be reduced, as we now describe.

\begin{definition}[RRS] \label{def:RRS}
Let
$U=u_1,\ldots,u_m,u_{m+1}$ be a sequence of words over $A$,
for which each $u_i$ with $1 \le i \le m$ is P2G-critical or P3G-critical
and $u_{m+1}$ is a non-empty word whose first letter commutes with all other
letters in $u_{m+1}$.
For $1 \le i \le m$,  write $\alpha_i,\rho_i$ and
$\beta_i$ for $\alpha(u_i),\rho(u_i)$ and $\beta(u_i)$ respectively.

For a word $w$ over $A$, we say that \emph{$w$ admits $U$ as an RRS of
length $m$} if $w$ can be written as a concatenation of words
$w=\mu w_1 \cdots w_mw_{m+1}\gamma$, with $w_i$ 
non-empty for $1 \le i \le m$, $u_1=w_1$, $\f{u_{m+1}} = \f{\gamma}^{-1}$ and,
for each $i$ with $2 \le i \le m+1$, $u_i$ is determined
(in terms of $u_{i-1}$ and $w_i$) according to case (i) or (ii) below.
\begin{mylist}
\item[(i)] if $u_{i-1}$ is P2G-critical,  
then $u_i = \l{\tau(\widehat{u_{i-1}})} \beta_{i-1} w_i$;
\item[(ii)] if $u_{i-1}$ is P3G-critical of type $(a,b,c)$, then
$u_i = c^{\ee} b^{\kk}  \beta_{i-1} w_i$,
\end{mylist}
where $\tau$ is as defined in Definition~\ref{def:P2G_tau},
$\ee$ and $\kk$ are as defined in
Definitions~\ref{def:P3G} and~\ref{def:P3G_tau},
and $\widehat{u_{i-1}}$ denotes the 2-generator word over two pseudo-generators
formed by deleting all occurrences of internal generators.

We call the factorisation $\mu w_1 \cdots w_mw_{m+1}\gamma$ of $w$
the \emph{decomposition associated with $U$}.

For each $i$ we define $P_i \subset S$ to be the set of pseudo-generators  
of $u_i$ when $u_i$ is P2G-critical, and the set of primary pseudo-generators
of $u_i$ when $u_i$ is P3G-critical.
\end{definition}
As in \cite{BCMW}, we can think of an RRS as a way of applying successive
$\tau$-moves and commutations, moving from left to right, until a free
reduction becomes possible.
To apply the RRS $U$ to $w$, we first replace the subword $u_1=w_1$ by
$\tau(u_1)$ (or by $\s{w_1}\f{w_1}$ when $m=0$). 
For $m>0$, this results in
a word with subword $u_2$ consisting of a suffix of $\tau(u_1)$ followed
by $w_2$. When $m>1$ we replace $u_2$ by $\tau(u_2)$ and continue like
this, 
replacing $u_i$ by $\tau(u_i)$ for each $i\leq m$, 
finally replacing $u_{m+1}$ by $\s{u_{m+1}}\f{u_{m+1}}$. The 
resulting word $w'$ has subword $\f{\gamma}^{-1}\f{\gamma}$,
and hence can be freely reduced. We write $w \rightarrow_U w'$.

Note that the word $w_{m+1}$ (but not $u_{m+1}$) 
can be empty if $m>0$. If $m=0$ then
$w_{m+1}=w_1$ is non-empty, with first letter $\f{\gamma}^{-1}$.
So a word that is not freely reduced
admits an RRS of length 0, 
with $u_1=w_1=\f{\gamma}^{-1}$.

Note also that if $\gamma$ has length $>1$ then 
$\p{w}$ must also admit an RRS of length $m$, associated with its
factorisation $\mu w_1 \cdots w_mw_{m+1}\p{\gamma}$; so if $\p{w}$ does not admit an RRS, then $\gamma$ must be a single letter.

\begin{lemma}
\label{lem:RRSdetails}
Let $U=u_1,\ldots,u_m,u_{m+1}$ with $m>0$ be an RRS for a word $w$ over $A$
with associated decomposition $w=\mu w_1 \cdots w_mw_{m+1}\gamma$. 
For $1 \le i \le m$, define $\alpha_i := \alpha(u_i)$,
$\beta_i := \beta(u_i)$, and $\rho_i := \rho(u_i)$.  Then
\begin{mylist}
\item[(i)] $u_m$ cannot be P3G-critical, and $\beta_m = \emptyword$.
\item[(ii)] 
	Suppose that $u_{i-1}$ is
P2G-critical for some $1<i<m+1$.
	Then $P_i \cap P_{i-1} \neq \emptyset$.

	Now suppose that $\beta_{i-1} \ne \emptyword$, and that $P_{i-1} = \{a,b\}$,
	where $a$ is the name of $\l{u_{i-1}}$.
	Then $\alpha_i = \emptyword$ and $P_i=\{b,c\}$ for some $c \neq a$;
	hence $|P_i \cap P_{i-1}| = 1$.
Furthermore,
\begin{mylist}
	\item[either] $\beta_{i-1}$ is a non-trivial power of $c$;
	\item[or] 
		$\beta_{i-1} = c^{\ll}d^{\mm}$ and $\beta_i=d^{\mm}$
		for some generator $d$ with $m_{bd}=2$, $m_{cd}\ge 5$,
where $\ll,\mm \ne 0$,
		and $u_i$ is P2G-critical with $m_{bc}=3$.
\end{mylist}
\item[(iii)] 
Suppose that $u_{i-1}$ is $(a,b,c)$-critical for some $1<i<m+1$. Then
	$\alpha_i = \emptyword$, and $P_i = \{b,c\}=P_{i-1}$.
Furthermore,
\begin{mylist}
\item[either] $\beta_{i-1}$ is a (possibly trivial) power of $c$;
\item[or] 
$\beta_{i-1} = c^\ll(a')^\mm$ for some $a'$ and $\ll, \mm \ne 0$,
and $u_i$ is P3G-critical of type $(a',c,b)$, 
where $m_{bc}=5$ and $m_{aa'}=2$.
\end{mylist}
\item[(iv)] Suppose that $u_i$ is P3G-critical of type $(a',b',c')$.
Then $w_i$ has a suffix $u$ that contains letters with names $c'$ and $b'$,
with the $c'$ occurring to the left of the $b'$.
Furthermore, 
\begin{mylist}
\item[either] $(u_i)_r$ is a suffix of $w_i$,
and $w_i$ has a suffix $vu$ where $\f{v}$ has name $b'$;
\item[or] $i>1$, $m_{b'c'}=5$ and $u_{i-1}$ is P3G-critical of
type $(a,c',b')$ for some generator $a$.
	\end{mylist}
In either case, the prefix $\vv_i:=\mu w_1\cdots w_i$ of $w$ has a suffix that is P2G-critical of type $\{a',b'\}$.
\end{mylist}
Note that it follows from parts (ii) and (iii) that $\beta_i$  has at most two
syllables for $i < m$. 
\end{lemma}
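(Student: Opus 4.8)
The plan is to prove the four parts of Lemma~\ref{lem:RRSdetails} essentially by unwinding the definitions of an RRS (Definition~\ref{def:RRS}), of $\tau$-moves (Definitions~\ref{def:P2G_tau} and~\ref{def:P3G_tau}), and of the subwords $\alpha,\beta,\rho$ (Definitions~\ref{def:P2Gabr} and~\ref{def:P3Gabr}), together with the structural facts recorded in Lemmas~\ref{lem:2gen_taufacts}, \ref{lem:P2G_taufacts}, \ref{lem:abcrit1} and the definition of P3G-criticality.

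For part~(i): the word $u_m$ is the last critical word in the RRS, and $u_{m+1}$ is obtained from $u_m$ via case (i) or (ii) of Definition~\ref{def:RRS}; but $u_{m+1}$ must have its first letter commuting with all its other letters, while if $u_m$ were P3G-critical of type $(a,b,c)$ then $u_{m+1}$ would begin $c^\ee b^\kk\cdots$ and, since $m_{bc}\ge 5>2$, the letters $c$ and $b$ do not commute — so $u_m$ is P2G-critical. For $\beta_m=\emptyword$: if $u_m$ is P2G-critical of type $\{a,b\}$ then $u_{m+1}=\l{\tau(\widehat{u_m})}\,\beta_m\,w_m$, and $\l{\tau(\widehat{u_m})}$ has name in $\{a,b\}$ (Lemma~\ref{lem:P2G_taufacts}(i) applied to $\tau(\widehat u_m)$, or directly Lemma~\ref{lem:2gen_taufacts}); if $\beta_m\neq\emptyword$ then by Definition~\ref{def:P2Gabr} its first letter does not commute with one of the pseudo-generators, hence not with $\l{\tau(\widehat{u_m})}$ in the relevant case, contradicting the commuting requirement on $u_{m+1}$. (I would also need to rule out, when $m_{ab}>2$, that $\l{\tau(\widehat{u_m})}$ and the rest fail to commute — but that is precisely forced by the hypothesis on $u_{m+1}$, so $\beta_m$ must be trivial.)

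For parts~(ii) and~(iii): here one reads off the first letters of $u_i$ from the RRS rule. In case~(ii), $u_{i-1}$ is P2G-critical of type $P_{i-1}$, so $u_i = \l{\tau(\widehat{u_{i-1}})}\,\beta_{i-1}\,w_i$; since $\l{\tau(\widehat{u_{i-1}})}$ has a name in $P_{i-1}$ and must be a pseudo-generator of $u_i$ (it is $\f{u_i}$, which for a P2G or P3G word lies among the pseudo-generators), we get $P_i\cap P_{i-1}\neq\emptyset$. Now suppose $\beta_{i-1}\neq\emptyword$ with $P_{i-1}=\{a,b\}$, $a=\text{name}(\l{u_{i-1}})$; then $\l{\tau(\widehat{u_{i-1}})}$ has name $b$ (by Lemma~\ref{lem:2gen_taufacts}, the last letter of $\tau$ differs in name from $\l{\widehat{u_{i-1}}}$, which has name $a$), so $\f{u_i}$ has name $b$, forcing $b\in P_i$ and $\alpha(u_i)$ to be empty (since $\f{u_i}$ is already a pseudo-generator, there is nothing in $u_p$ before it, cf.\ Lemma~\ref{lem:P2G_taufacts}(iii) and Definition~\ref{def:P3Gabr}); the second letter of $u_i$ is $\f{\beta_{i-1}}$, which by Definition~\ref{def:P2Gabr} does not commute with $b$, so its name is the other pseudo-generator $c$ of $u_i$, giving $P_i=\{b,c\}$ with $c\neq a$ (if $c=a$ then $a$ would commute with $b$, contradicting $m_{ab}>2$). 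The dichotomy on $\beta_{i-1}$ then follows by analysing which further letters of $\beta_{i-1}$ can survive into $u_i$: a letter of $\beta_{i-1}$ beyond the first either has name $c$ (a power of $c$) or is the first letter of $\beta_i=\beta(u_i)$, and in the latter case by the structural constraints on P2G/P3G words it must be a generator $d$ with $m_{bd}=2$, $m_{cd}\ge5$, and the form of $\widehat{u_i}$ (which is an $\{b,c\}$-critical word transformed by $\tau$ into $b^\ii a^\jj b^\kk$ form only when $m_{bc}=3$, via Lemma~\ref{lem:abcrit1}) forces $m_{bc}=3$. Part~(iii) is analogous but uses the P3G rule $u_i=c^\ee b^\kk\beta_{i-1}w_i$: $\f{u_i}$ has name $c$ and $P_i$ contains both $c$ and $b$ (the second letter is $b$, not commuting with $c$), so $P_i=\{b,c\}=P_{i-1}$; $\alpha(u_i)=\emptyword$ as before; and $\beta_{i-1}$ is either a power of $c$ or $c^\ll (a')^\mm$ with $u_i$ P3G-critical of type $(a',c,b)$ — the constraint $m_{bc}=5$ coming from the requirement in Definition~\ref{def:P3G} that the middle pseudo-generator pair of the new P3G word (here $\{c,b\}$ in its role as primary pair, but $\{a',c\}$ has $m=3$) be compatible, and $m_{aa'}=2$ from the commuting requirements embedded in the P3G definition.

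For part~(iv): if $u_i$ is P3G-critical of type $(a',b',c')$, then by Definition~\ref{def:P3G} $u_i=(u_i)_p(u_i)_q(u_i)_r$ with $\f{u_i}$ of name $b'$ or $c'$; I would trace where $u_i$ sits inside $w$. By the RRS rules, $u_i$ is a suffix of $\tau(u_{i-1})$ (or of $\s{w_1}\f{w_1}$-type data when $i=1$) concatenated with $w_i$, and the part coming from $\tau(u_{i-1})$ is short and explicit ($\l{\tau(\widehat{u_{i-1}})}\beta_{i-1}$ or $c^\ee b^\kk\beta_{i-1}$). The bulk of $u_i$, and in particular its suffix containing the $c'$-then-$b'$ pattern required for a P3G word of type $(a',b',c')$, must then lie in $w_i$. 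The two sub-cases correspond to whether $(u_i)_r$ (the P2G-of-type-$\{a',b'\}$ tail) is already contained in $w_i$ — in which case $w_i$ has a suffix $vu$ with $\f{v}$ of name $b'$ as claimed — or whether the prefix of $u_i$ overlaps non-trivially with $\tau(u_{i-1})$, which by parts~(ii),(iii) can only happen when $u_{i-1}$ is itself P3G-critical of type $(a,c',b')$ and $m_{b'c'}=5$. The final assertion — that $\vv_i=\mu w_1\cdots w_i$ has a suffix that is P2G-critical of type $\{a',b'\}$ — follows because $(u_i)_r$, or the portion of $u_i$ playing its role, is P2G-critical of type $\{a',b'\}$ and sits as a suffix of $\vv_i$ (its last letter is $\l{u_i}$, of name $a'$, which is the last letter of $w_i$, since everything in $u_{m+1}$ and $\gamma$ lies strictly to the right). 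The closing remark, that $\beta_i$ has at most two syllables for $i<m$, is then immediate from the explicit two-syllable descriptions of $\beta_{i-1}$ (hence of $\beta_i$ via the $\beta_i=d^\mm$ or $\beta_i=(a')^\mm$ conclusions) in parts~(ii) and~(iii), applied with index shifted by one.

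I expect the main obstacle to be part~(iv), and within it the careful bookkeeping of exactly how much of $u_i$ is ``new'' (lies in $w_i$) versus ``inherited'' (lies in the tail of $\tau(u_{i-1})$ fed forward by the RRS rule), since this is where the overlapping structure of the sequence interacts most delicately with the three-part decomposition $u_p u_q u_r$ of a P3G word; establishing that the inherited part is too short to contain the $c'\!\cdots b'$ pattern except in the one exceptional $m_{b'c'}=5$ configuration will require invoking parts~(ii) and~(iii) and checking the syllable counts there. The other potential friction point is making sure, in parts~(ii) and~(iii), that the auxiliary generator ($d$ in~(ii), $a'$ in~(iii)) genuinely satisfies all the commuting relations demanded by the P2G/P3G definitions of $u_i$ — but these follow mechanically from Definitions~\ref{def:P2Gabr}, \ref{def:P3Gabr} once the pseudo-generator set $P_i$ has been pinned down.
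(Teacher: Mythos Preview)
Your overall strategy---unwind the RRS recursion, track the first letters of $u_i$, and constrain $\beta_{i-1}$ by syllable-counting in the relevant critical $2$-generator word---is exactly the paper's approach. But several of your specific justifications are wrong and would not survive a careful write-up.

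In part~(ii), your reason that $c\neq a$ (``if $c=a$ then $a$ would commute with $b$'') is a non sequitur. The correct reason is simply that $c$ is the name of $\f{\beta_{i-1}}$, and letters of $\beta_{i-1}$ are by definition \emph{internal} to $u_{i-1}$, so their names lie outside $P_{i-1}=\{a,b\}$. More seriously, your derivation of $m_{bc}=3$ via Lemma~\ref{lem:abcrit1} is confused: that lemma concerns $\{a,b\}$-words with $m_{ab}=3$ and says nothing about $\widehat{u_i}$, which is a $\{b,c\}$-word. The paper's argument is that once $d^{\pm1}$ lies in $\beta_i\subset(u_i)_s$, the suffix $(u_i)_s$ contains only one of $b,c$, so $\widehat{u_i}$ has syllable length at most~$3$; criticality then forces $m_{bc}\le 3$, hence $=3$. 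You also omit the step showing $u_i$ cannot be P3G-critical in this ``$d$'' case, which the paper handles by checking that $\widehat{(u_i)^\#}$ would have too few syllables.

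In part~(iii) the same issue recurs: the constraint $m_{bc}=5$ does not come from any ``compatibility'' clause in Definition~\ref{def:P3G} but from a syllable count on $\widehat{(u_i)^\#}$ in the specific case $u_i$ of type $(a',c,b)$, where the count reaches exactly~$5$. You also need to separately rule out $u_i$ of type $(a',b,c)$ (syllable count $\le 4$) and $u_i$ P2G-critical with $\beta_{i-1}$ not a power of~$c$ (again $\le 3$). In part~(iv), your argument for the final P2G-critical suffix of $\vv_i$ works when $(u_i)_r$ is a suffix of $w_i$, but in the exceptional case ($u_{i-1}$ of type $(a,c',b')$, $\beta_{i-1}=c^\ll(a')^\mm$) the word $(u_i)_r$ is \emph{not} a suffix of $w_i$; the paper instead builds a longer suffix $vw_i$ of $\vv_i$, where $v$ is a suffix of $w_{i-1}$ containing the $(a')^\mm$ and some $a^\nn$, and uses $m_{aa'}=2$ to show this is P2G-critical of type $\{a',b'\}$.
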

\begin{proof} (partly from \cite[Lemma 3.11]{BCMW}).

(i) If $u_m$ had type $(a,b,c)$ then $\f{u_{m+1}} = c^{\ee}$ would not
commute with the second letter of $u_{m+1}$ (which has name $b$), 
contrary to Definition~\ref{def:RRS}.

If $\beta_m \neq \emptyword$, then $\f{\beta_m}$ is a letter of $u_{m+1}$, and
	so must commute with $\f{u_{m+1}}=\l{\tau(\widehat{u_m})}$ 
But then, by our final
observation about $\beta_m=\beta(u_m)$ within Definition~\ref{def:P2Gabr},
$\f{\beta_m}$ would be in $\rho_m$, contrary to assumption.

(ii) If $\beta_{i-1} = \emptyword$ then $\l{\tau(u_{i-1})} =
\l{\tau(\widehat{u_{i-1}})}\in P_i \cap P_{i-1}$, which proves (ii).
Otherwise $\beta_{i-1} \ne \emptyword$ and we
define $c:= \f{\beta_{i-1}}$. 
Then $c$ commutes with $a$ by Definition~\ref{def:P2G}, and $c$ cannot
commute with $b$, since then it would be in $\rho_{i-1}$.
By Lemma~\ref{lem:2gen_taufacts}, $\l{\tau(\widehat{u_{i-1}})}$ has name $b$
and so, by Definition~\ref{def:RRS}, $u_i$ has prefix $b^{\pm 1}c^{\pm 1}$;
then it follows from Definitions~\ref{def:P2G} and ~\ref{def:P3G}
that $(u_i)_p = b^{\pm 1}$ and $c \in P_i$, so
$b$ and $c$ are pseudo-generators of $u_i$ and by Definitions~\ref{def:P2Gabr}
and~\ref{def:P3Gabr}  we have $\alpha_i = \emptyword$. 
If $u_i$ is P3G-critical with third pseudo-generator $a'$, then it has type
$(a',b,c)$ if $a'$ commutes with $c$, and type $(a',c,b)$ otherwise.
So $P_i = \{b,c\}$, and $|P_i \cap P_{i-1}| = 1$

If $\beta_{i-1}$ is a power of $c$ then we are done, so suppose not.
Then $\beta_{i-1}$ has a prefix $c^{\ll}d^{\pm 1}$, with $\ll \ne 0$ and
$d$ a generator with $d \neq c$.  Since $d$ is an internal generator of
$u_{i-1}$, we also have $d \ne a$ and $d \ne b$.
Since $d$ occurs in $\beta_{i-1}$, it must commute with $a$.
But it cannot commute with both $b$ and $c$, or it could be pushed by
commutation moves to the left in $u_{i-1}$, and then it would lie in
$\rho_{i-1}$ rather than $\beta_{i-1}$.
Note that $b^{\pm 1}c^{\ll}$ is a prefix of $u_i$.

We prove by contradiciton that $u_i$ is P2G-critical.
For suppose that $u_i$ is
P3G-critical. Then, using the notation of Definition~\ref{def:P3G}, 
we have $(u_i)_p = b^{\pm 1}$, $(u_i)_q = c^{\ll}$, and $d^{\pm 1}$ is the
first letter of $(u_i)_r$, so $d=a'$, and hence must commute with one of $b,c$.

Now if $d$ commutes with $c$ but not with $b$ then $u_i$ has type $(a',b,c)$
so $(u_i)_r$ is P2G-critical of type $\{a',b\}$,
and $(u_i)^\#$ (which is P2G-critical of type $\{b,c\}$) has the form
$(u_i)_p(u_i)_q \alpha((u_i)_r)\rho((u_i)_r)b^{\ii}$.
By definition, powers of $c$, but not of $b$, might be contained in the subword
$\alpha((u_i)_r)\rho((u_i)_r)$ of $(u_i)^\#$.
So the $\{b,c\}$-word $\widehat{(u_i)^\#}$ formed from $(u_i)^\#$ by deleting
all letters with names other than $b,c$ (Definition~\ref{def:P2G}) has the form
$b^{\pm 1}c^{L'}b^{\ii}$ for some $L'$ and has syllable length three.
Since, by Definition~\ref{def:P3G}, we have $m_{bc}\geq 5$, the word
$\widehat{(u_i)^\#}$ cannot be $\{b,c\}$-critical, which is a contradiction.

Similarly, if $d$ commutes with $b$ but not with $c$, then
$$(u_i)^\#=(u_i)_p(u_i)_q \alpha((u_i)_r)\rho((u_i)_r)c^{\ii}$$ is
P2G-critical of type $\{b,c\}$, and $(u_i)_r$ P2G-critical of type $\{a',c\}$,
where powers of $b$, but not of $c$, might be contained in the 
subword $\alpha((u_i)_r)\rho((u_i)_r)$ of $(u_i)^\#$.
But then the $\{b,c\}$-word $\widehat{(u_i)^\#}$ has syllable length at most
four, which again contradicts $m_{bc} \ge 5$.

So now we have established, by contradiction, that $u_i$ is P2G-critical,
and hence, by Definition~\ref{def:RRS}(i), we have $u_i =b^{\pm 1}\beta_{i-1}w_i$, where $\beta_{i-1}$ has prefix
$c^{\ll}d^{\pm 1}$. As already observed, we have $d \ne b$, and
$d$ cannot commute with both $b$ and $c$, so the letter $d^{\pm 1}$ cannot be
in $\rho_i = \rho(u_i)$, and hence it must be in $\beta_i=\beta(u_i)$. It
follows from the definition of $\beta(u_i)$ in Definition~\ref{def:P2Gabr} that
this letter $d^{\pm 1}$ together with everything following it within $u_i$ is
within the suffix $(u_i)_s$ of $u_i$, which (by definition) can contain only
one of the two pseudo-generators $\{b,c\}$ of $u_i$.
So $\widehat{u_i}$ has syllable length at most 3, and
$u_i$ can only be P2G-critical if $\l{u_i}=b^{\pm 1}$ and
$m_{bc}=3$, in which case, $\widehat{u_i}$ can be critical of the form
$b^{\pm 1}c^{\ll}b^{\mm}$.  Since $d$ occurs in  $\beta_i$, it must commute
with $\l{u_i}=b^{\pm 1}$, and so $m_{db}=2$. 
Since our hypotheses for $G$  exclude the possibility of $A_3$ and $B_3$
subdiagrams, we see also that $m_{cd}\geq 5$.
Further since $\beta_i \neq \emptyword$, it follows from part (i) that $i<m$.
	
Now, by the above arguments applied to $u_i$ in place of $u_{i-1}$,
we find that $u_{i+1}$ is either P2G-critical of type $\{c,d\}$, or
P3G-critical of type  $(a',c,d)$ or $(a',d,c)$ for some $a'$
and, since $m_{cd} \geq 5$, we have $\beta_i=d^{\mm}$ for some $\mm \ne 0$,
which implies $\beta_{i-1}=b^{\ll}d^{\mm}$. This completes the proof of (ii).

(iii) If $u_{i-1}$ has type $(a,b,c)$ then by Definition~\ref{def:RRS}\,(ii)
we have $u_i = c^{\ee} b^{\kk}  \beta_{i-1} w_i$ with
$\ee = \pm 1$ and $\kk \ne 0$; we recall that $b$ and $c$ do not commute.
Then it follows from Definitions~\ref{def:P2G} and~\ref{def:P3G} for P2G- and
P3G-critical words that $b$ and $c$ must be pseudo-generators for $u_i$ and
hence $\alpha_i=\emptyword$ (Definitions~\ref{def:P2Gabr},~\ref{def:P3Gabr}),
and we have completed the proof of the first part of the statement.

Now suppose that $\beta_{i-1} \neq \emptyword$.

Suppose first that $u_i$ is P2G-critical. Since (by definition) the word
$\beta_{i-1}$ consists of internal generators of the type-$\{a,b\}$
P2G word $(u_{i-1})_r$,
the word $\beta_{i-1}$ cannot contain a letter with name $b$.  Also, since
$\l{(u_i)_r}$ has name $a$, every letter in $\beta_{i-1}$ commutes
with $a$.  If $\f{\beta_{i-1}}$ were in $\beta_i$, then it would
be within $(u_i)_s$, which only contains one of the two pseudo-generators
$\{b,c\}$, and hence the $\{b,c\}$-word $\widehat{u_i}$ would have syllable
length at most 3, so less than $m_{bc}$, and could not be critical.
If $\f{\beta_{i-1}}$ were in $\rho_i$, then it would commute with $a$, $b$ and
$c$, and so it could be pushed to the left in $u_{i-1}$ and would be in
$\rho_{i-1}$ rather than $\beta_{i-1}$. So $\f{\beta_{i-1}}$ must have name $c$.
If $\beta_{i-1}$ is not a power of $c$ then it has a prefix $c^{\ll}d^{\pm 1}$,
where $d \neq b,c$.  By the same arguments that we applied to
$\f{\beta_{i-1}}$, this letter $d^{\pm 1}$ cannot be in $\beta_i$ or $\rho_i$;
so there is no such letter, and
$\beta_{i-1}$ must be a power of $c$.

It remains to consider the case when $u_i$ is P3G-critical of type
$(a',b,c)$ or $(a',c,b)$. Again $\beta_{i-1}$ cannot contain $b$,
and the letter $\f{\beta_{i-1}}$ cannot commute with both $b$ and $c$
(or by Definition~\ref{def:P3G} it would be in $\rho_{i-1}$), and so
(again by Definition~\ref{def:P3G}) that letter cannot be in 
$\rho_i=\rho(u_i^\#)$.

By Definition~\ref{def:RRS}(ii), $\f{\beta_{i-1}}$ is preceded in $u_i$ by
$c^Eb^K$. If $\f{\beta_{i-1}}$ has name other than $c$, then 
it follows from Definition~\ref{def:P3G} that $\beta_{i-1}$ is a prefix of
the suffix $(u_i)_r$ of $u_i$, with $\f{\beta_{i-1}}=a'^{\pm 1}$, 
and the subwords $(u_i)_p,(u_i)_q$ are the powers $c^E$ and $b^K$. 
Then the subword $\alpha((u_i)_r)\rho((u_i)_r)$ of $(u_i)^\#$ (see 
Definition~\ref{def:P3G}) can contain just one of $b$ and $c$ (whichever
one commutes with $a'$), and so the $\{b,c\}$-word $\widehat{(u_i)^\#}$ has
syllable length at most four (less than $m_{bc}$), and cannot be critical.

So $\f{\beta_{i-1}}$ must have name $c$.
Suppose that $\beta_{i-1}$ is not a power of $c$, and so it has a prefix
$c^{\ll}d^{\pm 1}$ for some generator $d \ne b,c$. It follows from 
Definition~\ref{def:P3G} that $d^{\pm 1}$ is the first letter of $(u_i)_r$, and 
so $d=a'$, and $d$ commutes with one of $b,c$.

Now if $u_i$ has type $(a',b,c)$ (that is, $d=a'$ commutes with $c$), then
by Definition~\ref{def:P3G}
$(u_i)^\#= (u_i)_p(u_i)_q \alpha((u_i)_r)\rho((u_i)_r)b^\ii$,
where (Definition~\ref{def:RRS}) $u_i := c^Eb^K\beta_{i-1}w_i$.
So $(u_i)_p = c^E, (u_i)_q = b^Kc^{\ll}$, and, since $(u_i)_r$ should be 
P2G-critical of type $\{a',b\}$ and hence $\alpha((u_i)_r)\rho((u_i)_r)$ might
contain letters with name $c$ but none with name $b$,
the $\{b,c\}$-word $\widehat{(u_i)^\#}$ has syllable length at most 4.
So this cannot happen.

However, if $u_i$ has type $(a',c,b)$ (that is, $d=a'$ commutes with $b$), then
by Definition~\ref{def:P3G}
$(u_i)^\#= (u_i)_p(u_i)_q \alpha((u_i)_r)\rho((u_i)_r)c^\ii$, where again
$u_i := c^Eb^K\beta_{i-1}w_i$. In this case it is possible that 
$\alpha((u_i)_r)$ contains letters with name $b$; since we know that
$\beta_{i-1}$ can contain no such letters, such a letter would be within $w_i$.
Then the word $\widehat{(u_i)^\#}$ would have a prefix $c^Eb^Kc^L$,
followed by a power of $b$ lying within $\alpha((u_i)_r)\rho((u_i)_r)$,
and a suffix $c^\ii$; so $\widehat{(u_i)^\#}$ would have syllable
length five, and could  be $\{b,c\}$-critical provided that $m_{bc}=5$.

In this case, if $\beta_{i-1}$ had a third syllable, it would have
a prefix $c^{\ll}(a')^{\mm}e^{\pm 1}$ for some generator $e$ with
$e \ne a'$ or $b$, and then $e^{\pm 1}$ would be in $\alpha((u_i)_r)$
(which we know is non-empty),
so it would commute with $a'$. Then it would be in $\rho(u_i^\#)$ and  hence
commute with $b$ and $c$. But since it is in $\beta_{i-1}$ it also commutes
with $a$, so it could be pushed to the left in $u_{i-1}$ and by
Definition~\ref{def:P3G} would lie in $\rho_{i-1}$ rather than $\beta_{i-1}$.
We conclude that $\beta_{i-1} = c^{\ll}(a')^\mm$ for some integer $\mm$.

(iv) This is clear when $i=1$ and $w_i=u_i$ is critical,
so we may assume that $i>1$.
We shall use the fact that the $\{b',c'\}$-word $\widehat{(u_i)^\#}$ is
critical, and so must have at least $m_{b'c'}\geq 5$ syllables,
the rightmost of which (by Definition~\ref{def:P3G}) is a power of $b'$.

If $u_{i-1}$ is P2G-critical then, since
$u_i = \l{\tau(\widehat{u_{i-1}})} \beta_{i-1} w_i$ where, by part (ii),
$\beta_{i-1}$ is a (possibly trivial) power of a generator, 
the letters of at least $m_{b'c'}-2\geq 3$ syllables of $\widehat{(u_i)^\#}$
must be within $w_i$. 
We can choose $u$ to contain the letters of the second  syllable of 
$\widehat{(u_i)^\#}$
from  the right, and then $v$, immediately to the left of $u$ and containing
some letters with name $b'$, is still within $w_i$.
Note that the prefix $\l{\tau(\widehat{u_{i-1}})} \beta_{i-1}$ of $u_i$ is
within $(u_i)_p(u_i)_q$,and so the P2G-critical word $(u_i)_r$ is a suffix
of $w_i$ and hence of $\vv_i$ in this case.

Otherwise $u_{i-1}$ is P3G-critical of type $(a,b,c)$ with $\{b,c\} =\{b',c'\}$,
and $u_i = c^{\ee} b^{\kk}  \beta_{i-1} w_i$, where 
$\beta_{i-1}$  is as described in (iii).
If $\beta_{i-1} = \emptyword$, then, as above, $w_i$ contains  at at least $m_{b'c'}-2 \geq 3$ 
syllables of $\widehat{(u_i)^\#}$, and we find $u,v$ just as above.
If $\beta_{i-1} \neq \emptyword$, then $w_i$ contains at least $m_{b'c'}-3$
syllables of $\widehat{(u_i)^\#}$ so, provided that $m_{b'c'}>5$, the argument
above still works to find $u,v$ within $w_i$. 
Similarly, if $m_{b'c'}=5$ and $b=b'$, $c=c'$, then $\beta_{i-1}$ can 
contain a power of $c$ but no power of $b$, and the next syllable of $\widehat{(u_i)^\#}$ to the right of that power of $c$ is within $w_i$ and is a power of $b$ ;
then we can choose $u$ and $v$ within $w_i$, as above.
In all these cases,  
$c^{\ee} b^{\kk} \beta_{i-1}$ is within $(u_i)_p(u_i)_q$ (see Definition~\ref{def:P3G}),and so $(u_i)_r$ is a suffix of $w_i$.

It remains to consider the case where $u_{i-1}$ is P3G-critical, and $m_{bc}=5$ with $b=c'$ and $c=b'$; then $u_{i-1}$ has type $(a,b,c)=(a,c',b')$.
For $\widehat{(u_i)^\#}$ to
have syllable length at least $5$, the word $w_i$ must still have a suffix
containing letters with names $c'$ and $b'$, 
with (by Definitions~\ref{def:P3G},~\ref{def:P3Gabr}) a letter with name $b'$ within $(u_i)_r$,and one with name $c'$  either to
the left of $(u_i)_r$ or within $\alpha((u_i)_r)$, and hence to the left of a
letter with name $b'$.

It remains to show that $\vv_i$ has a P2G-critical
suffix of type $\{a',b'\}$. If $\beta_{i-1}$ is a power of $c$, then once again
the P2G-critical word $(u_i)_r$ is a suffix of $w_i$ and hence of $\vv_i$.
Otherwise, by part (iii), we have $\beta_{i-1} = c^\ll (a')^\mm$,
with $m_{aa'} = 2$.  Now $u_{i-1}$ has a suffix $v$, whose first and last
letters have names $a'$ and $a$, from which, once we have 
deleted all letters in $\rho_{i-1}$, we obtain a word of the form 
$(a')^\mm a^\nn$ with $\nn \ne 0$. Since (by assumption) $u_{i-1}$ is
P3G-critical of type $(a,c',b')$, we know from the first statement of (iv)
(applied to $u_{i-1}$) that $w_{i-1}$ has a suffix 
containing both $b'$ and $c'$, and so (since $v$ contains neither $b'$
or $c'$) $v$ must be a proper suffix of that, and so a suffix of $w_{i-1}$.
Then, since $m_{aa'}=2$ we see that the word $vw_i$ is a P2G-critical suffix
of $\vv_i$ of type $\{a',b'\}$ with the letters with name
$a$ lying within $\alpha(vw_i)$.
\end{proof}

We note a few elementary consequences of Lemma~\ref{lem:RRSdetails}
\begin{corollary} \label{cor:RRSdetails2}
\begin{mylist}
\item[(i)] For $1<i<m+1$, $\l{u_{i-1}}$ cannot commute with both generators
in $P_i$.
\item[(ii)] For $1\leq i < m+1$, if $\alpha_i$ is non-empty then it 
	lies within $w_i$.
\end{mylist}
\end{corollary}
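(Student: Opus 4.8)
The plan is to derive both statements by a short case analysis on the type of $u_{i-1}$, using the structural information of Lemma~\ref{lem:RRSdetails}(ii)--(iii) together with the elementary constraints built into the definitions of P2G- and P3G-critical words.

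For (i), write $x$ for the name of $\l{u_{i-1}}$; I would show that $x$ fails to commute with some element of $P_i$, which is the assertion. If $u_{i-1}$ is P3G-critical of type $(a,b,c)$, then $x=a$ by Definition~\ref{def:P3G} and $P_i=\{b,c\}=P_{i-1}$ by Lemma~\ref{lem:RRSdetails}(iii); since $m_{ab}=3$, the name $x$ does not commute with $b\in P_i$. If $u_{i-1}$ is P2G-critical, then by Lemma~\ref{lem:P2G_taufacts}(i) the letter $\l{u_{i-1}}$ is a pseudo-generator, so we may write $P_{i-1}=\{x,y\}$, and $2<m_{xy}<\infty$ by Definition~\ref{def:P2G}. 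When $\beta_{i-1}=\emptyword$, Definition~\ref{def:RRS}(i) gives $\f{u_i}=\l{\tau(\widehat{u_{i-1}})}$, which by Lemma~\ref{lem:2gen_taufacts} has name $y$; since $\f{u_i}$ is a pseudo-generator of $u_i$ we conclude $y\in P_i$, and $x$ does not commute with $y$. When $\beta_{i-1}\neq\emptyword$, Lemma~\ref{lem:RRSdetails}(ii) gives $P_i=\{y,c\}$ for some $c$, so again $y\in P_i$ and $x$ does not commute with $y$.

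For (ii), I would first observe that $\alpha_i$ can be non-empty only when $i=1$ or when $i>1$ and $u_{i-1}$ is P2G-critical with $\beta_{i-1}=\emptyword$: indeed Lemma~\ref{lem:RRSdetails}(iii) forces $\alpha_i=\emptyword$ whenever $u_{i-1}$ is P3G-critical, and the $\beta_{i-1}\neq\emptyword$ branch of Lemma~\ref{lem:RRSdetails}(ii) forces $\alpha_i=\emptyword$ as well. If $i=1$ then $u_1=w_1$ and $\alpha_1=\alpha(u_1)$ is literally a subword of $w_1$. If $i>1$ (so $u_{i-1}$ is P2G-critical with $\beta_{i-1}=\emptyword$), Definition~\ref{def:RRS}(i) gives $u_i=\f{u_i}\,w_i$ with $\f{u_i}$ a single pseudo-generator letter; since $\alpha(u_i)$ is a scattered subword of $(u_i)_p$ that by definition excludes every occurrence of $\f{u_i}^{\pm 1}$ (deletion of $P\cup P^{-1}$ for a P2G word, of $\f{u}^{\pm 1}$ for a P3G word), non-emptiness of $\alpha_i$ forces $(u_i)_p$ to reach past the initial letter $\f{u_i}$ into $w_i$, whence $\alpha_i$ lies within $w_i$.

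The only delicate point, and the one I would check most carefully, is the tracking of last-letter names in the P2G subcases of (i): one must chain Lemma~\ref{lem:P2G_taufacts}(i) (which identifies $\l{u_{i-1}}$ with $\l{\widehat{u_{i-1}}}$) with Lemma~\ref{lem:2gen_taufacts} (which flips that name under $\tau$) in order to be sure that the generator landed on in $P_i$ is the partner $y$ of $x$ rather than $x$ itself; once that is secured, the conclusion is just the standing inequality $m_{xy}>2$ (respectively $m_{ab}=3$ in the P3G case).
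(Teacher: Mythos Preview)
Your proof is correct and follows essentially the same approach as the paper's: both parts are derived by a short case analysis on the type of $u_{i-1}$, invoking Lemma~\ref{lem:RRSdetails}(ii)--(iii) and the definitions. The only cosmetic difference is that in part~(i) for the P2G case you split on whether $\beta_{i-1}$ is empty, whereas the paper uses the single fact $P_i\cap P_{i-1}\ne\emptyset$ from Lemma~\ref{lem:RRSdetails}(ii) and then splits on whether $\l{u_{i-1}}\in P_i$; both routes identify the non-commuting partner in $P_i$ and are equally short.
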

\begin{proof}
(i) If $u_{i-1}$ is P2G-critical then $\l{u_{i-1}} \in P_{i-1}$, and
$|P_i \cap P_{i-1}|>0$ by Lemma~\ref{lem:RRSdetails}\,(ii).
If $\l{u_{i-1}}$ is in $P_i$,
then it cannot commute with the other pseudo-generator of $P_i$.
Otherwise $\l{u_{i-1}}$ is not in $P_i$, and it does not commute
with the other pseudo-generator in $P_{i-1}$, which is in $P_i$.

If $u_{i-1}$ is P3G-critical then $P_i=P_{i-1}$, and by
Definition~\ref{def:P3G} $\l{u_{i-1}}$ commutes with one element
of $P_{i-1}$ but not with the other.

(ii) When $i=1$, the result holds trivially, since $w_1=u_1$.
	When $i>1$, by Lemma~\ref{lem:RRSdetails}(iii), $u_{i-1}$ cannot be P3G-critical
when $\alpha_i \neq \emptyword$, so $u_{i-1}$ is P2G-critical.
Then by Lemma~\ref{lem:RRSdetails}(ii), $\beta_{i-1}=\emptyword$ and so
$u_i=\l{\tau(u_{i-1})}w_i$.
\end{proof}

\begin{lemma}\label{lem:RRScheck}
We can check in linear time whether a given factorisation
$w=\mu w_1 \cdots w_mw_{m+1}x$ of a word $w$ is associated with an RRS $U$
of $w$.
\end{lemma}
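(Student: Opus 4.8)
The claim is a routine verification-complexity statement, so the plan is to describe an algorithm that scans the factorisation once from left to right, maintaining a bounded amount of state, and checks at each boundary the local conditions imposed by Definition~\ref{def:RRS}. First I would note that the input consists of $w$ and the cut-points that determine $\mu, w_1,\ldots,w_{m+1},x$; summing the lengths of these pieces is linear, and we may assume $m\ge 0$ and each $w_i$ ($1\le i\le m$) nonempty, which is an $O(1)$ check per piece. The degenerate case $m=0$ is handled directly: we just check that $w_{m+1}=w_1$ is nonempty, that $\f{w_1}$ commutes with every other letter of $w_1$ (one scan of $w_1$, using the commutation data of the Coxeter diagram), and that $\f{w_1}=\f{x}^{-1}$.

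For $m>0$ the plan is to process $u_1,\ldots,u_m,u_{m+1}$ in turn. We set $u_1:=w_1$ and verify it is P2G-critical or P3G-critical; by Lemma~\ref{lem:critlintest} (for the underlying $2$-generator word) together with a linear scan to strip internal letters and check the commutation conditions of Definitions~\ref{def:P2G} and~\ref{def:P3G}, and, in the P3G case, Lemma~\ref{lem:abcrit2} to test condition~(i) on $u_r$ and a further linear scan for condition~(ii), this costs $O(|u_1|)$; along the way we record the data needed downstream, namely whether $u_1$ is P2G- or P3G-critical, its (primary) pseudo-generators $P_1$, the suffix $\beta_1$, and the letter $\l{\tau(\widehat{u_1})}$ (or the pair $c^{\ee},b^{\kk}$ in the P3G case). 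Inductively, having validated $u_{i-1}$ and its recorded data, the word $u_i$ is \emph{forced}: by Definition~\ref{def:RRS}(i)–(ii) it equals $\l{\tau(\widehat{u_{i-1}})}\beta_{i-1}w_i$ or $c^{\ee}b^{\kk}\beta_{i-1}w_i$, both computable in time $O(|u_i|)$ since $|\beta_{i-1}|\le|u_{i-1}|$. We then check that the prescribed $u_i$ is indeed critical of one of the two allowed kinds, again in time linear in its length, and record its data. Finally we treat $u_{m+1}$: it is forced in the same way, and we check it is nonempty, that its first letter commutes with all its other letters, and that $\f{u_{m+1}}=\f{x}^{-1}$; if $|x|>1$ we additionally note (cf.\ the remark after Definition~\ref{def:RRS}) nothing further is required, since the definition only constrains $\f{x}$.

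For the running time, the key point is that the total work is $\sum_{i=1}^{m+1} O(|u_i|)$, and each $u_i$ for $i\le m$ is a prefix of $\tau$'s output overlapping $w_i$, so its length is at most $|\beta_{i-1}|+|w_i|+O(1)$; since $\beta_{i-1}$ is a suffix of $u_{i-1}$, and more precisely of $w_{i-1}$ by Corollary~\ref{cor:RRSdetails2}(ii) when $i-1>1$ (or of $w_1$ when $i-1=1$), the lengths $|u_i|$ telescope against the disjoint pieces $w_1,\ldots,w_m$ of $w$, giving $\sum_i|u_i| = O(|w|)$. Hence the whole check is $O(|w|)$.

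The main obstacle is not conceptual but bookkeeping: one must be careful that recomputing each forced $u_i$, and in particular re-extracting $\widehat{u_i}$ and the decomposition $u_i=(u_i)_p(u_i)_q(u_i)_r$ (P3G) or $u_i=u_pu_qu_s$ (P2G) together with $\alpha_i,\beta_i,\rho_i$, can genuinely be done by a single left-to-right scan whose cost is $O(|u_i|)$ rather than, say, $O(|u_i|^2)$; this is exactly the content of Lemmas~\ref{lem:critlintest} and~\ref{lem:abcrit2}, so the only thing to verify is that identifying $\alpha,\rho,\beta$ (which involves deciding, for each internal letter, whether it commutes with one or both pseudo-generators, using the fixed finite Coxeter matrix) is also linear, which it plainly is. Once this is in place the telescoping bound above closes the argument.
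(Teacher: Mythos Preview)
Your approach is essentially the paper's: verify each $u_i$ for criticality using Lemmas~\ref{lem:critlintest} and~\ref{lem:abcrit2}, with each such check taking time linear in $|u_i|$. The paper's own proof is a two-sentence sketch that stops there; you go further and try to justify that $\sum_i |u_i| = O(|w|)$, which is commendable, but your telescoping argument has two gaps.

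First, in the P3G case, Definition~\ref{def:RRS}(ii) gives $u_i = c^{\ee}b^{\kk}\beta_{i-1}w_i$, and $|\kk|$ is \emph{not} $O(1)$: by Lemma~\ref{lem:abcrit1} one of $|I|,|K|$ equals $1$ but the other can be arbitrarily large, so your bound $|u_i|\le|\beta_{i-1}|+|w_i|+O(1)$ fails here. Second, your citation of Corollary~\ref{cor:RRSdetails2}(ii) to place $\beta_{i-1}$ inside $w_{i-1}$ is wrong: that corollary concerns $\alpha_i$, not $\beta_{i-1}$. What you actually need is an amortised argument showing that any letter of $w$ is carried forward into at most a bounded number of consecutive $u_j$'s. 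This is true, and the ingredients are in Lemma~\ref{lem:RRSdetails}(ii)--(iv): the letters of $\beta_{i-1}$ become (primary) pseudo-generators of $u_i$ (or at worst of $u_{i+1}$ in the two-syllable case $\beta_{i-1}=c^{\ll}d^{\mm}$) and hence are not in $\beta_i$ (resp.\ $\beta_{i+1}$); and the letters contributing to $b^{\kk}$ lie in $(u_{i-1})_r$, which is a suffix of $w_{i-1}$ in all but the exceptional case of Lemma~\ref{lem:RRSdetails}(iv), and in any case become pseudo-generators of $u_i$ and so are not carried again. Filling in this amortisation (rather than appealing to a straight telescoping against the disjoint $w_j$) would make your argument correct.
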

\begin{proof} By Lemma~\ref{lem:abcrit2}, we can check in linear time whether
a given word could occur as the suffix $(u_i)_r$ of a critical word $u_i$ of
type $(a,b,c)$. Given this and Lemma~\ref{lem:critlintest},
we see easily that all of the types of $\tau$-moves can be
executed in linear time, and the result follows.
\end{proof}
 
\begin{example}~\label{eg:tricky_w_in_G_2}
We consider the word
$w=(b,c)_{n-1} \cdot aba \cdot  {}_{n-2}(c,b)x^{-1}$ that was considered 
in Example~\ref{eg:tricky_w_in_G}.
That admits an RRS of length 2, with associated decomposition $w_1w_2w_3x^{-1}$,
where $w_1=(b,c)_{n-1}aba$, $w_2={}_{n-2}(c,b)$ and $w_3$ is empty.

The word $w_1$ is a P3G-critical word of type $(a,b,c)$,
as we observed in Example~\ref{eg:P3G}, and
$\tau(w_1) = (c,b)_{n-1}acb$, as we computed in that example.

Now $u_2 = cb\cdot w_2 = {}_n(c,b)$, and $\tau(u_2)={}_n(b,c)$.
So application of the RRS to $w$ yields
\begin{eqnarray*}
w &\to& \tau((b,c)_{n-1} aba) \cdot {}_{n-2}(c,b)x^{-1}
=(c,b)_{n-1}acb\cdot  {}_{n-2}(c,b)x^{-1}\\
	&\to& (c,b)_{n-1} a \cdot  \tau({}_{n}(c,b))x^{-1} 
 = (c,b)_{n-1} a  \cdot {}_{n}(b,c)x^{-1},
\end{eqnarray*}
which ends in the cancelling pair $xx^{-1}$.
\end{example}

The following example with $n=4$ shows that we need $n$ to be at least $5$.

\begin{example} \label{eg:n_atleast5}
When $n=4$, the word $(bab)(cbca)(bc)b^{-1}$ admits the RRS
\[ bab, acbca,bcbc,bb^{-1}\] associated with the decomposition of length 3 that
corresponds to the bracketing shown, and hence allows the sequence of reductions
\begin{eqnarray*}
(bab)cbcabcb^{-1} 
	&\to& \tau(bab)cbcabcb^{-1} = ab(acbca)bcb^{-1}\\
	&\to& ab(c\tau(aba)c)bcb^{-1}=abcba(bcbc)b^{-1}\\
&\to& abcba\tau(bcbc)b^{-1}=abcbacbcbb^{-1}\to abcbacbc.\end{eqnarray*}
But $bacbcbabcb^{-1}$, which represents the same element of $G$, admits no RRS;
its only P2G- or P3G-critical subwords are $cbcb$ and $bab$, and neither of
those subwords work as $u_1=w_1$ in an RRS.
\end{example}

We define an \emph{optimal RRS} for a word in a $A_3,B_3$-free group $G$ as
follows 
(following \cite{BCMW}).
\begin{definition}[Optimal RRS]
\label{def:optRRS}
We say that an RRS $U=u_1,\ldots,u_{m+1}$ for $w$, with associated
decomposition $w=\mu w_1\cdots w_{m+1}\gamma$, is \emph{optimal} if
\begin{mylist}
\item[{\rm (i)}] the first letter of $w_1$ is at least as far to the right as in
any other RRS for $w$, i.e. the prefix $\mu$ is as long as
possible;
\item[{\rm (ii)}] $\l{\tau({u}_i)} \neq \f{w_{i+1}}^{-1}$ for $i \leq m$,
	and $\f{\gamma}$ does not appear in $w_{m+1}$;
\item[{\rm (iii)}] if, for some $i>1$ with $1 < i \le m$, $u_{i-1}$ is
	a P2G word of type $\{x,y\}$ and $\alpha_i$ commutes with both $x$ and $y$, then $u_{i}$ is 
\begin{mylist}
\item[either] a P2G word  of type $\{z,t\}$
with $|\{x,y\} \cap \{ z,t\} \} | = 1$ 
\item[or] an
$(a,b,c)$-critical word with $|\{x,y\} \cap \{b,c\} \} | = 1$.
\end{mylist}
\end{mylist}
\end{definition}
Note that our condition (iii) is slightly different  from the third condition
for optimality of \cite{BCMW}. 
We prefer to state our condition (iii) in this form for greater readability.
Note also that it is possible to have
two successive $\tau$-moves of type $(a,b,c)$ in an optimal RRS. 

We shall use the following properties of an optimal RRS.
\begin{lemma} \label{lem:properties_optRRS}
Let U be an optimal RRS of length $m$ and $1 \le i \le m$.
\begin{mylist}
\item[(i)] If $i>1$, the word $u_i$ has no proper critical suffix of the same type as
$u_i$ that is contained within $w_i$.
\item[(ii)] Let $x$ be the first letter in $u_i$ after $\f{u_i}$ that is
not part of $\alpha_i$. Then $x$ does not have the same name as $\f{u_i}$.
	If $i>1$ then $\alpha_i$ is a prefix of $\s{u_i}$.
\end{mylist}
\end{lemma}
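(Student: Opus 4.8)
The plan is to derive both parts directly from the optimality conditions in Definition~\ref{def:optRRS}, together with the structural constraints on $\tau$-moves recorded in Lemmas~\ref{lem:2gen_taufacts}, \ref{lem:P2G_taufacts}, \ref{lem:RRSdetails} and the definitions of P2G- and P3G-critical words.

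For part (i), I would argue by contradiction. Suppose $i>1$ and $u_i$ has a proper critical suffix $v$ of the same type as $u_i$ with $v$ contained within $w_i$. The idea is to replace the tail of the RRS from step $i$ onwards by a shorter RRS starting at $v$, contradicting optimality condition (i) (maximality of the prefix $\mu$). Concretely, since $v$ is a suffix of $w_i$ and is of the same type (P2G of type $\{x,y\}$, or P3G of type $(a,b,c)$) as $u_i$, I want to show that $\tau(v)$ is a suffix of $\tau(u_i)$, so that replacing $u_i$ by $v$ and $w_i$ by the corresponding shorter suffix still leaves all the subsequent overlaps $u_{i+1},\ldots,u_{m+1}$ intact, because those depend only on $\l{\tau(\widehat{u_i})}$ (resp.\ $c^{\ee}b^{\kk}$), which by Lemma~\ref{lem:critsubword} (or its P2G/P3G analogue) agrees for $u_i$ and its critical suffix $v$. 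The point is that a critical suffix of a critical word has the same $p$- and $n$-invariants (Lemma~\ref{lem:critsubword}), hence the same $\tau$-behaviour at its right end; for P3G words the analogous statement follows from the definition of $\tau(u)$ in Definition~\ref{def:P3G_tau}, since $\kk$ and $\ee$ are determined by the right-hand portion. This produces a genuine RRS for $w$ whose decomposition begins strictly later than $\mu w_1$, contradicting optimality.

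For part (ii), let $x$ be the first letter of $u_i$ after $\f{u_i}$ that is not part of $\alpha_i$. By Definitions~\ref{def:P2G} and~\ref{def:P3G}, the prefix $\alpha_i$ consists precisely of the internal letters that, together with copies of $\f{u_i}$, make up $u_p$; the letter $x$ is therefore the first letter of $u_i$ lying in $(u_i)_q$, so $x$ has name in $P_i$ (a pseudo-generator), and by definition of $u_p$ in both definitions it has a name \emph{different} from that of $\f{u_i}$. For the second assertion, when $i>1$ I would use Corollary~\ref{cor:RRSdetails2}(ii): if $\alpha_i$ is non-empty then it lies within $w_i$, and moreover $u_i$ begins with $\l{\tau(\widehat{u_{i-1}})}$ (by Definition~\ref{def:RRS}, using Lemma~\ref{lem:RRSdetails}(ii),(iii) to see that in the relevant cases $\beta_{i-1}$ is empty or the appropriate structure forces $\f{u_i}$ to be a single pseudo-generator). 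Hence $\f{u_i}$ is a single letter not part of $\alpha_i$, so $\alpha_i$ is contained in $\s{u_i}$, i.e.\ $\alpha_i$ is a prefix of $\s{u_i}$.

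The main obstacle I anticipate is the first part: making the replacement argument fully rigorous requires checking that, when $u_i$ (and hence $v$) is P3G-critical, all the data $(\alpha,\rho,\beta, \ii,\jj,\kk,\ee)$ needed to define $\tau(v)$ and the overlap with $u_{i+1}$ really are inherited from the suffix structure of $u_i$, and similarly that in the P2G case one is not accidentally changing $P_i$ or the type. This is where I would need to lean carefully on Lemma~\ref{lem:critsubword}, on the explicit forms in Lemma~\ref{lem:abcrit1}, and on the fact (Lemma~\ref{lem:RRSdetails}(iv)) that a P3G $u_i$ already forces a rigidly structured suffix of $w_i$, which severely limits what a same-type critical suffix can look like. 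Everything else is bookkeeping with the definitions.
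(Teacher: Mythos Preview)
Your approach to part (i) is essentially the paper's: assume a proper critical suffix $v$ of $u_i$ lies in $w_i$, and build a new RRS starting at $v$ to contradict optimality condition~(i). One remark: you do not need (and will not in general have) that $\tau(v)$ is a \emph{suffix} of $\tau(u_i)$; all that is required for $v,u_{i+1},\ldots,u_{m+1}$ to be an RRS is that $\l{\tau(\hat v)}=\l{\tau(\widehat{u_i})}$ and $\beta(v)=\beta_i$ (respectively $c^{\ee}b^{\kk}$ and $\beta$ match in the P3G case), and these do follow since $v$ is a same-type critical suffix of $u_i$.

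Part (ii), however, has a genuine gap. You write that $(u_i)_p$ consists of $\alpha_i$ together with copies of $\f{u_i}$, and then conclude that the first non-$\alpha_i$ letter $x$ after $\f{u_i}$ lies in $(u_i)_q$. That ``therefore'' is unjustified: nothing in the definitions prevents $(u_i)_p$ from containing a \emph{second} occurrence of $\f{u_i}^{\pm 1}$, and such a letter would be your $x$ while having the same name as $\f{u_i}$. The paper's proof deals with exactly this case: if $x=\f{u_i}^{-1}$ then $\widehat{u_i}$ is not freely reduced, so not critical; if $x=\f{u_i}$ then the suffix of $u_i$ beginning at $x$ is a proper critical suffix of the same type lying in $w_i$, contradicting part~(i) (or, for $i=1$, directly contradicting optimality condition~(i)). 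Only after ruling these out can one conclude that $x$ lies beyond $(u_i)_p$. Your argument for the second sentence of (ii) inherits the same problem: Corollary~\ref{cor:RRSdetails2}(ii) gives $\alpha_i\subseteq w_i$, but that alone does not make $\alpha_i$ a \emph{prefix} of $\s{u_i}$; you need precisely the absence of further $\f{u_i}^{\pm 1}$ letters in $(u_i)_p$, which is what the first sentence of (ii) establishes.
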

\begin{proof} (i) If $u_i$ had such a suffix $w_{i2}$ with $w_i=w_{i1}w_{i2}$,
then we could define a new RRS $w_{i2},u_{i+1},\ldots,u_{m+1}$ for $w$ with
decomposition $\mu'w_{i2}w_{i+1} \cdots w_{m+1}\gamma$ where
$\mu' = \mu w_1\cdots w_{i-1}w_{i1}$, contradicting the optimality of $U$.

(ii) Let $b$ be the name of $f[u_i]$. If $i>1$ and either $u_{i-1}$ has type $(a,b,c)$ or $\beta_{i-1} \ne
\emptyword$, then $\alpha_i=\emptyword$ by Lemma~\ref{lem:RRSdetails}\,(ii) and
(iii), so $x$ is the second letter of $u_i$ and is part of
$\tau(u_{i-1})$, and we see directly from the definition of $\tau(u_{i-1})$
that $x$ is either $f[\beta_{i-1}]$ or has name $c$, so
has a different name from $\f{u_i}$. Otherwise $x$ is in $w_i$.
We cannot have $x = \f{u_i}^{-1}$ because in that case $\widehat{u_i}$ would not
be freely reduced and so would not be critical. If $x = \f{u_i}$ then $u_i$
would have a critical suffix of the same type as $u_i$ starting at $x$,
contradicting (i). So $x$ does not have the same name as $\f{u_i}$.

	Finally, by Definition~\ref{def:P2Gabr} any letter with name distinct
	from $\f{u_i}$ must come after every letter in $\alpha_i$, and hence it follows from the above, that $\alpha_i$ is a prefix of $\s{u_i}$.
\end{proof}

\begin{proposition} \label{prop:exists_optRRS}
If a word $w$ in a $A_3,B_3$-free group admits an RRS then it admits an
optimal RRS.
\end{proposition}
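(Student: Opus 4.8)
The plan is to construct an optimal RRS from an arbitrary one by making finitely many ``improvements'', each of which cannot be undone by the others, so that the process must terminate. Suppose $w$ admits some RRS. First I would address condition (i) of Definition~\ref{def:optRRS}: among all RRS for $w$, choose one, say $U=u_1,\dots,u_{m+1}$ with decomposition $\mu w_1\cdots w_{m+1}\gamma$, for which the prefix $\mu$ is as long as possible. Such a choice exists because the length of $\mu$ is a non-negative integer bounded by $|w|$. (One subtle point: one must check that ``being an RRS for $w$'' is preserved under the operations used to enforce (ii) and (iii), i.e. those operations do not shorten $\mu$; I would verify this as I go, so that the maximality of $|\mu|$ established at this first step is never violated later.)

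Next I would enforce condition (ii). If for some $i\le m$ we have $\l{\tau(u_i)}=\f{w_{i+1}}^{-1}$, then in the word obtained after applying the first $i$ $\tau$-moves there is already a free cancellation available at that point, so we may simply truncate the RRS: replace $U$ by $u_1,\dots,u_i,u_{i+1}'$ where $u_{i+1}'$ is the appropriate single-letter or short tail word, obtaining a shorter RRS for $w$ (with the same or longer $\mu$). Similarly, if $\f{\gamma}$ appears already within $w_{m+1}$, then an earlier cancellation is available and we can shorten $m$ or shift $\gamma$ leftwards. Since $m$ strictly decreases under these operations and $m\ge 0$, this sub-process terminates; and crucially it never decreases $|\mu|$, so (i) is preserved. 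After this we have an RRS satisfying (i) and (ii).

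Then I would enforce condition (iii). Suppose $1<i\le m$, $u_{i-1}$ is P2G of type $\{x,y\}$, and $\alpha_i$ commutes with both $x$ and $y$, but $u_i$ fails the dichotomy in (iii) — i.e. $u_i$ is P2G of type $\{z,t\}$ with $\{x,y\}=\{z,t\}$, or P2G with $|\{x,y\}\cap\{z,t\}|=0$, or P3G of type $(a,b,c)$ with $|\{x,y\}\cap\{b,c\}|\ne 1$. Here I would use the structural constraints of Lemma~\ref{lem:RRSdetails}\,(ii): since $u_{i-1}$ is P2G, $P_i\cap P_{i-1}\ne\emptyset$, which already rules out the $|\{x,y\}\cap\{z,t\}|=0$ case; and when $\beta_{i-1}\ne\emptyset$ the lemma forces $|P_i\cap P_{i-1}|=1$, so a genuine violation forces $\beta_{i-1}=\emptyset$ and $P_i=P_{i-1}$ (or $P_{i-1}\subset\{b,c\}$ with both generators in $P_i$). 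In that situation the letter $\l{\tau(u_{i-1})}$ lies in $P_i$ and, together with $\alpha_i$ commuting with both pseudo-generators of $u_i$, one sees that $u_{i-1}$ already ``absorbs'' into $u_i$: the relevant $\tau$-moves can be composed, i.e. $u_{i-1}$ is (a translate by commutations of) a critical subword of the longer critical word $u_i'$ built from $\l{\tau(u_{i-1})}$ and $u_i$, using Lemma~\ref{lem:critsubword} for the purely 2-generator part. So one can delete $u_{i-1}$ from the RRS, replacing the pair $u_{i-1},u_i$ by a single critical word $u_{i-1}'$ with $w_{i-1}'=w_{i-1}w_i$, producing an RRS for $w$ of strictly smaller length $m$, with the same $\mu$ and still satisfying (ii) (after possibly re-running the truncation step). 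Again $m$ strictly decreases, so this terminates.

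The main obstacle, and the step requiring the most care, is the very last one: showing that a violation of (iii) really does let us merge $u_{i-1}$ into $u_i$ while keeping the result an RRS. This requires a careful case analysis according to whether $u_i$ is P2G or P3G, using Lemma~\ref{lem:RRSdetails}\,(ii)--(iii), Corollary~\ref{cor:RRSdetails2}, Lemma~\ref{lem:properties_optRRS}, and the defining conditions of Definitions~\ref{def:P2G} and~\ref{def:P3G}; in particular one must check that the merged word is still critical of the appropriate type, that its $\alpha,\beta,\rho$ decomposition is the expected one, and that the recurrence in Definition~\ref{def:RRS} linking it to $u_{i+1}$ still holds. One also has to confirm a termination argument that simultaneously handles all three conditions: I would use the lexicographic measure $(\,-|\mu|,\ m\,)$, noting that the (i)-step maximises $-|\mu|$ once and for all, and both the (ii)-step and the (iii)-step strictly decrease $m$ without increasing $-|\mu|$; hence the process halts at an RRS satisfying all of (i), (ii), (iii), which is therefore optimal. $\Box$
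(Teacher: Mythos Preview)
Your overall strategy is the same as the paper's: fix $|\mu|$ maximal first, then truncate to enforce (ii), then decrease $m$ to enforce (iii). The paper argues exactly this way, choosing among RRS satisfying (i) and (ii) one of minimal length $m$ and showing that a failure of (iii) produces a strictly shorter RRS with the same $\mu$.

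There is, however, a genuine gap in your merging step. You claim that when (iii) fails one can always replace the pair $u_{i-1},u_i$ by a single critical word $u_{i-1}'$ on $w_{i-1}w_i$. This is not true in general: the concatenated word $v:=u_{i-1}w_i$ is indeed pseudo-generated of the same type as $u_i$, but $\widehat{v}$ (respectively $\widehat{v^\#}$ in the P3G case) need not be geodesic, so $v$ need not be critical. The paper handles this explicitly: if some P2G prefix $v_0$ of $v$ has $\widehat{v_0}$ non-geodesic, one obtains a shorter RRS $u_1,\dots,u_{i-2},v',v''$ where an earlier cancellation occurs; and in the P3G case there are four further subcases (labelled (i)--(iv) in the paper) where $\widehat{v^\#}$ fails to be geodesic, each yielding a shorter RRS by a different mechanism than straight merging. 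Only after disposing of all these does one check that $v$ is critical with $\l{\tau(\widehat{v})}=\l{\tau(\widehat{u_i})}$ (or the analogous statement for $v^\#$), using the equalities $p(\widehat{v})=p(\widehat{u_i})$, $n(\widehat{v})=n(\widehat{u_i})$ that follow from geodesicity of $\widehat{v}$ together with the fact (from condition (i)) that $w_i$ has no critical suffix of the same type. Your appeal to Lemma~\ref{lem:critsubword} does not cover this, since that lemma presupposes the ambient word is already critical.

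A smaller point: you invoke Lemma~\ref{lem:properties_optRRS}, whose hypothesis is that $U$ is optimal. This is harmless in practice, because the proof of part (i) of that lemma uses only condition (i) of optimality, which you have already secured; but you should say so.
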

\begin{proof}
Part of this proof is essentially the same as the existence part of the proof of
\cite[Lemma 3.12]{BCMW} for $3$-free groups.
We can clearly restrict attention to those RRS for $w$ in which the first letter of
	$w_1$ is as far right as possible, and hence (i) holds. 

Then, given such an RRS associated to a decomposition
$\mu w_1\cdots w_{m+1}\gamma$ of $w$ for which $\l{\tau(u_i)}=\f{w_{i+1}}^{-1}$
for some $i \leq m$, we can associate the shorter RRS
$u_1,\cdots,u_i,\emptyword$
with the decomposition $\mu w_1\cdots w_i\gamma'$ of $w$, where
$\gamma'=w_{i+1}\cdots w_{m+1}\gamma$.  And given an RRS associated with a
decomposition of $w$ for which $\f{\gamma}$ appears in $w_{m+1}$ we can replace
$w_{m+1}$ by a prefix that ends before $\f{\gamma}$. 

So we can find an RRS for $w$ satisfying both conditions (i) and (ii).

Suppose we have an RRS satisfying conditions (i) and (ii), but not (iii),
for some $i$ with $1 \leq i \leq m$, and choose such an RRS such that its
length $m$ is as small as possible.
Then the hypothesis of this condition could apply only
when $\alpha_i$ commutes with the pseudo-generators of $u_{i-1}$ (which is
P2G-critical) and, for the conclusion to fail, either $u_i$ would
have to be a P2G word of the same type as $u_{i-1}$, or else $u_{i-1}$ and $u_i$
would be of types $\{b,c\}$ and $(a,b,c)$, respectively. In either of these
	cases we would have $P_i=P_{i-1}$ and $\beta_{i-1} = \emptyword$ by
Lemma~\ref{lem:RRSdetails}\,(ii). We shall prove that under these conditions
we can find a shorter RRS for $w$ that starts in the same place as the
original RRS, and the existence of an optimal RRS will
then follow from the minimality of $m$.

First we claim that the word $v:=u_{i-1}w_i$ is pseudo-generated with the same
pseudo-generators as $u_i$, and $\beta(v)=\beta_i$.
If $u_i$ is P2G, then, since $\beta_{i-i}=\emptyword$ and $\alpha_i$
commutes with both pseudo-generators of $u_{i-1}$, all internal letters of $v$
can be moved using commutation relations to the beginning 
and end of the word $v$,
so $v$ is P2G and $\beta(v)=\beta_i$.
If $u_i$ is P3G, then $v$ is P3G with $v_p=(u_{i-1})_p$,
$v_q=(u_{i-1})_q(u_{i-1})_s(w_i)_s$, where $(w_i)_s = \s{(u_i)_p(u_i)_s}$,
and $v_r=(u_i)_r$. Again we have $\beta(v)=\beta_i$.

Now, suppose first that $v$ has a P2G prefix $v_0$ such that $\widehat{v_0}$ is not
geodesic. Then $v$ has a prefix $v'$ such that the final letter
of $\tau(v')$ cancels the first pseudogenerator of $v$ after $v'$.
In that case, $w$ has a shorter RRS $u_1,\dots,u_{i-2},v',v''$, where $v''$ consists
of the internal letters of $v$ that lie between $\l{v'}$ and the 
pseudogenerator of $v'$ that cancels.
	So we may assume that $v$ has no such prefix.

Suppose next that $u_i$ (and hence also $v$) is P3G critical. Then,
since $v_r=(u_i)_r$, we know that $\widehat{v_r}$ can be
transformed by a $\tau$-move to a word of the form $b^Ia^Jb^K$. 
We recall that $v^\# = v_pv_q \alpha(v_r)\rho(v_r)b^I$, and we denote by
$\widehat{v^\#}$ the word formed from $v^\#$ by deleting all its letters
with
names other than $b$ and $c$. Recall also that $\alpha(v_r)$ could have letters
with name $c$, but all other letters in $\alpha(v_r)\rho(v_r)$ lie
in $\rho(v^\#)$ and commute with both $b$ and $c$. In particular,
$\beta(v^\#)$ is empty. From the preceding paragraph we know that
$\widehat{v_pv_q}$ is geodesic, but there are a number of possible situations
in which $\widehat{v^\#}$ is not geodesic:
\begin{mylist}
\item[(i)] $v_pv_q$ ends in a letter $c^{\pm 1}$ that cancels a letter
with name $c$ in $\alpha(v_r)$;
\item[(ii)] $v_pv_q$ ends in a letter $b^{\pm 1}$ that cancels with
the first letter of $b^I$;
\item[(iii)] $v_pv_q$ is P2G critical and $\tau(v_pv_q)$ ends in a letter
$c^{\pm 1}$ that cancels a letter with name $c$ in $\alpha(v_r)$;
\item[(iv)] $\widehat{v_pv_q\alpha(v_r)}$ is critical, and 
$\tau(\widehat{v_pv_q\alpha(v_r)})$ ends in a letter $b^\epsilon$ that cancels
with the first letter of $b^I$.
\end{mylist}
In fact in Case (i) the word $v$ has a prefix $v_0$, which is a prefix of
$v_{i-1}v_pv_q\alpha(v_r)$, such that $\widehat{v_0}$ is not geodesic,
and we have already dealt with that situation. In Cases (ii)--(iv) the
word $w$ has a shorter RRS, and the result follows by our choice of $m$.
For example, in Case (iv), we have the shorter RRS $v_1,v_2,\ldots,v_{i+1}$ of
length $i$, defined as follows. Let $(v_r)_p =v_av_b$, where $v_a$ is
its longest prefix of $(v_r)_p$ that ends with a letter with name $c$
(so $v_a$ is empty if there is no such letter). Then we set
$v_j:=u_j$ for $1 \le j < i-1$, $v_{i-1} := v_pv_qv_a$, the word $v_i$ a
P2G critical proper prefix of
$\l{\tau(\widehat{v_{i-1)}}}\beta(v_{i-1})v_b(v_r)_q(v_r)_s$,
and $v_{i+1}$ a subword of $v_r$ that commutes with $a$.
Note that $\beta(v_{i-1})$ is a (possibly zero) power $a$ coming from
$\alpha(v_r)$.  We leave Cases (ii) and (iii) to the reader.
So we may assume that $\widehat{v^\#}$ is geodesic when $v$ is P3G critical.

We complete the proof by showing that,
if we are not in any of the situations that we already considered,
then $v$ is critical of the same type as $u_i$ such that
$\beta(v)=\beta_{i}$ and, if $u_i$ is P2G, then
$l[\tau(\widehat{u_i})]=l[\tau(\hat{v})]$ while, if $u_i$ is P3G, then
$l[\tau(u_i^\#)]=l[\tau(v^\#)]$. So we can
combine $w_{i-1}$ and $w_i$ in the decomposition to obtain a shorter RRS $U'
=u_1',\ldots,u_{m-1}',u_{m}'$ of length $m-1$ with $u_j'=u_j$ for $j<i-1$,
$u_{i-1}' = u_{i-1}w_i$, and $u_j' = u_{j+1}$ for $i \le j \le m$.

Suppose first that $u_i$ is P2G critical.
Then, since $\hat{v}$ is geodesic and, by Lemma~\ref{lem:properties_optRRS},
$w_i$ does not contain any critical suffixes, we have
$$p(\hat{v})+n(\hat{v})=p(\widehat{u_i})+n(\widehat{u_i})=
p(\widehat{w_i})+n(\widehat{w_i})+1.$$ 
So either $p(\widehat{w_i})=p(\widehat{u_i})$ or $n(\widehat{w_i})=
n(\widehat{u_i})$. Assume without loss of generality that we are in the first case. If $\widehat{u_i}$ is positive then $\hat{v}$ also has to be positive and produce the same final letter. If $\widehat{u_i}$ is not positive, then it starts with a negative alternating sequence of length
$n(\widehat{u_i})=n(\widehat{w_i})+1$ and finishes with a positive alternating
sequence of length $p(\widehat{w_i})=p(\widehat{u_i})$.
This means that also $p(\hat{v})=p(\widehat{w_i})$, and $\hat{v}$
has to start with a negative alternating sequence of length $n(\widehat{u_i})$, so $v$ is P2G critical with
$\l{\tau(\hat{v})}=\l{\tau(\widehat{u_i})}$ as claimed.

Finally, suppose that $u_i$ is P3G critical.  Then, as we saw above, we may
assume that that the word $\widehat{v^\#}$ is geodesic.
Using a similar argument as in the preceding paragraph, we can show that
$p(v^\#) = p(u_i^\#)$ and $n(v^\#) = n(u_i^\#)$, so $v^\#$
is P2G critical as claimed, and $l[\tau(u_i^\#)]=l[\tau(v^\#)]$.
\end{proof}

\begin{definition}[The set $W$]
	\label{def:W}
We define $W$ to be the set of words that admit no RRS. 
\end{definition}
As we already observed, a word that is not
freely reduced admits an RRS of length $0$ with $w_{m+1} = \emptyword$,
so all words in $W$ are freely reduced. 
Since a word admitting an RRS is transformed to a freely reducing word by its 
application, any geodesic word must be in $W$. 

We describe as Procedure~\ref{proc:unique_optRRS} (below) a linear time procedure that, given a word $w \in W$ and 
$x \in A$, attempts to construct an optimal RRS that transforms $wx$ to a word
ending in $x^{-1}x$.
The procedure either returns a sequence of words $u_i$ and associated factorisation of
$wx$ defining an optimal RRS, or it returns \fail, in which
case $wx$ is proved to be in $W$.
We prove correctness of this procedure in Proposition~\ref{prop:unique_optRRS},
which follows the procedure.

The procedure has two principal parts. In the first part, either the message \fail\ is
returned, or the data associated with a putative (unique optimal)
RRS $U$ for $wx$ is computed.
In the second part, if \fail\ was not returned, then it is checked whether
the putative RRS $U$ really is an RRS for $wx$. If so, then the data
associated with $U$ are returned, but otherwise \fail.

The first part contains $m+1$ steps and constructs the
putative decomposition of $w$ from right to left; 
note that the value of $m$ is identified only when the $(m+1)$-st step is recognised as the final one of the first part.
This part attempts to identify a sequence 
of subwords $w_i$ of $w$, associated with critical words $u_i$,
for $i=m+1,m,\ldots,1$, in that order. 
Given the location of the right-hand end of $w_i$ and the
criticality type of $u_i$, the ($m+2-i$)-th step checks if $i=1$. 
If so then it attempts to locate the left-hand end of $u_1$, but otherwise it
attempts to locate the right-hand end of $w_{i-1}$ and determine the criticality type of $u_{i-1}$.

Before describing the procedure, we set up notation that will be used in the 
description of Step $m+2-i$ of part 1, when $i\leq m$ (that is, after Step 1). At the start of that step, the procedure has 
identified the right-hand end of the subword $w_i$ of $w$ (in a putative 
decomposition), the criticality type of $u_i$ and associated set $P_i$ of primary pseudo-generators for $u_i$,
and the criticality type of $u_{i-1}$. 

\begin{definition}[Notation for Step $m+2-i$]\label{def:step_notation}
We denote by $\vv_i$ the prefix of $w$ that ends at the end of $w_i$,
and by $s,t$ the elements of the set $P_i$
	(to be separately identified within the procedure). 
	If $u_i$ is a P3G-critical word, we suppose that it has type $(a',b',c')$,
	and hence that $\{s,t\}=\{b',c'\}$

During the step, a letter must be located at a
particular position within the prefix $\vv_i$ of $w$; we call that letter the
\emph{distinguished letter}, and denote it by $\disting$;
the precise definition of $\disting$ depends on the case and 
subcase within the step,
The letter $\disting$ always has the property that its name does not lie in $P_i$
and it commutes with at most one element of $P_i$. If it commutes with only
one such generator, then that generator is called $s$.

Particular letters are examined within the step that are found to the left and right of
$\disting$, which are called its \emph{left} and 
\emph{right quasi-neighbours}, and denoted by $\leftnbr$ and $\rightnbr$. 
The letter $\rightnbr$ is defined to be the closest letter to $\disting$ on its
right that has name in $P_i$. The letter $\leftnbr$ is defined, in the case
where $\disting$ commutes with $s$, to be the closest letter to $\disting$ on
its left that either has name $s$ or does not commute with $s$;
otherwise $\leftnbr$ is undefined.
\end{definition}

\begin{procedure}\label{proc:unique_optRRS}
Search a word for an optimal RRS.
\begin{mylist}
\item[{\bf Input:}] $w \in W$, $x \in A$. 
\item[{\bf Output:}] Either the message \fail\ (when $wx \in W$), or the words $u_i$ and the
decomposition $\mu w_1 \cdots w_mw_{m+1}x$ associated with the unique
optimal RRS for $wx$, together with the criticality types of the words $u_i$.
\end{mylist}

\smallskip
{\bf Step 1:} 

\begin{myproclist}
\item Identify $w_{m+1}'$ as the longest suffix of $w$ that 
commutes with $x$ and does not contain $x^{-1}$.
\item If $w_{m+1}'$ either contains $x$ or is the whole of $w$,
	then {\bf return \fail}.
\item{ Check if $m=0$:} if the letter immediately to the left of the
suffix $w_{m+1}'$ within $w$ is $x^{-1}$, then deduce that $m=0$, define
$u_1:=w_1 := x^{-1}w_{m+1}'$, return the associated
decomposition $w = \mu w_1 x$, and \stop.
\item Deduce that $m>0$. Define $w_{m+1}:=w_{m+1}'$,
and identify the location of the right-hand end of $w_m$ as the position within
$w$ immediately to the left of $w_{m+1}$.
Let  $s,t$ be the names of $x$ and the last letter in $w_m$, respectively.
\item Identify the criticality type of $u_m$ as P2G, $P_m$ as $\{s,t\}$.
\end{myproclist}

	Now {\bf for each of $\mathbf{ i:=m,\ldots,1}$}, given the location of the right-hand end of
$w_i$ and the criticality type of $u_i$, found in the previous step, 
	{\bf perform Step $\mathbf{m+2-i}$} (described below).

If during this step, the procedure fails either to prove that $i=1$ or
to find the letter $\disting$ or $\leftnbr{}$ (or in some cases letters to the
left of $\leftnbr{}$), as required, before 
	the left-hand end of $w$ is reached, then {\bf return \fail}.

\item 
	{\bf Step $\mathbf{m+2-i}$:} 
If $u_i$ has criticality type P2G then select case 1, otherwise select case 2.

\smallskip
{\bf Case 1 of Step $\mathbf{m+2-i}$:} the word $u_i$ is P2G-critical,
and $P_i := \{s,t\}$.
\begin{myproclist}
\item
Move leftwards from the right-hand end of $\vv_i$.
and select the appropriate one of seven subcases (a)--(g).\\
If, before letters with both names in $P_i$ have been seen, a letter
not in $P_i$ is seen that does not commute with $\l{\vv_i}$ but does commute 
with the other pseudo-generator in $P_i$, then identify this letter as 
the distinguished letter $\disting$ and select {\bf Subcase (a):}
\smallskip
\begin{myproclist} \item
	Identify the generator $t$ as the name of $\l{\vv_i}$, and $s$ as the name of
the other pseudo-generator.
Deduce that $i>1$, that the location of the right-hand end of
$w_{i-1}$ is at the distinguished letter, and that $u_{i-1}$ is P2G-critical,
	with $P_{i-1}$ consisting of $t$ and the name of $\disting$. 
	Continue to the next step.
\end{myproclist}
\smallskip
\item Otherwise, after seeing letters with both names in $P_i$,
	define $\disting$ to be the first letter found moving leftwards within 
	$\vv_i$ from this point onwards,
that does not lie in $P_i$ and commutes with at most
one element of $P_i$.
If, before the letter $\disting$ is found,
a suffix of $\vv_i$ is found that is P2G-critical of type $\{s,t\}$, then
	select {\bf Subcase (b):}
	\smallskip
	\begin{myproclist}
	\item Deduce that $i=1$, identify $u_1=v_1$ as that
P2G-critical suffix, proceed to {\bf Checking}.
	\end{myproclist}
	\smallskip
\item If $\disting$ commutes with neither element of $P_i$, then select {\bf Subcase (c): }
	\smallskip
\begin{myproclist}
\item Deduce that $i>1$, and 
identify the location of the right-hand end of $w_{i-1}$ at the position
of $\disting$.  Deduce that $u_{i-1}$ is P2G-critical,
	and identify the elements of $P_{i-1}$
as the name of $\disting$ together with whichever of $s,t$ is not the name
of $\rightnbr$.
Continue to the next step.
\end{myproclist}
\smallskip
\item
Otherwise, identify $s$ as the generator in $P_i$ that commutes with $\disting$,
$t$ as the other generator in $P_i$.
	Define $a$ to be the name of $\disting$, $b:=t$ and $c:=s$.
If $\rightnbr$ has name $s=c$ then proceed to {\bf Subcase (f/g)} below.
\item
Otherwise ($\rightnbr$ has name $t=b$),
if $\leftnbr$ has name $s$, and the suffix $u$ of $\vv_i$ beginning at the
position of $\leftnbr$ is P2G-critical of type $\{s,t\}$, then select
{\bf Subcase (d):} 
\smallskip
\begin{myproclist}
\item Deduce that $i=1$, define $u_1=w_1:= u$, and proceed to {\bf Checking}.
\end{myproclist}
\smallskip
\item
If $\leftnbr$ has name $s=c$, and the suffix $u$ of $\vv_i$ beginning at the
position of $\leftnbr$ is not P2G-critical of type $\{s,t\}$, then
proceed to {\bf Subcase (f/g)} below.
\item
Otherwise ($\leftnbr$ does not commute with $s=c$),
let $d_L$ be the name of
$\leftnbr$.
If $d_L \ne b$, $m_{ab}\ne 3$, or $m_{bc} < 5$ then 
	select {\bf Subcase (e):}
	\smallskip
	\begin{myproclist}
	\item Deduce that $i>1$, that the letter $\leftnbr$ is the rightmost 
		letter of $u_{i-1}$, and that $u_{i-1}$
		is P2G-critical  $P_{i-1}=\{d_L,s\}$.
Continue to the next step.
	\end{myproclist}
\smallskip
\item
	Otherwise
continue to read leftwards in $\vv_i$ past the letter $\leftnbr$. 
Look for a letter with name $c$ and then continue
leftwards looking for a letter with name $b$. 
If, before these two letters are seen, a letter is seen with name $a$, 
	then select {\bf Subcase (g)} below, but otherwise
	select {\bf Subcase (e)} (above).
\item	
{\bf Subcase (f/g)}: (the purpose of this subcase is to describe and distinguish 
between the two Subcases (f) and (g)).
\smallskip
\begin{myproclist}
\item  If $m_{bc}<5$, or $m_{ab}>3$, or $\vv_{i-1}$ has no P2G-critical
suffix of type $\{a,b\}$, then
	select {\bf Subcase (f):}
	\smallskip
	  \begin{myproclist}
	  \item Deduce that $i>1$, that the letter $\disting$ is the rightmost 
          letter of $u_{i-1}$, and that $u_{i-1}$
          is P2G-critical of type $\{a,b\}$.  
          Continue to the next step.
	  \end{myproclist}
\item  Otherwise,
	deduce that the letter $\disting$ is the rightmost letter of $u_{i-1}$,
	let $u'_1$ be the shortest P2G-critical suffix of
$\vv_{i-1}$ of
type $\{a,b\}$, and define $u_2' := \l{\tau(\widehat{u_1'})} \beta(u_1') w_i$.
	\item If $u_2'$ is P2G-critical of type $\{b,c\}=\{s,t\}$,
		then identify $i$ as $2$,
         $u_1=w_1$ as the subword $u'_1$ of $w$,
		$u_{i-1}=u_1=u'_1$ as P2G of type $\{a,b\}$, $P_{i-1}=\{a,b\}$ (so we are again in Subcase (f)),
	and proceed to {\bf Checking}.
        \item  Otherwise select {\bf Subcase (g):}
		\smallskip
	  \begin{myproclist}
	  \item Identify the criticality type of $u_{i-1}$ as P3G of type 
		  $(a,b,c)$, $P_{i-1}=\{b,c\}$.
          Continue to the next step.
          \end{myproclist}
\end{myproclist} 
\end{myproclist} 
{\bf Case 2 of Step $\mathbf{m+2-i}$:} the word $u_i$ is $(a',b',c')$-critical,
and $P_i := \{s,t\} = \{b',c'\}$
\begin{myproclist}
\item
Move leftwards from the rightmost
letter of $\vv_i$, look first for a letter with name $b'$, then for a letter
with name $c'$, then subsequently for another letter with name $b'$,
	and select the appropriate one of seven subcases (a)--(g).\\
If, before finding the third letter of the three letters
with names $b'$, $c'$ and $b'$, a letter $a$ is found with $a \ne b'$,
	$m_{b'a}=2$ and $m_{c'a}=3$, then select {\bf Subcase (a)}: 
	\smallskip
	\begin{myproclist}
	\item
	 Deduce that $i>1$. Identify the
location of this letter $a$ as the right-hand end of $w_{i-1}$, and the type
of $u_{i-1}$ as $(a,b,c)$-critical, where $b=c'$ and $c=b'$.
Continue to the next step.
	\end{myproclist}
\item
Otherwise, after finding the third of the specified letters, which has name
$b'$, continue to move leftwards, and look for the distinguished letter,
$\disting$, defined to be the first letter encountered that has
name outside $P_i$ and commutes with at most one generator in  $P_i$.
\item
As in Case 1 if, before finding $\disting$, a suffix of $\vv_i$ is found that
	is P3G-critical of type $(a',b',c')$, then select {\bf Subcase (b):}
	\smallskip
	\begin{myproclist}
	\item Deduce that $i=1$, identify $u_1=v_1$ as that P3G-critical suffix, proceed to {\bf Checking}.
	\end{myproclist}
\item
Otherwise continue exactly as in Case 1 of this step, with five further
subcases, (c)--(g) (except that Subcase (d) tests whether the suffix $u$
of $\vv_i$ of P3G-critical of type $(a',b',c')$ rather than P2G-critical of
	type $\{s,t\}$)).
\end{myproclist}

\medskip
{\bf Checking:} the part of the procedure checks whether the putative RRS $U$ associated with the
decomposition that has been found
really is an RRS for $wx$. Note that the check is unnecessary when $m=0$.

\begin{myproclist}
\item Define $u_1:= w_1$.
\item	For $i:=2,\ldots,m$, do the following:
  \item \begin{myproclist}
       \item If $u_{i-1}$ is a P2G-critical word,  
	   then $u_i := \l{\tau(\widehat{u_{i-1}})} \beta(u_{i-1}) w_i$;
   \item If $u_{i-1}$ is an $(a,b,c)$-critical word, then
    $u_i: = c^{\ee} b^{\kk}  \beta(u_{i-1}) w_i$,
    using the notation of Definition~\ref{def:P3G_tau}.
    \item If $u_i$ is not critical of the specified type or $i=m$ and
$\l{\tau(\widehat{u_i})} \ne x^{-1}$, then return \fail.
  \end{myproclist}
  \item Return the data associated with $U$.
\end{myproclist}
	
\end{procedure}

\begin{proposition}\label{prop:unique_optRRS}
Let $w \in W$ and $x \in X$.
Then Procedure~\ref{proc:unique_optRRS} with input $w$ and $x$
runs in linear time.
If $wx \not \in W$, then the procedure computes and outputs the decomposition
$\mu w_1 \cdots w_mw_{m+1}x$ of $w$ associated with the unique optimal RRS
$U=u_1,\ldots,u_m,u_{m+1}$ for the reduction of $wx$.
If $wx \in W$, then the procedure returns \fail, either because it fails
to compute a putative RRS $U$, or because $U$ fails the checking process.
\end{proposition}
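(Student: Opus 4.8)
The plan is to establish the three assertions — linear running time, correctness when $wx\notin W$, and correct failure when $wx\in W$ — by an argument paralleling that for the corresponding result in \cite{BCMW}, but with the extra bookkeeping needed for P3G-critical words.

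First I would bound the running time. Part~1 of the procedure is a single leftward scan of $w$: Step~1 locates $w_{m+1}'$ and (if $m>0$) the right-hand end of $w_m$, and each subsequent Step $m+2-i$ advances the cursor leftward from the right-hand end of $w_i$ to that of $w_{i-1}$, or to the left-hand end of $u_1$. Most operations within a step — reading names, testing commutation relations, locating $\disting$, $\leftnbr$, $\rightnbr$ — cost $O(1)$ per letter scanned. The only potentially expensive operations are the tests of whether a suffix of $\vv_i$ (or of $\vv_{i-1}$) is P2G- or P3G-critical, and the search in Subcase (f/g) for the shortest P2G-critical suffix of $\vv_{i-1}$ of type $\{a,b\}$; by Lemmas~\ref{lem:critlintest} and~\ref{lem:abcrit2} each runs in time linear in the length of the suffix examined, and by Lemma~\ref{lem:RRSdetails}(iv) and the analysis around it each examined suffix begins within $w_{i-1}$ or $w_i$, so the regions re-examined in successive steps overlap in at most a bounded number of syllables; hence Part~1 costs $O(|w|)$. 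Part~2 (Checking) runs in linear time by Lemma~\ref{lem:RRScheck}, since each $u_i$ is computed from $u_{i-1}$ and $w_i$ in time $O(|u_i|)$ and checked in time $O(|u_i|)$, and since $u_i$ and $u_{i+1}$ overlap only in a suffix of $\tau(u_i)$ of bounded syllable length — using that $\beta_i$ has at most two syllables, Lemma~\ref{lem:RRSdetails} — we have $\sum_i|u_i| = O(|w|)$.

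Next I would prove that Part~1 constructs a unique putative decomposition, by downward induction on $i$: at the start of Step $m+2-i$ the right-hand end of $w_i$, the criticality type of $u_i$ and the set $P_i$ agree with the corresponding data of every optimal RRS for $wx$ (should one exist). For the base case, Step~1: by optimality condition (ii) of Definition~\ref{def:optRRS} we may take $\gamma=x$, so $w_{m+1}$ is forced to be the longest suffix commuting with $x$ and avoiding $x^{-1}$; by Lemma~\ref{lem:RRSdetails}(i), when $m>0$ the word $u_m$ is P2G-critical with $\beta_m=\emptyword$ and $\l{\tau(\widehat{u_m})}=x^{-1}$, which forces the right-hand end of $w_m$ and the set $P_m$ exactly as computed. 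For the inductive step, given $u_i$ of the determined type and $P_i=\{s,t\}$, Definition~\ref{def:RRS} gives $u_i = \l{\tau(\widehat{u_{i-1}})}\beta_{i-1}w_i$ or $u_i = c^{\ee}b^{\kk}\beta_{i-1}w_i$; combining this with Lemma~\ref{lem:RRSdetails}(ii)--(iv), Corollary~\ref{cor:RRSdetails2} and the optimality properties of Lemma~\ref{lem:properties_optRRS}, I would verify that exactly one of the seven subcases (a)--(g) in Case~1 or Case~2 can occur, that the procedure's tests on $\disting$, $\leftnbr$, $\rightnbr$ together with the auxiliary criticality checks correctly recognise which, and that in each subcase the deduced right-hand end of $w_{i-1}$ and criticality type of $u_{i-1}$ (with $P_{i-1}$) are forced. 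Subcases (b) and (d) are where $i=1$ is detected and $u_1=w_1$ identified as a critical suffix of $\vv_1$. If at some step no subcase applies before the left-hand end of $w$ is reached, then no optimal RRS agreeing with the computed data exists, hence by the induction hypothesis none exists at all, and \fail\ is correctly returned.

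Finally I would handle the Checking part and assemble the statement. The deductions of Part~1 used only necessary conditions, so it remains to confirm that the reconstructed words $u_i$ really are critical of the asserted types and that $\l{\tau(\widehat{u_m})}=x^{-1}$; this is exactly what Part~2 verifies, returning \fail\ otherwise. If $wx\notin W$ then $wx$ admits an optimal RRS by Proposition~\ref{prop:exists_optRRS}; by the induction above this RRS coincides with the data computed in Part~1, so Part~2 succeeds and outputs it, and uniqueness of the optimal RRS follows since any optimal RRS must coincide with the computed data. If $wx\in W$ then $wx$ admits no RRS, so either Part~1 returns \fail\ or the putative RRS it produces is not an RRS for $wx$ and Part~2 returns \fail. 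The hard part will be the inductive step — showing that Subcases (a)--(g) in each of Case~1 and Case~2 are exhaustive and are distinguished exactly by the procedure's tests — which rests on a careful examination of the syllable structure of $u_i$, in particular the interaction between $\widehat{(u_i)^\#}$ having at least $m_{bc}\ge 5$ syllables and the restricted forms of $\beta_{i-1}$ given by Lemma~\ref{lem:RRSdetails}.
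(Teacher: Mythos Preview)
Your plan is correct and follows essentially the same route as the paper: both argue linear time from Lemmas~\ref{lem:critlintest}, \ref{lem:abcrit2} and~\ref{lem:RRScheck} (the paper, like you, waves hands on the potential re-scanning in Subcase~(f/g)), both dispose of the $wx\in W$ case by observing that the Checking phase guarantees correctness of any returned output, and both prove the $wx\notin W$ case by fixing an optimal RRS via Proposition~\ref{prop:exists_optRRS} and verifying by downward induction on $i$ that each Step $m+2-i$ is forced to recover its data, with uniqueness falling out since the procedure is deterministic. The paper's proof is almost entirely the detailed execution of the inductive step you flag as ``the hard part'': a subcase-by-subcase verification (using exactly Lemma~\ref{lem:RRSdetails}, Corollary~\ref{cor:RRSdetails2}, and Lemma~\ref{lem:properties_optRRS}) that the tests on $\disting$, $\leftnbr$, $\rightnbr$ correctly distinguish (a)--(g), with the most delicate point being the disambiguation between Subcases~(e), (f) and~(g), which requires looking further left past $\leftnbr$ and ruling out spurious configurations using $m_{bc}\ge 5$.
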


\begin{proof}
It follows essentially from Lemmas~\ref{lem:critlintest},~\ref{lem:abcrit2}
and~\ref{lem:RRScheck} that the procedure executes in linear time.
It is not clear from the description above of Procedure~\ref{proc:unique_optRRS}
that Subcase (f/g) does not violate the linear time of the procedure
in the situation when the search for a P2G-critical suffix of $p_{i-1}$ of
type $\{a,b\}$ has to be carried out in many different steps of the procedure,
but with some care this can be sorted; we have chosen not to go into the
details that would sort this. 

We consider that it is clear from its description that the checking part of the
procedure is accurate.
So if it returns a decomposition, then this is certainly correct and
hence $wx \not\in W$. In other words, if $wx \in W$ then the procedure
correctly returns \fail.

So we may assume for the remainder of this proof that $wx \not \in W$ and
so, by Proposition~\ref{prop:exists_optRRS}, $wx$ has a decomposition
$\mu w_1 \cdots w_mw_{m+1}x$ associated with an optimal RRS
$U=u_1,\ldots,u_{m+1}$.
We shall prove below that, given such $w$ and $x$,
Procedure~\ref{proc:unique_optRRS} must complete without failing, and return
the selected decomposition and optimal RRS.
Since the same optimal RRS decomposition for $wx$ will be returned by the
procedure whichever decomposition is actually selected within this proposition,
it will follow that $wx$ has a unique optimal RRS.

We consider the $m+1$ steps of the first part of the procedure in the order in
which they are applied; we note that the value of $m$ will be determined
during the procedure, but is not known at its start.

The purpose of Step 1 of the procedure is to locate the subword $w_{m+1}$
within $w$
and check whether $m=0$. If $m>0$, then Step 1
determines the type of the critical word $u_m$.

For $i=m,\ldots,1$, the $(m+2-i)$-th step, given the location of the right-hand
end of $w_i$ within $w$ and the type of $u_i$, should determine whether
$i=1$ and, if so, determine the location of the left-hand end of $w_i=w_1$.
If $i>1$, then it should determine the location of the right-hand end of
$w_{i-1}$ as well as the criticality type of the word $u_{i-1}$.

We recall the notations of distinguished letter $\disting$,
of its nearest differently named neighbours $\leftnbr$ and $\rightnbr$,
	and of the prefixes $\vv_i$ of $w$ from Definition~\ref{def:step_notation}, and the definitions of $\vv_i$, $\leftnbr$ and $\rightnbr$  (relative to
$\disting$) also from there. But we note that the definition of 
$\disting$ must be taken from the description of the procedure, since it 
varies according to the subcase within the step.

\smallskip
{\bf Correctness proof for Step 1.} 

The fact that, in any factorisation associated with an optimal RRS, 
$w_{m+1}$ is as defined by the procedure
follows from Definitions~\ref{def:RRS} and ~\ref{def:optRRS}\,(ii).

If the letter immediately to the left of the suffix $w_{m+1}$ within $w$ is
$x^{-1}$, then it follows from Definition~\ref{def:RRS} that $wx$ has an RRS
with $m=0$, which is certainly optimal (we cannot start further right).
So $m=0$ for our chosen sequence, and the procedure correctly verifies that.
Otherwise, by definition we do not have an RRS of length $0$, and the procedure 
correctly verifies that.

So now we suppose that $m>0$.
We know that the last letter of $w_m$ must be found immediately before the
first letter of $w_{m+1}$, as located by the procedure,
also that $\l{w_m}$ must have name distinct from $x$. 

By Lemma~\ref{lem:RRSdetails}\,(i) $u_m$ cannot be P3G-critical.
So it must be P2G-critical and (using Lemma~\ref{lem:P2G_taufacts})
its pseudo-generators must be the names of $\l{u_m}$ and
$\l{\tau(\widehat{u_m})}$.
But $\beta_m=\emptyword$ by Lemma~\ref{lem:RRSdetails}\,(i), so
$\l{\tau(\widehat{u_m})} = \l{\tau(u_m)} = \f{u_{m+1}} = x^{-1}$
(by Definition~\ref{def:RRS}), and so the elements $s,t$ of $P_m$
are correctly identified in Step 1 of the procedure.

\smallskip
{\bf Correctness proof for Step m+2-i.}

Next we prove the correctness of Step $m+2-i$, for each of $i=m,\ldots,1$. 
We assume (by induction) that the procedure enters the step having
correctly located the right-hand end of $w_i$ (and of $\vv_i$)
and the type of the critical word $u_i$.

There are two cases to consider for each such step, in which $u_i$ is
P2G-critical of type $\{s,t\}$, or P3G-critical of type $(a',b',c')$.
Each of the two cases splits into several subcases.
Throughout the following proof we make frequent use when $i>1$ of the
definition of the word $u_i$ as specified in Definition~\ref{def:RRS},
and of the possibilities for the word $\beta_{i-1}$ given in
Lemma~\ref{lem:RRSdetails}\,(ii) and (iii).

\smallskip
{\bf Case 1 of Step $\mathbf{m+2-i}$:} the word $u_i$ is P2G-critical of type
$\{s,t\}$.

Suppose first that the suffix $w_i$ of $u_i$ contains (a power of) only one of
the two generators in $P_i$. That generator must be
the name of $\l{\vv_i} = \l{u_i}$; we denote it by $t$. 
This situation is clearly possible only when $i>1$ ($u_i$ must be 2-generated) and,
by considering the restrictions imposed by Lemma~\ref{lem:RRSdetails}\,(ii) and
(iii) on the prefix of $u_i$ that does not lie in $w_i$, we see that this could
happen only when (1) $m_{st} = 3$;
(2) $u_{i-1}$ is P2G-critical with
$P_{i-1}$ consisting of $t$ and the name of $\l{u_{i-1}}$;
and
(3) $\f{\beta_{i-1}}$ (which commutes with $\l{u_{i-1}}$) has name $s$.

In that case $\l{w_{i-1}}=\l{u_{i-1}}$ does not commute with $t$
but, by Definition~\ref{def:P2G}, $w_i$ must be a suffix of $(u_i)_s$, and
contain no internal letter that does not commute with $\l{w_i}$.
So we correctly locate $\l{w_{i-1}}$ as the distinguished letter $\disting$, as that was defined 
at the top of Subcase (a). It then follows from (2) that the type of $u_{i-1}$ and the set
$P_{i-1}$ are also exactly as found by the procedure.

Otherwise $w_i$ must contain letters with the names of both elements of $P_i$,
and the procedure will not find $\disting$ as defined at the top of 
Subcase (a), and so will use its subsequent (re)definition
as the first letter with name not in $P_i$ that is
encountered moving leftward within $\vv_i$ after letters with both names in $P_i$ have been seen, and which
commutes with at most one of the elements of $P_i$.
Suppose from now until the end of Case 1 that $\disting$ is defined in this way.

By Lemma~\ref{lem:properties_optRRS}\,(i) we have $i=1$ if and only if
$\vv_i$ has a critical suffix that is P2G-critical of type $\{s,t\}$. We may
find such a suffix before we find the letter $\disting$, and then we are in 
Subcase (b), and this step of the procedure completes successfully.
As we shall see, when $i=1$ we may also find $\disting$ within $\alpha_1$.

From now until the end of Case 1, suppose that we are not in Subcase (b).

We claim that, provided that $i>1$, 
the procedure will find $\disting$ either
as a letter of $\alpha_i$ within $w_i$ or as $\l{w_{i-1}}$, and if $i=1$ (since we are not in Subcase (b)),
it must find $\disting$ within $\alpha_1$.
To see this,
suppose first that $\disting$ is found within the subword $w_i$. In that case,
$\disting$ can also be found within $u_i$ (which contains $w_i$ as a suffix, is equal to $w_i$ when $i=1$)
as an internal generator.
It must be to the left of the suffix $(u_i)_s$ of $u_i$, which contains only 
one generator of $P_i$, so cannot be in $\beta_i$, cannot be in $\rho_i$ (by 
definition), and so must be in $\alpha_i$.
Conversely, if moving leftwards through $u_i$ (rather than $\vv_i$),  a 
letter satisfying the conditions on $\disting$ were found within $\alpha_i$, then $\alpha_i$ would be
in $w_i$ by Corollary~\ref{cor:RRSdetails2}(ii), (and hence in $\vv_i$),  and so the procedure would
identify that letter as $\disting$; in that case $\disting$ would commute with $\f{u_i}$.
So the procedure will find $\disting$ within $w_i$ precisely when $\disting$ 
is within $\alpha_i$.
If $i=1$, then, since we are not in Subcase (b), the procedure must find $\disting$ within $w_1$, and now since $w_1=u_1$, we deduce as above that in this case
$\disting$ must be found within $\alpha_1$.

Now suppose that $i>1$.
If
$\l{w_{i-1}}\in P_i$, then $u_{i-1}$ must be P2G-critical of type $\{s,t\}$. 
For if $u_{i-1}$ is P3G-critical of type $(a,b,c)$ then by 
Definition~\ref{def:P3G} $\l{u_{i-1}}$ has name $a$,
but by Lemma~\ref{lem:RRSdetails} (iii) $P_i=P_{i-1}=\{b,c\}$, 
so $a \not \in P_i$.
Further, in this case, both $\l{u_{i-1}}=\l{w_{i-1}}$ and $\f{w_i}=\l{\tau(u_{i-1})}$ are within $P_i\cap P_{i-1}$, so $P_{i-1}=P_i=\{s,t\}$.
Then, by Definition~\ref{def:optRRS}\,(iii), $\alpha_i$ must contain at least
one letter
that does not commute with both $s$ and $t$, and the rightmost such letter will be found as $\disting$.
On the other hand, if  $\l{u_{i-1}} = \l{w_{i-1}}$ is not in $P_i$, then it
cannot commute with both $s$ and $t$ by Corollary~\ref{cor:RRSdetails2}\,(i).
So in this case, if $\disting$ is not found within $w_i$, then it
will be found as $\l{w_{i-1}}$.
This completes the proof of the claim.

From now until the end of Case 1, we may assume that we find the letter $\disting$.

Suppose first that $\disting$ commutes with neither $s$ nor $t$. Then 
$\disting$ is not within $w_i$, and must be 
$l[w_{i-1}]=l[u_{i-1}]$.
Then $u_{i-1}$ needs to be P2G-critical (or, by Lemma~\ref{lem:RRSdetails} (iii), we 
would have $P_{i-1}=P_i$ and by Definition~\ref{def:P3G} the final letter of $u_{i-1}$ must commute with 
one of the elements of $P_{i-1}$, and hence with one of $s,t$) and the name of $\disting$  is in $P_{i-1}$.
If $\beta_{i-1} \neq \emptyword$, then
we know by Lemma~\ref{lem:RRSdetails}\,(ii) that the name of $f[\beta_{i-1}]$ is $s$ or $t$, but by Definition~\ref{def:P2Gabr} $\beta_{i-1}$ must commute with 
$\l{u_{i-1}}=\disting$, and $\disting$ commutes with neither $s$ nor $t$; so we must have
$\beta_{i-1} = \emptyword$. Then by Definition~\ref{def:RRS},
$u_i = \l{\tau(\widehat{u_{i-1}})}  w_i$, and the second element of $P_{i-1}$
is $\f{u_i} \in P_i$.
Now by Lemma~\ref{lem:properties_optRRS}\,(ii), 
the first letter of $u_i$ to the right of $\f{u_i}$ that is not 
within $\alpha_i$ has name distinct from $\f{u_i}$. By 
Definition~\ref{def:P2Gabr}, there are no further letters of $\alpha_i$ to 
the right of this letter, and hence  it must be to the right of $\disting$, and be $\rightnbr$; it follows that the second element of $P_{i-1}$ must be
whichever of $s$ and $t$ is not the name of $\rightnbr$. So we are in
Subcase (c) of the procedure, and 
this step of the procedure completes successfully.

Otherwise $\disting$ commutes with $s$ but not with $t$ (by Definition~\ref{def:step_notation}).
We assume this for the remainder of our consideration of Case 1.
As in the procedure, we let $a$ be the name of $\disting$ and define
$b:=t$ and $c:=s$.
Before proceeding further,
we make some observations 
that we shall use repeatedly below, also valid until the remainder of Case (1).
Recall from above that either (A) $\disting$ is the rightmost letter of 
$w_{i-1}=u_{i-1}$ (in which case $i>1$) or (B) $\disting$ is within $\alpha_i$.
We make the following observations about these two cases.

In Case (A),
the generator $a$ is a
pseudo-generator of $u_{i-1}$. Further,
	if $u_{i-1}$ is P2G-critical then its
other pseudo-generator is in $P_i$ and does not commute with $a$, hence
must be $t=b$, and we are in Subcase (f). 
	while if $u_{i-1}$ is
P3G-critical then it must have type $(a,b,c)$, and we are in Subcase (g).

In Case (B), as we saw earlier,
the letter $\f{u_i}$ commutes with $\disting$ and so has name $s$. Furthermore, since $\alpha_i$ is
non-empty in this situation, it follows from
Lemma~\ref{lem:RRSdetails}\,(ii) and~(iii) that, if $i>1$, then $u_{i-1}$ is
P2G-critical with $\beta_{i-1} = \emptyword$, and then by Lemma~\ref{lem:properties_optRRS} $\alpha_i$ must be a prefix of $w_i$.

Now, suppose first that $\rightnbr$ has name $s=c$.
In that case, $\disting$ cannot be in $\alpha_i$ 
For if it were,
by (B) above we would have 
$\f{u_i}=s^{\pm 1}$, in which case
by Lemma~\ref{lem:properties_optRRS}\,(ii) the first letter after $\f{u_i}$ 
that is in $u_i$ but not in $\alpha_i$ could not have name $s$; but by Definition~\ref{def:step_notation}, $\rightnbr$
would be that letter.
It follows that $i>1$ and $\disting$ is $\l{w_{i-1}}$ in this case, and it follows from our observation in (A) above  we are in Subcase (f) or (g).

Otherwise $\rightnbr$ has name $t=b$. 

If $i=1$ 
then 
$\disting$ must be in $\alpha_i$, as in (B) above.
In that case the procedure finds $\leftnbr$ as the letter $\f{u_1}=\f{w_1}$, with
name $s$, and this step of the procedure completes successfully in Subcase (d).
So we may assume from now until the end of this case that $i>1$.

Now, at least one of
$s$ and $t$ must be a pseudo-generator of $u_{i-1}$ and so there must be a 
letter with name $s$ or $t$ to the left of $\disting$. Then, the procedure will
certainly find the letter $\leftnbr$, and we let $d_L$ be its name.

Suppose that $d_L=s$. If $\disting$ were in $\alpha_i$, 
then  $\f{u_i}=\l{\tau(\widehat{u_{i-1}})}$ would have name $s$ (by (B) above)
and so $\l{u_{i-1}}=\l{w_{i-1}}$ would have name distinct from $s$, so could not be $\leftnbr$ (as just claimed).
So $\disting$ must be $\l{w_{i-1}}$, and by (A) above, we are in Subcase (f) or (g) of the
procedure, which we consider further below.
Note that $\leftnbr$ is in $\beta_{i-1}$ in those subcases.

So now we suppose that $d_L$ does not commute with $s=c$. 
Suppose that $\disting$ 
is the rightmost letter of $w_{i-1}$, as in (A) above.
If $u_{i-1}$ is a P2G-critical word then (as we saw in our observations above) it has type
$\{a,b\}$, $\f{u_i}$ has name $b$,
and $\rightnbr$ must be the first letter in $w_i$ that has name $b$.
But now, 
by Lemma~\ref{lem:properties_optRRS}\,(ii), the first letter of $u_i$ after $\f{u_i}$ that is not in $\alpha_i$ cannot have name $b$, and so cannot  
be $\rightnbr$.
Hence
it follows from Definition~\ref{def:RRS} that 
$\beta_{i-1} \ne \emptyword$,
and since $d_L \ne c$, we must be in the second case of
Lemma~\ref{lem:RRSdetails}\,(ii) with $\beta_{i-1} = c^\ll d^\mm$ for some
non-zero $\ll$ and $\mm$. In that case we would have $d_L \ne b$,
$m_{ac}=m_{ad_L}=m_{bd_L}=2$, $m_{bc}=3$ and hence $m_{ab},m_{cd_L} \ge 5$
by the $A_3$,$B_3$-free hypothesis. But in fact that situation cannot arise at this
stage of the procedure, because we would be in Subcase (a), that is, we would have  encountered $\disting$ before encountering an instance of a letter with name $c\in P_i$, and this has
already been considered.

We conclude that $u_{i-1}$ cannot be P2G-critical.
So, by (A) above, $u_{i-1}$ is a P3G-critical word 
of type $(a,b,c)$ and we are in Subcase (g) with $d_L=b$, $m_{ab}=3$ and $m_{bc} \ge 5$. Notice that we still need to see that the procedure correctly gives this information, which we shall see further below. 

So now, still supposing that $d_L$ does not commute with $s=c$,
we suppose that
$\disting$ is in $\alpha_i$. Then, by (B),
the letter $\leftnbr$ with name $d_L$ is the rightmost letter of $w_{i-1}$,
and $u_{i-1}$ is P2G-critical of type $\{c,d_L\}$ with $\beta_{i-1} = \emptyword$.
We are in Subcase (e) in this situation.
(Note that this is the only situation in Case 1 in which $u_{i-1}$ and $u_i$
could possibly be P2G-critical of the same type $\{s,t\}$, but since the letter
$\disting$ in $\alpha_i$ does not commute with both $s$ and $t$, condition
(iii) of Definition~\ref{def:optRRS} (optimality of RRS) is not violated.) 

We are going to see now that the procedure correctly identifies the information given in the two previous paragraphs. If $d_L\neq b, m_{ab}\neq 3$ or $m_{bc}<5$ then we clearly are in the case (e). If $d_L=b$, $m_{ab}=3$ and
$m_{bc} \ge 5$, when we could be in Subcase (e) or (g). The procedure reads leftwards
in the word after $\leftnbr$, and looks first for a letter with name $a$ or $c$.
If a letter with name $a$ is seen first then, since $\beta_{i-1} = \emptyword$ in Subcase (e),
we must be in Subcase (g). But one with name $c$ is seen first then we may
still be in Subcase (g) with that letter $c$ in $\alpha((u_{i-1})_r)$. In
that case the procedure continues to read leftwards looking for a letter with name $a$
or $b$. In Subcase (g) the letter with name $a$ is seen, as
$\f{(u_{i-1})_r}$, before one with name $b$.  We claim that in Subcase (e)
one with name $b$ is seen first. To see this observe that, since $m_{bc} \ge 5$,
the only way that one with name $a$ could be seen first would be if that
letter named $a$ were $\l{u_{i-2}}$ with $i>2$ and $u_{i-2}$ P3G-critical
of type $(a,b,c)$. But in that case by Definition~\ref{def:RRS} we would
have $u_{i-1} =c^\ee b^\kk\beta_{i-2}w_{i-1}$ which, since $\beta_{i-2}$ would
be a power of $c$ and the suffix of $\widehat{u_{i-1}}$ in $w_{i-1}$ 
would be
a power of $c$ followed by a power of $b$, could not be $\{b,c\}$-critical.
So again the procedure correctly distinguishes between these two possibilities.

Finally, we need to prove that the procedure distinguishes between the cases (f) and (g) that we have not treated yet. 
Suppose that $m_{ab} >3$, or $m_{st}<5$, or $p_{i-1}$ has no P2G-critical suffix of
type $\{a,b\}$.
In Subcase (g), by definition we have $m_{ab}=3$ and $m_{bc}=m_{st}\geq 5$,
and by the final statement of Lemma~\ref{lem:RRSdetails}\,(iv)
applied to $u_{i-1}$, the word
$p_{i-1}$ has a suffix that is P2G-critical of type $\{a,b\}$.
Then $u_{i-1}$ must be P2G-critical, and
the procedure correctly verifies that we are in Subcase (f).

Otherwise, reading leftwards through the word $w$, starting at the position of
the rightmost letter of $p_{i-1}$, the procedure finds the shortest suffix
$u_1'$ of $p_{i-1}$ that is P2G-critical of type $\{a,b\}$.
Then either $u_{i-1}$ is P3G-critical of type $(a,b,c)$  or,
by Lemma~\ref{lem:properties_optRRS}\,(i), it is P2G-critical with $i-1 = 1$.
The procedure checks for the second of these two possibilities by applying
$\tau$ to $u_1'$, and checking whether this results in a critical word
$u_i = u_2'$ of the required type.
We claim that, if this word $u_2'$ is found to be critical, then we must be in
Subcase (f), as reported by the procedure.

Now the subword $w_i$ of $w$ has been determined, and in Subcases (f) and (g)
we would have $u_i = b^\epsilon \beta_{i-1} w_i$ and $u_i =
c^\ee b^\kk\beta_{i-1} w_i$, respectively, where $\beta_{i-1}=\beta(u_1')$
and $\epsilon = \pm 1$ has the same sign as $\kk$. Since, by
Lemma~\ref{lem:properties_optRRS}\,(i), the word $w_i$ cannot be critical
of the required type, it is easy to see that these two candidates for $u_i$
cannot both be critical of the required type.  So if the word $u_2'$ is
critical, then we cannot be in Subcase (g), and so we must be in Subcase (f).
This verifies the claim.

\smallskip
{\bf Case 2 of Step $\mathbf{m+2-i}$:} the word $u_i$ is P3G-critical of type
$(a',b',c')$.

We recall that, in this case, the procedure defines $\disting$ to be the first
letter that is encountered (moving leftwards within $\vv_i$) after letters with
names first $b'$, then $c'$, and then $b'$ have been seen, such that $\disting$
has name outside $P_i$ and commutes with at most one generator in $P_i$.

If $i=1$, then it is clear from Definition~\ref{def:P3G} of a P3G-critical word
that, after seeing letters with names $b'$ and $c'$, we cannot see a letter with
name $a \ne b'$ that does not commute with $c'$ before seeing another letter
with name $b'$, and so we cannot be in Subcase (a).

If $i>1$ then we apply Lemma~\ref{lem:RRSdetails}\,(iv) to see that, as we move leftwards in
$\vv_i$, we must see a letter with name $b'$ followed by a letter with name $c'$
within $w_i$,
and if we then  see $\l{w_{i-1}}$ before seeing another letter with name $b'$
then we must have $m_{b'c'}=5$ and $u_{i-1}$ P3G-critical of 
type $(a,c',b')$, where $a$ is the name of $\l{w_{i-1}} = \l{u_{i-1}}$
and $\beta_{i-1} = c^\ll (a')^\mm$.
Conversely, if, before seeing another letter with name $b'$, we see a letter
with name $a$ with $m_{ac'}=3$ and $m_{ab'}=2$, then this letter cannot be
an internal letter of $u_i$, so it must be $\l{w_{i-1}}$, and we are in
the situation described, which is Subcase (a) of the procedure.

Otherwise, after seeing letters with names $b'$ and $c'$, we see another letter
with name $b'$ within $w_i$. The letter with name $c'$ must either lie in
$\alpha((u_i)_r)$ or to the left of $(u_i)_r$ and, in either case this new
letter with name $b'$ cannot lie within $(u_i)_r$, so it must be in the prefix
$(u_i)_p(u_i)_q$ of $u_i$. This prefix is also a prefix of the P2G-critical
word $u_i^\#$. The correctness proofs for cases (b)--(f) of the procedure are
now the same as in Case 1, but applied to $u_i^\#$ rather than to $u_i$.

This completes the proof of Proposition~\ref{prop:unique_optRRS}.
\end{proof}

\section{Proof of the main theorem}
\label{sec:proofs}

We recall Definition~\ref{def:W} of $W$ as the set  of words admitting no RRS.
Our main theorem will be an immediate consequence of the following result,
which specifies the rewrite system $\cR$.
\begin{theorem}
\label{thm:main_details}
Let $G$ be an Artin group defined over its standard generating set $S$ for which the associated Coxeter diagram contains no $A_3$ or $B_3$ subdiagram,
and let $W$ be as defined above.
Then the words in $W$ are precisely the words that are geodesics in $G$.

There is a process that, given an input word $w:= x_1\cdots x_n$,
runs in quadratic time to find geodesic representatives $v_0,\cdots, v_n$ in
$W$ of successive prefixes of $w$, in each case by applying an RRS of
$\tau$-moves to the word $v_{i-1}x_i$, and hence finding a geodesic
representative $v_n$ for $w$, and thereby solving the word problem in $G$.

Furthermore, if $w,w' \in W$ represent the same element of $G$ then there is
a sequence of commutations and $\tau$-moves applied to critical 2-generator
subwords 
on non-commuting generators
that transforms $w$ to $w'$. 
\end{theorem}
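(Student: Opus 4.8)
The plan is to prove the statement by induction on some natural complexity measure of a path of moves between $w$ and $w'$, reducing everything to the 2-generator case where Lemma~\ref{lem:2gengeo} applies. Since $W$ is exactly the set of geodesic representatives (the first assertion of the theorem, which I may assume), both $w$ and $w'$ are geodesic words for a common element $g \in G$. The first step is to move from the \emph{general} $\tau$-moves (P2G and P3G) used to prove $W$ = geodesics to \emph{2-generator} $\tau$-moves only: recall from Definitions~\ref{def:P2G_tau} and~\ref{def:P3G_tau} that a P2G $\tau$-move is, up to commutations, a single 2-generator $\tau$-move on $\widehat{u}$ surrounded by the fixed words $\alpha,\rho,\beta$, and a P3G $\tau$-move decomposes (by Equation (1) and the computation preceding Definition~\ref{def:P3G_tau}) into a 2-generator $\tau$-move on $\widehat{u^\#}$ together with commutations. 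So any RRS-driven reduction of $vx$ to a shorter word is realised by a sequence of commutations and 2-generator $\tau$-moves, each involving just two non-commuting generators. Hence, by the constructive process in the second paragraph of the theorem, every $w \in W$ is connected to the shortlex-least geodesic representative of $g$ — call it $\hat g$ — by such a sequence. It therefore suffices to connect $w$ to $\hat g$ and $\hat g$ to $w'$, and the sequence from $w'$ is reversed (the reverse of a 2-generator $\tau$-move is again one, by Lemma~\ref{lem:2gen_taufacts} and the involutive nature of $\tau$); concatenating gives a sequence $w \to \cdots \to w'$.

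The one subtlety is that the process in the theorem reduces $v_{i-1}x_i$ to a geodesic representative in $W$ of the \emph{prefix} $x_1\cdots x_i$; applied with $w$ itself as the input word, it produces \emph{some} geodesic $v_n \in W$ for $g$, not necessarily $\hat g$. To pin down $\hat g$ I would instead invoke this: any geodesic word $v$ for $g$ that is \emph{not} shortlex-minimal must admit a length-preserving rewrite (a 2-generator $\tau$-move on a non-commuting pair, possibly after commutations) that strictly decreases its shortlex rank, because otherwise $v$ would already be shortlex-normal. This is the analogue for the present setting of \cite[Theorem~2.4]{HR12} used in the proof of Lemma~\ref{lem:2gengeo}; in fact one can argue it directly from the structure of $W$ by noting that the words $v$ and $\hat g$, being equal in $G$, differ; if no single such move lowers the shortlex rank of $v$ then one examines a minimal-length 2-generator subword where $v$ and some shortlex-smaller geodesic first disagree and applies Lemma~\ref{lem:2gengeo} locally. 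Iterating, $v$ is joined to $\hat g$ by a finite sequence of the allowed moves. The hard part will be making this iteration terminate cleanly and checking that the ``local'' application of Lemma~\ref{lem:2gengeo} to a 2-generator subword is legitimate — i.e. that a 2-generator subword of a geodesic word of $G$ is geodesic in the corresponding 2-generator parabolic (which follows from the fact that parabolic subgroups of Artin groups in our class are convex, a consequence of the results of \cite{MairesseMatheus} together with the commutation structure, or can be taken as known for the $\{x,y\}$-parabolic because $W$ restricts to geodesics there).

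Putting the pieces together: first I establish that every $\tau$-move appearing in an RRS is a composite of commutations and 2-generator $\tau$-moves on non-commuting pairs (immediate from the definitions in Section~\ref{sec:critical_tau}); second, I reduce the claim to showing each $w \in W$ is connected to $\hat g$ by such moves; third, I prove the latter by a shortlex-descent argument grounded in Lemma~\ref{lem:2gengeo} applied to 2-generator subwords; and finally I reverse the $\hat g$-to-$w'$ half and concatenate. I expect the termination/well-definedness of the shortlex descent — ensuring that a rewrite lowering the shortlex rank always exists when $v \neq \hat g$, and that the process does not cycle — to be the main obstacle, but it is exactly parallel to the argument already invoked inside the proof of Lemma~\ref{lem:2gengeo} and should go through without new ideas.
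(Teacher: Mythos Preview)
Your approach has a genuine gap, and it is exactly the step you flag as ``the main obstacle'': the shortlex-descent claim that any geodesic $v\neq\hat g$ admits, after commutations, a single $2$-generator $\tau$-move strictly lowering its shortlex rank. You do not prove this; you appeal to an ``analogue for the present setting of \cite[Theorem~2.4]{HR12}'', but that theorem is about $2$-generator Artin groups, and its analogue for the groups $G$ here is essentially the statement you are trying to prove. Your local argument (find a $2$-generator subword where $v$ and a smaller geodesic first disagree, apply Lemma~\ref{lem:2gengeo}) requires knowing that a maximal $\{x,y\}$-subword of a $G$-geodesic is geodesic in $G_{xy}$, i.e.\ isometric embedding of dihedral parabolics. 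You assert this follows from \cite{MairesseMatheus} ``together with the commutation structure'' or ``because $W$ restricts to geodesics there'', but neither is a proof: the first is a result about $2$-generator groups and says nothing about embedding in $G$, and the second is circular (you would need to know that $W$ restricted to $\{x,y\}$-words equals the set of $G_{xy}$-geodesics, which is again a convexity statement). Even granting convexity, there is no reason a rank-lowering move must exist at the point of first disagreement; two geodesics can be joined by a sequence of $\tau$-moves in which the shortlex rank goes up before it comes down, and nothing in your argument rules this out. Note also that the reduction process of the second paragraph, applied to $w\in W$, simply returns $w$ (all prefixes of a word in $W$ lie in $W$), so it does not by itself move $w$ anywhere.

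The paper takes a completely different route that avoids shortlex altogether. It defines the equivalence $\sim$ on $W$ generated by commutations and $2$-generator $\tau$-moves, sets $\Omega:=W/{\sim}$, and constructs a \emph{right action of $G$ on $\Omega$}: for $x\in A$, put $[w]x:=[wx]$ if $wx\in W$, and $[w]x:=[v]$ if $wx\notin W$ reduces via its optimal RRS to $vx^{-1}x$. The real work, done in Propositions~\ref{lem:6.1}--\ref{lem:6.4}, is to show this is well-defined on classes (if $w\sim w'$ then the optimal RRS outputs $v\sim v'$) and respects the Artin relations. Once $G$ acts, $[w]=[\emptyword]\cdot w=[\emptyword]\cdot w'=[w']$ for any $w,w'\in W$ representing the same element, which is the final statement; and only \emph{then} does the paper deduce $W=\{\text{geodesics}\}$. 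So your assumption of the first assertion to prove the third also reverses the paper's logical order.
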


This section contains the proof of Theorem~\ref{thm:main_details}.
We describe the steps of the proof now. The proof uses the results of Section~\ref{sec:RRS} together with details that are verified in
Propositions~\ref{lem:6.1}--\ref{lem:6.4} that follow. 

We need to prove that a word $w$ is in the set $W$ of words admitting no RRS
(see Definition~\ref{def:RRS}) if and only if it is geodesic.
We observed as we defined $W$ that it must contain
all geodesic words. We recall that if a word $w$ admits an RRS then it admits
an optimal RRS which, by Proposition~\ref{prop:unique_optRRS}, is unique for
words $w$ with $\p{w} \in W$.

In order to establish the first statement of Theorem~\ref{thm:main_details},
we need to show that a non-geodesic word must admit an RRS.  To do that, we 
define a binary relation $\sim$ on $A^*$ by $w \sim w'$ if and only if $w'$ can
be obtained from $w$ by carrying out a sequence of commutations and $\tau$-moves
on critical 2-generator subwords. It is clear that $\sim$ is an equivalence 
relation, and Propositions~\ref{lem:6.1}\,(i) and~\ref{lem:6.2}\,(i) below
ensure that it restricts to an equivalence relation on $W$. For $w \in W$,
we denote the equivalence class of $w$ by $[w]$ and we denote the set of all
equivalence classes within $W$ by $\Omega$.

We aim to prove that $\Omega$ corresponds bijectively to $G$, where $g \in G$
corresponds to $[w]$ for any geodesic representative $w$ of $g$. Our strategy
is to define a right action of $G$ on $\Omega$. This action will have the
property that, whenever $x \in A$ and $w,wx \in W$, we have $[w]x=[wx]$.
We shall now complete the proof of Theorem~\ref{thm:main_details}, and
provide the details of the action subsequently.

Let $w,w'\in W$ be two representatives of the same element of $G$.
Then, since all prefixes of $w$ and $w'$ lie in $W$, we see that
the elements $[w]$, $[w']$ of $\Omega$ are equal to the images of
$[\emptyword]$ under the action of the elements of $G$ represented by $w,w'$.
Since $w,w'$ represent the same element, it follows that $[w]=[w']$.
This establishes the final assertion of Theorem~\ref{thm:main_details}.
Conversely, the definition of $\sim$ ensures that any two words related by
$\sim$ must represent the same element.  Hence we have a correspondence
between the elements of $G$ and the equivalence classes of $\sim$ on $W$.

But now if $w,w' \in W$ represent the same element, with $w'$ geodesic 
but $w$ non-geodesic then, since the two words represent the same element we
must have $[w]=[w']$, so $w \sim w'$. But the definition of $\sim$ ensures
two equivalent words have the same length, which is a contradiction.
It follows that every word in $W$ is geodesic.

The proof of the first statement within Theorem~\ref{thm:main_details} is now
complete. So now suppose that $w=x_1\cdots x_n$ over $A$ is input.
Setting $w_0 = \emptyword$, for each $i=1,\ldots, n$ we apply
Proposition~\ref{prop:unique_optRRS} to find in time 
linear in $i$ a word $v_i \in W$ that represents $v_{i-1}x_i$. Hence in time
quadratic in $n$, we find $v_n \in W$ that represents $W$. The first statement
within Theorem~\ref{thm:main_details} ensures that $v_n$ is geodesic,
and hence we can decide whether $w=_G 1$. This completes the proof of the second statement of 
Theorem~\ref{thm:main_details}.

We turn now to the definition of the right action of $G$ on $\Omega$.
We start by specifying the action of an element $x \in A$ on the equivalence
class $[w]$ of a word $w \in W$. To show that this is well-defined,
it is sufficient to verify that $[w]x = [w']x$ for words $w,w' \in W$ for
which a single commutation or $\tau$-move on a critical 2-generator subword
transforms $w$ to $w'$.

If $wx \in W$ then, since $wx \sim w'x$, we have $w'x \in W$
and we define $[w]x := [wx] (= [w'x])$.  

If $wx \not \in W$ then $w'x \not\in W$. Then by
Proposition~\ref{prop:unique_optRRS} $wx$ has a unique optimal RRS, which
transforms $wx$ to a word of the form $vx^{-1}x$ where,
by Propositions~\ref{lem:6.1}\,(i) and~\ref{lem:6.2}\,(i),
$vx^{-1} \in W$ and hence $v \in  W$. Furthermore, $w'x$ has a unique optimal
RRS which, by Propositions~\ref{lem:6.1}\,(ii)\,(b)
and~\ref{lem:6.2}\,(ii)\,(b) is the RRS $V$ specified there (note that the
words $wx$ and $w'x$ here are the words referred to as $w$ and $w'$ in
Propositions~\ref{lem:6.1} and~\ref{lem:6.2}). Then, by
Propositions~\ref{lem:6.1}\,(ii)\,(c) and~\ref{lem:6.2}\,(ii)\,(c),
$V$ transforms $w'x$ to a word of the form $v'x^{-1}x$ with $v \sim v'$.
In this case we define $[w]x := [v] (= [w']x = [v'])$.

In either case, we have a well defined image of $[w]$ under $x$.

We need to consider the effect of applying first $x$ and then $x^{-1}$ to $[w]$.
We consider separately the two cases $wx \in W$ and $wx \not \in W$.

If $wx \in W$, then $wxx^{-1} \not \in W$, and indeed
$wxx^{-1}$ admits an RRS of length $0$ that transforms it to $w$.
Hence by definition $[wx]x^{-1} = [w]$, and so in this case
$([w]x)x^{-1} = [w]$.
If $wx \not \in W$ then, as we saw above, we have $w \sim vx^{-1}$ for
a word $v \in W$ of length $|w|-1$, and we defined $[w]x = v$.
So again we have $([w]x)x^{-1} = [vx^{-1}] = [w]$.
Hence the maps $[w]\mapsto^x [w]x$ extend to an action of the free group of rank
$|S|$ on $W$, for which $[w]u = ([w]u')x$ for any word $u=u'x$ over $A$.

Our next step is to verify that, for each $w \in W$, we have
$[w]ef=[w]fe$ for commuting generators $e$ and $f$, and
$[w]\cdot {}_n(s,t)=[w]\cdot {}_n(t,s)$ for non-commuting generators
$s$ and $t$ with ${}_n(s,t)={}_n(t,s)$.
These results are proved as Propositions~\ref{lem:6.3},~\ref{lem:6.4} below;
those propositions imply that our action of the free group on $\Omega$ induces
to an action of $G$ as required. This finishes 
the proof of the theorem.

We provide proofs of the Propositions~\ref{lem:6.1}--\ref{lem:6.4} below. These
correspond to Lemmas (6.1)--(6.4) of \cite{BCMW}.
Our arguments were guided by those of \cite{BCMW} and we have kept
our notation as close to the notation of that article as possible, in order to
aid comparison.

\begin{proposition}\label{lem:6.1}
Suppose that the word $w \in A^*$ admits an optimal RRS
$U=u_1,\ldots,u_m,u_{m+1}$
with decomposition $w=\mu w_1 \cdots w_mw_{m+1}\gamma$.
Suppose also that $w$ has a subword $\zeta$ that is the product of two
commuting elements of $A$, and let $w'$
be the word obtained from $w$ by replacing $\zeta$ by its reverse.
Then:
\begin{mylist}
\item[(i)] The word $w'$ admits an RRS of length $m$;
we denote that by $V$, and the associated decomposition by
$w' = \nu y_1 \cdots y_{m+1}\eta$.
\item[(ii)] Suppose that $\p{w} \in W$, so that $\gamma$ is a single letter
$\xx$, and that $\zeta$ is within $\p{w}$.
Then the RRS $V$ can be chosen to ensure that:
  \begin{mylist}
  \item[(a)] $\eta=\gamma=\xx$;
	  \quad{(b)}\quad $V$ is optimal;
  \item[(c)] if $w \rightarrow_U vx^{-1}x$ and $w' \rightarrow_V v'x^{-1}x$, 
               then $v$ can be transformed to
               $v'$ using only commutation relations of $G$.
   \end{mylist}
\end{mylist}
\end{proposition}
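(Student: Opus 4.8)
The proof is a case analysis driven by the interaction between the commutation $\zeta \to \bar\zeta$ and the decomposition $w = \mu w_1\cdots w_m w_{m+1}\gamma$. I would organise it around \emph{where} the two letters of $\zeta$ sit relative to the blocks $w_i$. The easy cases are when $\zeta$ lies wholly inside a single $w_i$, inside $\mu$, or inside $w_{m+1}$: then the same critical words $u_j$ (for $j \ne i$) are unchanged, $w_i$ is replaced by a commutation-equivalent word $y_i$, and one checks that $u_i$ remains critical of the same type with the same $\alpha,\rho,\beta$ data — this uses that criticality of P2G- and P3G-words is defined up to commutations (Definitions~\ref{def:P2G}, \ref{def:P3G}), and that $\tau$ is insensitive to such rearrangement. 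For part~(ii), when $\zeta$ is inside $\p{w}$ and so avoids the final letter $\xx = \gamma$, conclusion~(a) is immediate; (b) follows because the leftmost possible starting position, condition (ii) of Definition~\ref{def:optRRS} (the non-cancellation conditions), and condition (iii) are all preserved by an internal commutation, possibly after re-choosing $V$ as the optimal RRS guaranteed by Proposition~\ref{prop:exists_optRRS} once we know $w'$ admits some RRS; and (c) is the observation that the sequence of $\tau$-moves performed by $U$ and by $V$ differ only by commutations, so the two outputs $vx^{-1}x$ and $v'x^{-1}x$ are commutation-equivalent, whence so are $v$ and $v'$.

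\textbf{The hard part.} The genuine difficulty is the \emph{boundary} cases, where the two letters of $\zeta$ straddle the junction between consecutive blocks $w_{i}$ and $w_{i+1}$, or between $w_m$ and $w_{m+1}$, or between $w_{m+1}$ and $\gamma$ (the last excluded in (ii) by hypothesis). Here moving one letter across the boundary changes \emph{which} subword of $w'$ plays the role of $w_i$ or $w_{i+1}$, and one must show the critical word $u_i$ is still critical after absorbing or shedding that letter. The commuting letter being moved is necessarily an internal letter of $u_i$ or $u_{i+1}$ (it commutes with its neighbour, which lies in $P_i$ or is $\f{u_{i+1}}$), so the key point is that shifting an internal letter across the $u_i/u_{i+1}$ overlap does not disturb $\widehat{u_i}$, $\widehat{u_{i+1}}$, or (in the P3G case) $\widehat{u_i^\#}$; one invokes Lemma~\ref{lem:RRSdetails}\,(ii)--(iv) to control $\beta_i$ (at most two syllables, of tightly constrained form) and Corollary~\ref{cor:RRSdetails2} to locate $\alpha_{i+1}$ within $w_{i+1}$, and checks that the letter being moved really can be reassigned between $\beta_i$ and the prefix of $u_{i+1}$ without violating Definitions~\ref{def:P2Gabr}, \ref{def:P3Gabr}. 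The P3G boundary cases (letter moving past $c^{\ee}b^{\kk}$ in Definition~\ref{def:RRS}(ii), or within $(u_i)_r$) require the most care, and I expect these to be the bulk of the write-up.

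\textbf{Structure of the argument.} I would first dispose of the ``interior'' cases in a short paragraph, then treat the boundary cases one $u_i/u_{i+1}$ junction at a time, in each instance exhibiting the modified blocks $y_i$ explicitly and citing the relevant clause of Lemma~\ref{lem:RRSdetails} to confirm criticality and the values of $\ee,\kk$ (or $\l{\tau(\widehat{u_{i-1}})}$) are unchanged, so that the recursion of Definition~\ref{def:RRS} reconstructs the same $u_{i+1},\ldots,u_{m+1}$ for $w'$. For (ii)(b) I would note that once (i) gives an RRS for $w'$ of length $m$ starting at $\nu$, and since an internal commutation cannot move the optimal starting position further right (any RRS for $w'$ pulls back to one for $w$ by reversing $\zeta$), optimality of the chosen $V$ follows from Proposition~\ref{prop:exists_optRRS} together with the uniqueness of the optimal RRS for words with penultimate-prefix in $W$ (Proposition~\ref{prop:unique_optRRS}). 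Part (ii)(c) is then a one-line tracking of the $\tau$-move sequence. The main obstacle, to repeat, is the combinatorial bookkeeping at the block boundaries in the P3G case; everything else is routine once the definitions are unwound.
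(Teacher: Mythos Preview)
Your overall strategy—case analysis by the position of $\zeta$ relative to the decomposition, with the boundary cases requiring explicit reassignment of a letter between adjacent blocks—is exactly what the paper does, and your identification of Lemma~\ref{lem:RRSdetails} and Corollary~\ref{cor:RRSdetails2} as the tools controlling $\alpha_{i+1}$ and $\beta_i$ at a junction is correct.

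There is, however, a real gap in your case list: you omit the boundary between $\mu$ and $w_1$. When $\zeta$ straddles this junction (so $\l{\zeta}=\f{w_1}$ or $\f{\zeta}=\f{w_1}$), the RRS $V$ you construct for $w'$ starts one position to the left or right of $U$; the paper treats these as its Cases~2 and~3 and proves optimality for them directly, by appealing to Procedure~\ref{proc:unique_optRRS} to see that the criticality type of $y_1$ is forced and that no shorter critical suffix of the same type exists. This case is not merely one more item on the list: it is the bootstrap for your pull-back argument for (ii)(b). Your claim that ``an internal commutation cannot move the optimal starting position further right (any RRS for $w'$ pulls back to one for $w$)'' is not self-evidently true, because the pull-back $V'\mapsto U'$ may itself land in the $\mu/w_1$ boundary case of $V'$, where the starting position shifts by one. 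The paper's argument is: in Cases~4--8, if $V$ were not optimal, pull the true optimal RRS $V'$ back to an RRS $U'$ for $w$; optimality of $U$ forces $U'$ to start no further right than $U$, hence strictly left of $V'$, which can only happen if the pull-back is an instance of Case~2; but Case~2 has already been shown to preserve optimality, so $U'=U$, and then one checks this forces $\zeta$ to sit at the start of $w_1$, contradicting the case hypothesis. Without first handling the $\mu/w_1$ boundary, this circle does not close.

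A smaller point: (ii)(c) is not quite one line in the boundary cases. When $\f{\zeta}\in\rho(u_i)$ is pushed across the $w_i/w_{i+1}$ junction, applying $U$ sends that letter to the left of $\tau(\widehat{u_i})$, whereas applying $V$ sends it to the left of $\tau(\widehat{v_{i+1}})$; these are different positions in $v$ and $v'$, and you must observe that every letter between them commutes with $\f{\zeta}$. This is easy but needs saying. Also, your suggestion to ``re-choose $V$ as the optimal RRS guaranteed by Proposition~\ref{prop:exists_optRRS}'' would lose the explicit description of $V$ you need for (ii)(c); the paper instead constructs $V$ explicitly in each case and then verifies it is optimal.
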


\begin{proof} 
We observe first that, if the hypotheses of part (ii) of the proposition hold,
then it follows from the fact that $\zeta$ is within $\p{w}$
that $\l{w}=\l{w'}$.
Further, in this case, we cannot have $\p{w'} \not \in W$, since then it 
	would follow from part (i) of the proposition applied to $\p{w}$ 
	that $\p{w} \not \in W$; so $\p{w'} \in W$ and $\eta$ is a single 
	letter, and hence (ii)(a) follows immediately from (i).

Thus we need only prove parts (i), (ii)\,(b) and (ii)\,(c).

We write $\zeta^\Rev$ for the reverse of $\zeta$.
We derive the RRS $V$ of part (i) and associated decomposition for $w'$ by
examining the effect that replacement of the subword $\zeta$ of $w$ by
$\zeta^\Rev$ has on the factorisation $\mu w_1\cdots w_{m+1}\gamma$.

We split the proof into a list of cases that depend on
where the subword $\zeta$ occurs in the decomposition of $w$.
Note that, by the start  or beginning of a subword of $w$ or $w'$, we mean
the location of its leftmost letter and, by the end, we mean the location
of its rightmost letter.

\smallskip
{\bf Case 1:} $\zeta$ is a subword of $\mu$ or of $\s{\gamma}$.

\begin{addmargin}[1em]{0em}
$w'$ has $U$ as an optimal RRS associated with a decomposition of $w'$ of the
form $\nu w_1\cdots w_{m+1}\eta$, and the results are straightforward.
\end{addmargin}

\smallskip
{\bf Case 2:} $m>0$, and $\zeta$ overlaps the right-hand end of $\mu$ and the
left-hand end of $w_1$.

\begin{addmargin}[1em]{0em}
Recall that (by definition) $w_1=u_1$.
We define $y_1:=\f{u_1}\f{\zeta}\s{u_1} $, $v_1 := y_1$,
$y_i:= w_i, v_i := u_i$ for $i>1$, $\nu := \p{\mu}$, $\eta := \gamma$.
So $\nu y_1\cdots y_{m+1}\eta = w'$.
It follows from Definitions~\ref{def:P2G} and ~\ref{def:P3G}
that $v_1$ is critical of the same type as $u_1$, with $\f{\zeta}=\l{\mu}$
forming part of $\alpha(v_1)$,
and by definition we have $\tau(y_1)= \f{\zeta}\tau(u_1)=\l{\mu}\tau(u_1)$.  
So $V$ is an RRS.

To prove that $V$ is an optimal RRS for $w'$ given the hypotheses for
part (ii), it remains to verify those conditions of
Definition~\ref{def:optRRS} for optimality that refer to $y_1=v_1$;
we just need to check conditions (iii) and (i).
Since $v_1$ has the same criticality type as $u_1$ and
Definition~\ref{def:optRRS}\,(iii) holds for $U$, it must also hold for $V$.

	Suppose that  Condition (i) of Definition~\ref{def:optRRS}
fails for $V$. In that case, Procedure~\ref{proc:unique_optRRS} ensures that 
the unique optimal RRS for $w'$ is associated with a decomposition
$\nu'y'_1y_2\cdots y_{m+1}\eta$ of $w'$, with $y'_1$ a proper suffix of $y_1$.
We see from Procedure~\ref{proc:unique_optRRS} that the criticality type
of $u_1$ is determined either by $(u_1)_r$ or, in the case when this type
could be $\{a,b\}$ or $(a,b,c)$, by whether $\vv_1=\nu_1y_1$  has a
P2G-critical suffix of type $\{a,b\}$ that forms the first term of an
RRS for $w$. These properties are not affected by substituting $\zeta^\Rev$ for
$\zeta$, so the criticality type of $y_1'$ is the same as that of $u_1$.
But since $u_1$ has no proper critical suffixes of the same type, this
could only happen if $|y_1'| = |u_1|-1$, and that would give
This contradiction completes the verification of the optimality of $V$.

Since application of $V$ to $w'$ yields the same result as application of $U$
to $w$, we have $v=v'$ in part (ii)\,(c). 
\end{addmargin}

\smallskip
{\bf Case 3:} 
$m>0$, and the subword $\zeta$ starts at the beginning of $w_1$.

\begin{addmargin}[1em]{0em}
Let $w'_1$ be the suffix of $w_1$ of length $|w_1|-2$
(so that $w_1=\zeta w_1'$),  and define $y_1 := \f{\zeta}w_1'$,
$y_i:=w_i$ for $i>1$,
$\nu = \mu\l{\zeta}$, $\eta:=\gamma$, $v_1 := y_1$ and $v_i:= u_i$ for $i>1$.
Since $w_1=u_1$ is critical, and its second letter commutes with its
first, it follows from Definitions~\ref{def:P2G} and ~\ref{def:P3G}
that $v_1$ is critical of the same type as $u_1$.
Now, $\l{\zeta}=\l{\nu}$ forms part of $\alpha(u_1)$ that is not
within $\alpha(v_1)$,
so by definition we have $\tau(u_1)= \f{\zeta}\tau(v_1)$, and $V$ is an RRS.

Just as in Case 2, in order to prove that $V$ is an optimal RRS for $w'$ when
the hypotheses of (ii) hold, we need only to verify 
those conditions of Definition~\ref{def:optRRS} that refer to $y_1=v_1$;
we just need to check conditions (iii) and (i). Condition (iii) is again
clear, and the verification of Condition (i) is similar but more straightforward
to that in Case 2. And as in Case 2 we have $v=v'$ in part (ii)\,(c).

We observe that we have the situation of Case 2, but with the pairs $w$, $w'$,
$U$, $V$, and $\zeta$, $\zeta^\Rev$ interchanged.
\end{addmargin}

In each of the remaining cases, the first letter of the subword $\zeta$ of $w$
starts to the right of the first letter of $w_1$, and we shall see that the
decompositions $U$ and $V$ of $w$ and $w'$ start in the same positions in the
words $w$, $w'$. That is, the left-hand ends of $w_1$ and $v_1$ are in the same
positions.

This means that,
once we have established part (i) in all cases, as well as part (ii) in Case 2,
Condition (i) of the optimality of $V$ in part (ii)\,(b) can be
established in all remaining cases (4 onwards) by the argument of the following paragraph. 

If Condition (i) of Definition~\ref{def:optRRS} for optimality
failed for $V$, then there would be an optimal RRS $V'$ for $w'$ starting further to the
right in $w'$ than $V$.  
If we now perform the substitution $\zeta^\Rev \to \zeta$ on $w'$ and apply 
part (i) to $V'$, we obtain an RRS $U'$ for $w$; since the RRS $U$ for $w$ was 
assumed optimal, $U'$ cannnot start further right in $w$ than $U$ does, and 
hence  $U'$ must 
start further to the left in $w$ than $V'$ does in $w'$.
It follows that the application of part (i) of the proposition that derives the RRS $U'$ for $w$ from the RRS $V'$ for $w'$ is an instance of Case 2; 
and in that case optimality has already been proved, so $U'$ must be optimal
and hence (by Proposition~\ref{prop:unique_optRRS}) equal to $U$.
Now, from the arguments of Case 2, we see that we must have 
$u_1 = \f{y'_1}\f{\zeta^R}\s{y'_1}$ and hence the second letter of $\zeta$ is the second letter of $u_1$, from which we deduce that the first letter of $\zeta$ is the first letter of $u_1$.
But then our current substitution $\zeta \to \zeta^\Rev$ of $w$ would
have to be an instance of Case 3 of this proposition, which it is not.
So we have established a contradiction and hence Condition (i) of Definition~\ref{def:optRRS}  must hold for $V$.

\smallskip
{\bf Case 4:} $\zeta$ is contained within $w_i$ for some $i \le m$, but $\zeta$
does not include the first letter of $w_1$ when $i=1$.
In this case, $\zeta$ is within $u_i$ but starts after $\f{u_i}$.

{\it Case 4(a)}: $\zeta$ does not contain the last letter of $w_i$.

\begin{addmargin}[1em]{0em}
Either at least one of the letters of $\zeta$ is an internal letter of $u_i$,
or $u_i$ is of type $(a,b,c)$ 
	and the letters of $\zeta$ have names $a,c$.
	In the second case 
	(in the notation of Definition~\ref{def:P3G}),
	we cannot have $a^{\pm 1} $ in $\alpha(u_i)$, since in that case $u_i$ must start with $c^{\pm 1}$, and then 
	since $U$ must be optimal, by Lemma~\ref{lem:properties_optRRS}(ii)
	 $\f{u_i}$ would be the only letter of name $c$ in $(u_i)_p$ and hence the first letter of $\zeta.$
	So in that second case, $a^{\pm 1}$ is in the subword $(u_i)_r$ 
	and the other letter of $\zeta$ is an adjacent letter
$c^{\pm 1}$ (which could be $\l{(u_i)_q}$ or $\f{\alpha((u_i)_r))}$).

In either case,
we find a decomposition for $w'$ with $\nu:=\mu$, $\eta:=\gamma$,
$y_j:=w_j$, $v_j:=u_j$ for $j \neq i$, and $y_i$ formed from $w_i$ by
replacing its subword $\zeta$ by $\zeta^\Rev$.
That same commutation move transforms $u_i$ to a critical word $v_i$ and
either $\tau(u_i)=\tau(v_i)$, or the two letters of $\zeta$ are both
internal and we get $\tau(v_i)$ from $\tau(u_i)$ by replacing a subword
$\zeta$ by $\zeta^\Rev$. So we have a corresponding RRS for $w'$ with
$v_j=u_j$ for $j\neq i$.

Conditions (ii) and (iii) of Definition~\ref{def:optRRS} for optimality of $V$
in part (ii)\,(b) follow immediately from the optimality of $U$, and for
part (ii)\,(c) either $v=v'$ or $v'$ is obtained from $v$ by replacing
a subword $\zeta$ by $\zeta^\Rev$.
\end{addmargin}

\smallskip
{\it Case 4(b)}: $\zeta$ ends at the end of $w_i$. 

\begin{addmargin}[1em]{0em}
Then $\f{\zeta}$ is in $\beta(u_i)$ or $\rho(u_i)$, and in the former case
we have $i < m$ and $\alpha(u_{i+1})=\emptyword$ by Lemma~\ref{lem:RRSdetails}.
We define a decomposition for $w'$ with $\nu:=\mu$, $\eta:=\gamma$, $y_j:=w_j$
and $v_j:=u_j$ for $j \neq i,i+1$, but $y_i := w'_i\l{\zeta}$,
where $w'_i$ is the prefix of $w_i$ of length
$|w_i|-2$, and $y_{i+1}:= \f{\zeta}w_{i+1}$.
Then the word $v_i := u'_i\l{\zeta}$, where $u'_i$ is the prefix of $u_i$ of
length $|u_i|-2$, is critical, because it is derived from $u_i$ by deleting a
letter of $\beta(u_i)$ or $\rho(u_i)$, and we have
$\tau(u_i) = \tau(v_i)\f{\zeta}$ (when $\f{\zeta}$ is in $\beta(u_i)$) or $\tau(u_i) = \f{\zeta}\tau(v_i)$ (when $\f{\zeta}$ is in $\rho(u_i)$). 
We find that $v_{i+1} = u_{i+1}$ in the former case, and $v_{i+1}$ is equal
to $u_{i+1}$ with the letter $\f{\zeta}$ removed in the latter case.
In either case we have $v_j=u_j$ for $j \ne i,i+1$, and we can check that the
sequence $V=v_1,\ldots,v_{m+1}$ satisfies the conditions of
Definition~\ref{def:RRS} associated with the decomposition we have described.

Again Conditions (ii) and (iii) of the optimality of $V$ in part (ii)\,(b)
follow easily from the optimality of $U$.  In part (ii)\,(c) of the
proposition we have $v=v'$ when $\f{\zeta}$ is in  $\beta(u_i)$.
When $\f{\zeta}$ is in $\rho(u_i)$, it is pushed to the left when we apply
the RRS $U$ and ends up immediately to the left of the transformed word
$\tau(\widehat{u_i})$, whereas it ends up at the beginning of the transformed
word $\tau(\widehat{v_{i+1}})$ after applying the RRS $V$. So this letter is in
different positions in the words $v$ and $v'$ but the letters in between these
two positions all commute with $\f{\zeta}$, and so part (ii)\,(c) holds.
\end{addmargin}

\smallskip
{\bf Case 5:} $\zeta$ overlaps the end of $w_i$ and the beginning of $w_{i+1}$ 
for some $i < m$. 

\begin{addmargin}[1em]{0em}
(This reverses the transformation in Case 4(b).)
We define a decomposition of $w'$ with $\nu:=\mu$, $\eta := \gamma$,
$y_j := w_j$ and $v_j := u_j$ for $j \ne i,i+1$,
$y_i := \p{w_i}\zeta^\Rev$ and $y_{i+1}:= \s{w_{i+1}}$.
Then $v_i := \p{u_i}\zeta^\Rev$ is critical with
$\tau(v_i) = \tau(u_i)\f{\zeta}$ or $\f{\zeta}\tau(u_i)$ (when $\f{\zeta}$ is
in $\beta(v_i)$ or $\rho(v_i)$ respectively), and $v_{i+1}=u_{i+1}$ in
the former case.  We can check that the sequence $V=v_1,\ldots,v_{m+1}$
satisfies the conditions of Definition~\ref{def:RRS} associated with the
decomposition we have described.  Part (ii) is again straightforward,
although if $\l{\zeta}$ is in $\rho(v_i)$ then this letter ends up in
in different positions in the words $v$ and $v'$ in part (ii)\,(c), with
letters that commute with it in between these two positions.
\end{addmargin}

\smallskip
{\bf Case 6:} the end of $\zeta$ is within $w_{m+1}$. 

\begin{addmargin}[1em]{0em}
If $m>0$, or $m=0$ and $\f{\zeta}$ is not the first letter of $u_1=w_1$,
then $\zeta$ lies in $\s{u_{m+1}}$, and consists of letters
that commute with $\f{u_{m+1}}$. The RRS $V$ is defined by replacing
$w_{m+1}$ and $u_{m+1}$ by the results of replacing their subwords $\zeta$
by $\zeta^\Rev$, and the required properties of $V$ are easily seen to hold.

Otherwise $m=0$ and $\zeta$ consists of the first two letters of $u_1=w_1$,
and the associated decomposition of $w'$ is
$\mu \l{\zeta} \f{\zeta} w_1' \gamma$ with $v_1=y_1 := \f{\zeta} w_1'$,
where $w_1'$ is the suffix of $w_1$ of length $|w_1|-2$.
Part (ii) is straightforward in both cases, with $v=v'$ in part (ii)\,(c).
\end{addmargin}

In the remaining two cases $\zeta$ intersects $\gamma$ non-trivially, so the
hypotheses of part (ii) of the proposition do not hold.

\smallskip
{\bf Case 7:} the end of $\zeta$ is at the beginning of $\gamma$.

\begin{addmargin}[1em]{0em}
Suppose first that $m>0$.  We claim that $\l{w_m}$ does not commute
with $\f{\gamma}$.
To see this, note that by Lemma~\ref{lem:RRSdetails}\,(i) $w_m$ is 2-generator
critical with $\beta_m = \emptyword$. So after applying the first $m$ steps of
the RRS, the new letter at the end of $w_m$ (which becomes $\f{u_{m+1}}$) is
the other critical generator of $u_m$, and so it does not commute with the
original $\l{w_m}$.  But $\f{u_{m+1}} =\f{\gamma}^{-1}$, so $\l{w_m}$ does not
commute with $\f{\gamma}$.  So $\f{\zeta}$ is in $w_{m+1}$ in this case,
and we find a decomposition of $w'$ with $\nu:= \mu$, $y_i:= w_i$ and
$v_i:=u_i$ for $i \leq m$, $y_{m+1} := \p{w_{m+1}}$
and $v_{m+1} := \p{u_{m+1}}$, and $\eta := \zeta^\Rev\s{\gamma}$.
The case $m=0$ is also straightforward,
\end{addmargin}

\smallskip
{\bf Case 8:} the beginnings of $\zeta$ and $\gamma$ coincide.

\begin{addmargin}[1em]{0em}
(This reverses the transformation in Case 7.) If $m>0$ we find a decomposition
of $w'$ with $\nu:= \mu$, $y_i:= w_i$ and $v_i:=u_i$ for $i \leq m$,
$y_{m+1} := w_{m+1}\l{\zeta}$ and $v_{m+1} := u_{m+1}\l{\zeta}$, and
$\eta := \f{\zeta}\gamma'$, where $\gamma'$ is the suffix of $\gamma$ of
length $|\gamma|-2$.
The case $m=0$ is again straightforward,
\end{addmargin}
\end{proof}

\begin{proposition}\label{lem:6.2}
Suppose that the word $w \in A^*$ admits an optimal RRS
$U=u_1,\ldots,u_m,u_{m+1}$
with decomposition $w=\mu w_1 \cdots w_mw_{m+1}\gamma$.
	Suppose also that $w$ has a $2$-generator $\{s,t\}$-critical subword $\zeta$, and 
let $w'$ be the word obtained from
$w$ by replacing the subword $\zeta$ by $\tau(\zeta)$.
Then:
\begin{mylist}
\item[(i)] The word $w'$ admits an RRS of length $m$;
we denote that by $V$, and the associated decomposition by
$w' = \nu y_1 \cdots y_{m+1}\eta$.
\item[(ii)] Suppose that $\p{w} \in W$, so that $\gamma$ is a single letter
$\xx$, and that $\zeta$ is within $\p{w}$.
Then the RRS $V$ can be chosen to ensure that:
  \begin{mylist}
  \item[(a)] $\eta=\gamma=\xx$;
	  \quad{(b)}\quad $V$ is optimal;
  \item[(c)]  if $w \rightarrow_U vx^{-1}x$ and $w \rightarrow_V v'x^{-1}x$,
               then $v$ can be transformed to
               $v'$ using only commutation relations of $G$ and 2-generator $\tau$-moves
   \end{mylist}
\end{mylist}
\end{proposition}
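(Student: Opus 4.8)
The plan is to argue in close parallel with the proof of Proposition~\ref{lem:6.1}: I will carry out a case analysis according to where the critical subword $\zeta$ sits in the decomposition $w = \mu w_1 \cdots w_m w_{m+1}\gamma$, and in each case exhibit an RRS $V$ for $w'$ of length $m$ with a decomposition $w' = \nu y_1 \cdots y_{m+1}\eta$ whose left-hand end coincides with that of $U$ whenever $\zeta$ does not meet the left-hand end of $w_1$; optimality of $V$ and statement (ii)(c) will then be verified much as in Proposition~\ref{lem:6.1}. The only genuinely new work, compared with the corresponding argument in \cite{BCMW}, is caused by the possibility that $\zeta$ meets one of the P3G-critical words $u_i$.

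First I would dispose of the cases in which $\zeta$ barely interacts with the critical words of $U$. If $\zeta$ lies entirely within $\mu$, or within $\s{\gamma}$, or (apart from the degenerate possibility $m=0$, $\f{\zeta}=\f{w_1}$) within $w_{m+1}$, the point is simply that $\tau(\zeta)$ is again a word over $\{s,t\}$: by Lemma~\ref{lem:2gen_taufacts} only the \emph{names} of its first and last letters change, and every letter of $\tau(\zeta)$ has name in $\{s,t\}$, so it has the same commutation behaviour with its neighbours as $\zeta$. Hence $U$ itself, with $\mu$, $\gamma$, or $w_{m+1}$ and $u_{m+1}$ modified by the substitution, serves as $V$, and parts (i) and (ii)(a)--(c) are immediate with $v=v'$. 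When $\zeta$ meets $\gamma$ non-trivially the hypotheses of part (ii) fail; one argues as in Cases 7 and 8 of Proposition~\ref{lem:6.1}, using Lemma~\ref{lem:RRSdetails}(i) to see that $\l{w_m}$ does not commute with $\f{\gamma}$, which together with the fact that $\zeta$ is a freely reduced $\{s,t\}$-word pins down the overlap, so that the required RRS of length $m$ is produced by moving the affected letters across the seam (and, where necessary, replacing $\widehat{u_m}$ by a $\tau$-related critical word via Lemma~\ref{lem:critsubword}).

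The substantial cases are those where $\zeta$ meets $w_1 = u_1$, or one of the subwords $w_i$ inside $u_i$ for $i>1$ (possibly overlapping the seam between $w_i$ and $w_{i+1}$). Here I would distinguish: (a) all letters of $\zeta$ are internal letters of the relevant $u_i$, or lie within one of $\alpha_i$, $\beta_i$, $\rho_i$ --- then, since these letters are subject only to commutation constraints with the pseudo-generators, substituting $\tau(\zeta)$ for $\zeta$ inside $w_i$ (and, as in Cases 4(b) and 5 of Proposition~\ref{lem:6.1}, splitting $\zeta$ across a seam if it ends at the end of $w_i$) preserves the criticality type of $u_i$ and yields $V$; (b) $\zeta$ is a critical subword of one of the two-generator critical words $\widehat{u_i}$ or $\widehat{u_i^\#}$ --- then Lemma~\ref{lem:critsubword} shows that after the $\tau$-move the relevant two-generator word is still critical, and, in the case where that word is $\widehat{(u_i)_r}$, Lemmas~\ref{lem:2gengeo} and~\ref{lem:abcrit1} show that its image under $\tau$ is still of the form $b^{\ii}a^{\jj}b^{\kk}$, so that condition (i) of Definition~\ref{def:P3G} survives; one then reads off $V$ and its decomposition, with the left-hand end of $y_1$ possibly shifted right when $i=1$. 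Optimality of $V$ in part (ii)(b) is then checked as in Proposition~\ref{lem:6.1}: conditions (ii) and (iii) of Definition~\ref{def:optRRS} are inherited since the criticality types of the $u_i$ are unchanged, and condition (i) follows from Proposition~\ref{prop:unique_optRRS}, using that the data by which Procedure~\ref{proc:unique_optRRS} determines the type of the first critical word (namely $(u_1)_r$, or the presence of a P2G-critical suffix of a prescribed type) are unaffected by the substitution $\zeta\leftrightarrow\tau(\zeta)$, together with Lemma~\ref{lem:properties_optRRS}(i). Finally, in (ii)(c) the words $vx^{-1}x$ and $v'x^{-1}x$ obtained by applying $U$ to $w$ and $V$ to $w'$ differ only by a single two-generator $\tau$-move (the move on $\zeta$, or on the two-generator critical subword obtained from it after pushing letters across seams) together with some commutations, which is exactly what (ii)(c) asserts.

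\textbf{The main obstacle.} I expect the hard part to be case (b) when $u_i$ is P3G-critical of type $(a,b,c)$ and $\zeta$ is a $\{b,c\}$-critical word meeting the prefix $(u_i)_p(u_i)_q\alpha((u_i)_r)\rho((u_i)_r)b^{\ii}$ that constitutes $u_i^\#$, or an $\{a,b\}$-critical word meeting $(u_i)_r$ and straddling into $(u_i)_q$: here one must check that after the substitution $u_i^\#$ is still P2G-critical of type $\{b,c\}$ with $\beta(u_i^\#)$ trivial and with $\widehat{u_i^\#}$ of the right syllable structure, and it is precisely the hypothesis $m_{bc}\ge 5$ --- the absence of $A_3$ and $B_3$ subdiagrams --- that keeps the syllable counts from colliding, exactly as in the proofs of Lemma~\ref{lem:RRSdetails} and Proposition~\ref{prop:exists_optRRS}. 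A subsidiary issue, not present in \cite{BCMW}, is that $\zeta$ (being a possibly long critical word) might in principle overlap several consecutive $w_i$'s; here one uses the bound of at most two syllables on $\beta_i$ for $i<m$, noted after Lemma~\ref{lem:RRSdetails}, together with the commutation structure at the seams, to show that $\zeta$ can in fact overlap only a controlled portion of the decomposition, reducing every such configuration to one of the cases already treated.
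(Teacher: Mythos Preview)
Your plan captures the overall shape of the argument correctly, and most of your easy cases are handled as in the paper. But there is a genuine gap in the substantial cases: your dichotomy (a)/(b) misses the situation that is the whole point of this proposition and of the paper.

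Specifically, consider the case where $\zeta$ straddles the seam between $w_i$ and $w_{i+1}$, with $u_i$ P2G-critical of type $\{r,t\}$ for some $r\ne s$, so that $\l{w_i}$ has name $t$, and the letters with name $s$ in $\zeta$ lie in $\alpha_{i+1}$. Here $s$ is \emph{not} a pseudo-generator of $u_i$ (so your case (b) does not apply), but $s$ is also not an internal letter of $u_i$ (so your case (a) does not apply either): the letters named $s$ are in $w_{i+1}$, not in $w_i$. The paper shows, using $m_{rs}=2$, $m_{st}=3$ and the $A_3$/$B_3$-free hypothesis to force $m_{rt}\ge 5$, that $\zeta$ must have the shape $t^{\ii}s^{\jj}t^{\kk}$, and that after replacing $\zeta$ by $\tau(\zeta)$ the word $v_i$ is no longer P2G-critical of type $\{r,t\}$ but has become \emph{P3G-critical of type $(s,t,r)$}. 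This is exactly the phenomenon illustrated in Example~\ref{eg:tricky_w_in_G} and is the reason P3G-critical words were introduced; the paper says so explicitly. Your plan, which asserts that ``the criticality types of the $u_i$ are unchanged'', cannot handle this case.

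Conversely, the ``main obstacle'' you identify --- $\zeta$ a $\{b,c\}$-critical subword of a P3G-critical $u_i$ of type $(a,b,c)$ --- is in fact excluded: by Lemma~\ref{lem:critsubword} such a subword would give $u_i$ a critical suffix of the same type contained in $w_i$, contradicting Lemma~\ref{lem:properties_optRRS}\,(i). The actual delicate case inside a P3G-critical $u_i$ is when $\zeta$ is an $\{a,b\}$-critical subword of $(u_i)_q(u_i)_r$; here the paper splits according to whether $\zeta$ equals, properly ends before, or properly contains $(u_i)_r$, and in the first subcase $v_i$ drops from P3G to P2G type $\{b,c\}$. You also need the exclusion argument (which again uses the $A_3$/$B_3$-free hypothesis) ruling out the possibility that exactly one of $s,t$ is an internal letter of $u_i$; without it your case (a) is not well-posed.
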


\begin{proof}
By essentially the same argument as we used at the beginning of the proof of
Proposition~\ref{lem:6.1}, it suffices to prove parts (i), (ii)\,(b) and
(ii)\,(c).

Aiming for greater readability, 
we have divided this proof into two parts.
Part 1 describes the 
construction of a sequence $V$ associated with a decomposition of $w'$ that we claim is an optimal RRS;
as in the proof of Proposition~\ref{lem:6.1}, we follow
a list of cases according to the position of the subword $\zeta$ within $w$.
	We leave it to the reader to verify in each case that $V$ is an RRS,
	and hence that (i) holds,
as well as (ii)\,(b) and (ii)\,(c) in Cases 1,7 and 8.
Part 2 of the proof contains technical details that verify claims made in part 1 as well
 as details of the proofs of
	 (ii)\,(b) and (ii)\,(c) in (most of) the cases 2--6. We consider that part 2 could be ignored on a preliminary reading.  

	\subsubsection*{Part 1: construction of $V$:}
\smallskip
{\bf Case 1:}  $\zeta$ is within $\mu$ or $\s{\gamma}$.

\begin{addmargin}[1em]{0em}
As in Proposition~\ref{lem:6.1}, 
we define $V:=U$, 
associated with a decomposition of $w'$ of the form $\nu w_1\cdots w_{m+1}\eta$.
\end{addmargin}

\smallskip
{\bf Case 2:} The end of $\zeta$ (i.e. its last letter) is at the beginning of
$w_1$ (its first letter), and either 
\begin{mylist}
\item[(a)] $m=0$; or $m>0$ and either 
\item[(b)] $w_1$ is a P2G-critical word exactly one of whose pseudo-generators
is $s$ or $t$, or $w_1$ is a critical word of type $(a,b,c)$ and
$|\{s,t\} \cap \{b,c\}|=1$; or 
\item[(c)] $\alpha(u_1)$ does not commute with both $s$ and $t$.
\end{mylist}

\begin{addmargin}[1em]{0em}
In all of these cases we define $V$ of length $m+1$, with $v_1=y_1$ equal to the
shortest critical suffix of $\tau(\zeta)$, and $y_2:=\s{w_1}$,
$y_i:=w_{i-1}$ for $i>2$, and $v_i := u_{i-1}$ for $i>1$.
\end{addmargin}

\smallskip
{\bf Case 3:} $w_1$ begins within $\zeta$, and $\zeta$ ends within $w_1$, 
and either
\begin{mylist}
\item[(a)] $\zeta$ ends at the beginning of $w_1$ and we are not in Case 2,
so all three of the following hold:
(1) $m>0$; (2) $w_1$ is either P2G-critical with pseudo-generators $\{s,t\}$
or critical of type $(a,b,c)$ with $\{b,c\}=\{s,t\}$; and
(3) $\alpha(u_1)$ commutes with both $s$ and $t$; or
\item[(b)] the intersection $\zeta'$ of the two subwords $\zeta$ and $w_1$
within $w$ has length greater than one.
\end{mylist}

\begin{addmargin}[1em]{0em}
Suppose first that $\zeta$ ends at the end of $w_1$ (this can only happen
within  Case 3(b)).  Then we define $V$ of length $m-1$, with
$y_1 := \l{\tau(\zeta)} w_2$, $y_i:=w_{i+1}$ for $i>1$, and $v_i:=u_{i+1}$
for $i \ge 1$. This reverses the transformation in Case~2.

Otherwise (we could be in Case 3(a) or 3(b))
$w_1$ has a non-empty suffix $w_{1s}$ that is not part of $\zeta$. In
that case we define $V$ of length $m$, with $y_1=v_1$ the shortest critical suffix
of $\tau(\zeta) w_{1s}$ that has the same criticality type as $u_1$,
and $y_i:=w_i$, $v_i :=u_i$ for $i>1$. 
\end{addmargin}

In each of the remaining Cases 4--8 we define $V$ to have the same length as $U$, with the
beginnings of $w_1$ and $y_1$ in the same positions in $w$ and $w'$.

\smallskip
{\bf Case 4:} $\zeta$ is a subword of $w_i$ for some $1 \le i \le m$,
but does not start at the beginning of $w_1$ (when $i=1$).

\begin{addmargin}[1em]{0em}
If $s$ and $t$ are both internal letters of $u_i$, then we define $v_i$ and
$y_i$ to be the words $u_i$ and $w_i$ with the subword $\zeta$ replaced by
$\tau(\zeta)$, and $v_j:=u_j$, $y_j:=w_j$ for $j \ne i$.

We exclude the possibility that exactly one of $s,t$ is an internal letter of
$u_i$ in Case 4 of Part 2.
It remains to deal with the case where both $s$ and $t$ are pseudo-generators of
	$u_i$.  In this case, as is verified in Case 4 of Part 2 below,  
$u_i$ has type $(a,b,c)$
and $\zeta$ is an $\{a,b\}$-critical subword of $(u_i)_q(u_i)_r$ in the
notation of Definition~\ref{def:P3G}.
So the intersection of $(u_i)_q$ and $\zeta$ is a possibly empty suffix
of $(u_i)_q$, and equal to a power of $b$.
	Recall from Definition~\ref{def:P3G} that $\tau(\widehat{(u_i)_r})$
	has the form
$b^{\ii}a^{\jj}b^{\kk}$.

	In the case where $\zeta=(u_i)_r$ and so 
$\tau(\zeta) = b^{\ii} a^{\jj} b^{\kk} = \tau((u_i)_r)=\tau(\widehat{(u_i)_r})$, then,
where $w_i=w_{ip} \zeta$ and $u_i = u_{ip} \zeta$,
we define $y_i:= w_{ip}b^{\ii}$, $v_i := u_{ip}b^{\ii}$,
$y_{i+1} := a^{\jj} b^{\kk} w_{i+1}$, $v_{i+1}:=u_{i+1}$, and
$y_j:=w_j$, $v_j:=u_j$ for $j \ne i,i+1$.
We see that the component $v_i$ of $V$  is P2G-critical of
type $\{b,c\}$.

If  $\zeta \neq (u_i)_r$ and $\zeta$ ends before
the end of $(u_i)_r$,
then we define $y_i$ and $v_i$ to be the words we derive from $w_i$ and 
$u_i$ respectively
by replacing their subwords $\zeta$ by $\tau(\zeta)$; the resulting words still end in letters with name $a$,
and we define $y_j:=w_j$, $v_j := u_j$ for $j \ne i$.

Finally, if $\zeta$ contains $(u_i)_r$ as a proper suffix then
	the word derived from $w_i$ by replacement of its subword $\zeta$ by $\tau(\zeta)$
	ends with a nonzero power $b^t$
of $b$, which becomes a prefix of $y_{i+1}$ in the decomposition $V$.
More precisely, writing $w_i = w_{ip} \zeta$, $u_i = u_{ip} \zeta$, and $\tau(\zeta) = \zeta' b^t$ we have $y_i := w_{ip} \zeta'$,
$v_i := u_{ip} \zeta'$, $y_{i+1} := b^t w_i$, $v_{i+1} := u_{i+1}$,
and $y_j:=w_j$, $u_j := v_j$ for $j \ne i,i+1$.
\end{addmargin}

\smallskip
{\bf Case 5:} $\zeta$ starts within $w_i$ and $\zeta$ intersects
$w_{i+1}$ non-trivially for some $1 \le i < m$.

\begin{addmargin}[1em]{0em}
Using the optimality of $U$ and $A_3$, $B_3$-free assumption, we prove
in Case 5 of Part 2 below that there are essentially only two different possibilities that we
need consider for the criticality types of $u_i$ and $u_{i+1}$.

	{\bf Subcase 5\,(a)}
In this first of the two subcases, $u_i$ has type $(a,b,c)$ and $u_{i+1}$ has type
$\{b,c\}$ or $(a,b,c)$, and $\zeta$ is an $\{a,b\}$-word with
$\zeta = \zeta' b^t$ for some suffix $\zeta'$ of $(u_i)_r$ and prefix $b^t$ of
$w_{i+1}$.  Writing $w_{i} = w_i' \zeta'$, $u_i = u_i' \zeta'$, and
$w_{i+1} = b^t \pi$ for prefixes $w_i'$ and $u_i'$ of $w_i$ and $u_i$, and
suffix $\pi$ of $w_{i+1}$, the word $w'$ has an RRS $V$ with
$y_{i} := w_i' \tau(\zeta)$, $v_i := u_i' \tau(\zeta)$, $y_{i+1} := \pi$,
$v_{i+1} := u_{i+1}$, and $y_j:=w_j$, $v_j := u_j$ for $j \ne i,i+1$, where
	$\tau(\widehat{(v_{i})_r})=b^{\ii}a^{\jj}b^{\kk+t}$ 
(note that $\kk$ and $t$ must have the same signs).
This reverses the transformation that we described in the final situation of
Case 4, and applying $V$ to $w'$ gives the same result as applying $U$ to $w$.

	{\bf Subcase 5\,(b)} The second possibility is that $u_i$ has type $\{r,t\}$ for some $r \ne s$
and with $m_{rt} \ge 5$, and $u_{i+1}$ has type $\{r,t\}$, $(a,r,t)$
or $(a,t,r)$ for some generator $a$. We have $m_{rs}=2$ and
$m_{st} = 3$, and $\zeta = t^{\ii}s^{\jj}t^{\kk}$ for some nonzero
$\ii,\jj,\kk$, where $w_{i} = w_i' t^{\ii}$, $u_i = u_i' t^{\ii}$ and
$w_{i+1} = s^{\jj}t^{\kk} \pi$ for prefixes $w_i'$, $u_i'$ of
$w_i$ and $u_i$ and suffix $\pi$ of $w_{i+1}$.
Then the word $w'$ has an RRS $V$ with $y_{i} := w_i' \tau(\zeta)$,
$v_i := u_i' \tau(\zeta)$, $y_{i+1} := \pi$, $v_{i+1} := u_{i+1}$ and
$y_j:=w_j$, $v_j:=u_j$ for $j \ne i,i+1$, where $v_i$ has type $(r,s,t)$.
This reverses the transformation that we described in the first situation
of Case 4, and again applying $V$ to $w'$ gives the same result as applying
$U$ to $w$.

Note also that it was an application of the transformation $w \to w'$ in this
situation that resulted in Example~\ref{eg:tricky_w_in_G} and forced us to
introduce critical words of type $(a,b,c)$.
\end{addmargin}

\smallskip
{\bf Case 6:} $\zeta$ ends within $w_{m+1}$ but not in the first letter of
$w_1$ when $m=0$ (which was considered in Case 2). 

\begin{addmargin}[1em]{0em}
We prove in Part 2 below that this case can only occur when $\zeta$ lies
entirely within $w_{m+1}$ and $u_{m+1}$, in which case we define $y_{m+1}$
and $v_{m+1}$ to be derived from $w_{m+1}$ and $u_{m+1}$ by replacing $\zeta$
by $\tau(\zeta)$, and the required properties of $V$ are clear.
\end{addmargin}

In the final two cases $\zeta$ intersects $\gamma$ non-trivially, so we only
have to prove part (i) of the proposition.

\smallskip
{\bf Case 7:} $\zeta$ intersects both $\gamma$ and $w_mw_{m+1}$ non-trivially.

\begin{addmargin}[1em]{0em}
Since $w_{m+1}$ commutes with $\f{\gamma}$, we must have $w_{m+1}=\emptyword$.
The beginning of $\zeta$ must lie in $w_m$, or we would be in Case 5
(in which case $\zeta$ cannot intersect $\gamma$), and $u_m$ must be a P2G
word by Lemma~\ref{lem:RRSdetails}\,(i). In fact, since the two
pseudo-generators of $u_m$ are the names of $\l{w_m}$ and $\f{\gamma}$, these
must be the generators of $\zeta$.

After applying the first $m-1$ steps of the RRS $U$ to $w$, the resulting word
has the non-geodesic P2G subword $u_m\f{\gamma}$. After replacing $\zeta$ by
$\tau(\zeta)$, we still have a non-geodesic P2G subword starting in the same
place, so we can continue this RRS to get an RRS $V$ of $w'$ (of which the
right-hand end could be to the left of that of the RRS $U$).
\end{addmargin}

\smallskip
{\bf Case 8:} $\zeta$ is a subword of $\gamma$. 

\begin{addmargin}[1em]{0em}
By Case 1 we only need consider the case when 
$\zeta$ is a prefix of $\gamma$. Let $t := \f{\gamma}=\f{\zeta}$.
We apply the first $m$ moves in the RRS $U$ to obtain a word ending
in $t^{-1} w_{m+1} \gamma$. Then $t^{-1} w_{m+1} \zeta$ is a non-geodesic P2G
word, and hence so is $t^{-1} w_{m+1} \tau(\zeta)$, and we can continue our
RRS to obtain an RRS $V$ of $w'$ (of which the 
right-hand end could be to the right of that of the RRS $U$).
\end{addmargin}

\subsubsection*{Part 2: further technical details}

{\bf Case 2:}

\begin{addmargin}[1em]{0em}
{\bf Proof of (ii)\,(c):}
	We have
$\tau(\zeta) = \rho y_1$ for some word $\rho$ and 
obtain the word $v'$ from $v$ by replacing the subword $\p{\zeta}$ of $v$ by
$\rho \p{\tau(y_1)}$. Since the two subwords are geodesics representing the same
element in a $2$-generator Artin group, by Lemma~\ref{lem:2gengeo} we can
obtain one from the other using $\tau$-moves on 2-generator subwords.

{\bf Proof of (ii)\,(b):}
	First suppose that we are in one of the two Subcases 2\,(b) or 2\,(c)
	(when $m>0$). 
Since the suffixes of $w$ and $w'$ to the right of the last letter of $y_1$
are the same, application of Procedure~\ref{proc:unique_optRRS} to $w'$ locates
the words $y_{m+2}=u_{m+1}$, $y_{m+1}=u_m$, \ldots, $y_3=u_2$ of $V'$,
as in the decomposition of $V$, where $v_2$ has the same type as $u_1=w_1$.
(The argument that these types are the same is similar to that in Case 2
of Proposition~\ref{lem:6.1}: the type is determined by a proper suffix of
$v_2$ and is not affected by a change of its first letter.)

	The conditions on Subcases 2\,(b) and 2\,(c)
	ensure that $y_1y_2$ has no critical suffix
of the same type as $u_2$, and by considering
Procedure~\ref{proc:unique_optRRS}, we see it produces the RRS $V$ of length
$m+1$ exactly as we have defined it.  So $V$ is optimal as claimed.

	If $m=0$ (that is in subcase 2(a)), it is easy to see that condition (i) of Definition~\ref{def:optRRS} holds. Then condition (ii) of that definition is
	inherited by $V$ from $U$, while condition (iii) holds vacuously, since in subcase 2\,(a) the hypotheses of condition (iii) do not hold.
\end{addmargin}

{\bf Case 3:}

\begin{addmargin}[1em]{0em}
	Suppose first that we are in Subcase 3\,(b).
	Note that the first two letters of $w_1$ have
distinct names by Lemma~\ref{lem:properties_optRRS}\,(ii).
Since the two generators $s,t$ of $\zeta$ do not commute, we satisfy
	condition (1) of Subcase 3\,(a) that $m>0$.
Further, since $s,t$ are the first two letters of the critical word $w_1=u_1$,
it follows from Definitions~\ref{def:P2Gabr} and~\ref{def:P3Gabr} that
	$\alpha(u_1) = \emptyword$ and so condition (3) from Subcase 3\,(a) holds,
	and then  (from those definitions) condition (2) from Subcase 3\,(a) is immediate. 
	It follows that from now on, which ever of the two subcases we are in, we may assume that all three conditions (1)--(3) from Subcase 3\,(a) hold
	(but note that these conditions alone do not define Subcase 3\,(a)).

	{\bf Proof of (ii)\,(b):}
	If $\zeta$ ends at the end of $w_1$ (subcase 3(a)), then
$\beta(u_1) = \emptyword$ and $w_1$ is a $2$-generator critical word
with $\l{\tau(\zeta)} = \l{\tau(w_1)}=\l{\tau(u_1)}$.
So we find the critical word $u_2$ as a subword of $w'$,
starting at the end of the subword $\tau(\zeta)$ of $w'$, and we
define $y_1$ to be that subword.
Since $U$ is optimal, optimality for $V$ could only fail if
$y_1$ had a proper critical suffix or if condition (iii) of
Definition~\ref{def:optRRS} failed for $i=2$. But a proper critical suffix of $y_1$
would also be a proper critical suffix of $u_2$, and so within $w_2$, and hence
contradict the optimality of $U$, so we need only consider the second possibility.  

	In Subcase 3\,(b) we have $w_1 = \zeta' w_{1s}$ for some suffix $\zeta'$ of $\zeta$
of length at least 2 and a non-empty suffix $w_{1s}$ of $w_1$, where the word
$\zeta w_{1s}$ is critical of the same type as $w_1$.
Since $U$ is optimal, in  this subcase too  we only have to verify the optimality 
Definition~\ref{def:optRRS}\,(i) for $V$ together with (iii) for $i=2$ in order to
complete the proof of
(ii)\,(b).
Since $w$ and $w'$ differ only to the left of their common 
suffix $w_2\cdots w_{m+1}\gamma$, Procedure~\ref{proc:unique_optRRS}
will locate the same subwords $y_{m+1},\ldots,y_2$ in $w'$ as
the subwords $w_{m+1},\ldots,w_2$ of $w$,
the same location of the right-hand end of $y_1$ in $w'$ as that of $w_1$ in
$w$, and we find that the type of $v_1$ is the same as that of $u_1$.
That condition (iii) of Definition~\ref{def:optRRS} also holds, even for $i=2$,
is an immediate consequence of the optimality of $U$.
For condition (i) of optimality, Procedure~\ref{proc:unique_optRRS} will
define $y_1=v_1$ exactly as in our definition of $V$, so $V$ is optimal.

	{\bf Proof of (ii)\,(c):}
In 
	both subcases, 
	the words $v$ and $v'$ in part (ii)\,(c) of the
proposition differ only in $2$-generator geodesic subwords that represent
the same element in the $2$-generator Artin group generated by $s$ and $t$,
and so part (ii)\,(c) holds by Lemma~\ref{lem:2gengeo}.
\end{addmargin}

In each of the remaining cases 4--8, the decompositions $U$ and
$V$ of $w$ and $w'$ start in the same position in the words $w$, $w'$. That
is, the left-hand ends of $w_1$ and $v_1$ are in the same position.
By a similar argument that we used in Proposition~\ref{lem:6.1}, this implies
that condition (i) of the optimality of $V$ holds in part (ii)\,(b)
in these cases.

{\bf Case 4:}
\begin{addmargin}[1em]{0em}
Recall that in this case $\zeta$ is a subword of $u_i$.

	{\bf Proof of (ii)\,(b) and (c) when $s,t$ are both internal:}
Suppose that $s$ and $t$ are both internal letters of $u_i$. 

	First we show that $\zeta$ does not intersect $\beta(u_i)$. 
	For if it did, since it contains only internal letters it would be a (critical) subword of $(u_i)_s$ or $((u_i)_r)_s$  (depending on whether $u_i$ is P2G or P3G).
	But, by Lemma~\ref{lem:RRSdetails}\,(ii) and (iii),
the word $\beta(u_i)$ is either a power of a generator or of the form $b^Ie^J$
for generators $b$ and $e$.
In the second case, we cannot have
	$\{s,t\} = \{b,e\}$, since in that case $\zeta$ would need to be a subword of $\beta$, but $\beta$ certainly has no critical subword.
	So in either case, the intersection of $\zeta$ with $\beta(u_i)$ would
	be a non-trivial power of a single generator, which we may assume to be $s$, and any $s$ in $\zeta$ must be within $\beta$. Then the instances of $t$ within $\zeta$ must be in $\rho(u_i)$, and at least one of those must be to the right of an $s$ within $\zeta$; such a $t$ would have to commute with $s$ (by definition of $\rho$), but cannot since $\zeta$ is critical.

	So $\zeta$ does not intersect $\beta$.
	Then by (Definitions~\ref{def:P2G},\ref{def:P3G}) the whole word $\zeta$ is
pushed to the left of the word when we apply $\tau$ to $u_i$, and we see
	easily that the RRS $V$ that we defined for the word $w'$ is optimal, and (ii)\,(b) is proved.
For (ii)\,(c) in this situation, we note that the word $v'$ is obtained from $v$ by replacing a
	subword $\zeta$ by $\tau(\zeta)$. 

	{\bf Excluding the possibility that just one of $s,t$ is internal:}
We exclude the possibility that exactly one of $s,t$ is an internal letter of
$u_i$ as follows.

Suppose (without loss of generality) that $s$ is an internal letter of 
$u_i$ but that $t$ is not. Since $s$ and $t$ do not commute but each 
occurrence of $s$ within $\zeta$ must be in one of $\alpha_i,\rho_i$ or $\beta_i$, it follows that any letter with name $s$ within $\zeta$ must be either
	to the left or to the right of all powers of $t$ within $\zeta$, and also within $u_i$. So we have $\zeta = s^jt^ks^l$ for some $j,k,l\neq 0$ (so $s_j \in \alpha_i,s_l \in \beta_i$), and hence $m_{st}=3$. 

Now $u_i$ is either P2G-critical, with pseudogenerators $\uu \neq s,t$, 
	or P3G-critical of type $(a,b,c)$.
In the first case, the same argument as above shows that the 2-generator 
critical word $\widehat{u_i}$ must have the form ${\uu}^{j'}t^k{\uu}^{l'}$ for 
some $j',l' \neq 0$, and hence $m_{\uu,t}=3$, $m_{s,\uu}=2$, and we have 
a $(2,3,3)$ triangle ($A_3$ subdiagram) in our Coxeter diagram for $G$, contradicting our hypotheses.
In the second case, the syllable $s^l$ of $\zeta$ must lie in $\beta(u_i) = \beta((u_i)_r)$, so $s$ commutes with $\l{u_i}$, which has name $a$. 
So $t = b$ or $c$.
	If $t=b$ then $m_{ab}=m_{sb}=3$, $m_{sa}=2$, and so we have an
	$A_3$ subdiagram in our Coxeter diagram, contradicting our hypotheses, 
	while if $t=2$ then, since $a$ and $c$ commute, the whole of $\zeta$ lies 
	in $\beta(u_i)$, and so $\beta(u_i)$ has at least three syllables, 
	contradicting Proposition~\ref{lem:RRSdetails}\,(iii).

	{\bf The case where $s,t$ are both pseudo-generators:}
	It remains to consider the possibility
that $s$ and $t$ are both pseudo-generators of $u_i$. In that case it
follows from Lemmas~\ref{lem:critsubword} and~\ref{lem:properties_optRRS}\,(i)
that $u_i$ cannot be a P2G-critical word and if $u_i$ is critical of type
$(a,b,c)$ then $\zeta$ cannot be a critical $\{b,c\}$-subword of that.
So $u_i$ must be critical of type $(a,b,c)$ with
$\zeta$ a critical $\{a,b\}$-subword of the suffix $(u_i)_q{(u_i)}_r$ of $u_i$
in the notation of Definition~\ref{def:P3G}.

	{\bf Proving (ii)\,(b) when $s,t$ are both pseudo-generators:}
In the case when $\zeta = {(u_i)}_r$ and
$\tau(\zeta) = b^{\ii} a^{\jj} b^{\kk}$,
we see that $v_i$ has type $\{b,c\}$ rather than $(a,b,c)$, whereas
$v_{i+1}$ has the same type as $u_{i+1}$, which is $\{b,c\}$ or $(a,b,c)$.
We see easily that optimality conditions (ii) and (iii) still hold for $V$
	in part (ii)\,(b) of the proposition. (Recall that condition (i) does not need to be proved in cases 4--8.)

Otherwise, if $\zeta$ ends before the end of $(u_i)_r$, then, after replacing
$\zeta$ by $\tau(\zeta)$ in $u_i$, the resulting word is still critical of
type $(a,b,c)$, and we get $y_i$ and $v_i$ by making this replacement.

Finally, if $\zeta$ ends at the end of $(u_i)_r$ then, after replacing $\zeta$
in $w_i$ by $\tau(\zeta)$, the resulting word ends with a nonzero power $b^t$
of $b$, which becomes a prefix of $y_{i+1}$ in the decomposition $V$.
Again it is easily seen that optimality condition (ii) and (iii) hold for $V$
in part (ii)\,(b) of the proposition.

{\bf Proving (ii)\,(c) when $s,t$ are both pseudo-generators:}
Applying $V$ to $w'$ gives the same
result as applying $U$ to $w$, and so $v'=v$ in part (ii)\,(c) of the
proposition.
\end{addmargin}

{\bf Case 5:}

{\bf Proof that only two criticality types for $u_i$ are possible (as in subcases 5\,(a) and 5\,(b))}.

\begin{addmargin}[1em]{0em}
	Suppose first that $u_i$ is critical of type $(a,b,c)$.
Then $u_{i+1}$ is critical of type $\{b,c\}$ or $(a',b,c)$ for some generator
$a'$ (which may or may not be equal to $a$) by
Lemma~\ref{lem:RRSdetails}\,(iii). By definition of type $(a,b,c)$, we
have $\l{u_{i}} = \l{w_{i}} = a^{\pm 1}$, and $\zeta$ must be an
$\{a,b\}$-word with $\beta_i$ (which is a power of $c$) empty.
We have $\zeta = \zeta' b^t$ for some suffix $\zeta'$ of $(u_i)_r$ and
non-empty prefix $b^t$ of $w_{i+1}$, and $u_{i+1}=c^{\pm 1}b^{\kk}w_{i+1}$,
and we are in 
	Subcase 5\,(a) 
	of Case 5.
Checking the optimality of $V$ in part (ii)\,(b) of the proposition is straightforward in this situation.

Otherwise $u_i$ is a P2G-critical word. Then $\l{u_i}$ has name $s$ or
$t$, and we assume without loss that it is $t$.  So $\f{w_{i+1}}$ has name $s$.
If the intersection of $u_i$ with $\zeta$ contained both $s$ and $t$, then
$u_i$ would be critical of type $\{s,t\}$, and then $\l{\tau(u_i)}$ would
have name $s$ and we would contradict optimality of $U$ by
Lemma~\ref{lem:properties_optRRS}\,(ii).

So $u_i$ and $\zeta$ intersect in a power of $t$.
By Lemma~\ref{lem:properties_optRRS}\,(ii) again, $u_i$ cannot be critical
of type $\{s,t\}$, so it is critical of type $\{r,t\}$ for some $r \ne s$, and
the power of $s$ at the beginning of $w_{i+1}$ must lie in $\alpha_{i+1}$, and
so $m_{rs}=2$. Since this power of $s$ is followed by a letter with name $t$,
$u_{i+1}$ must have criticality type $\{r,t\}$, $(a,r,t)$, $(a,t,r)$ for
some generator $a$.

If we had $m_{st} > 3$, then $w_{i+1}$ would have a prefix of the form
$s^\jj t^\kk s^\ll $ for non-zero integers $\jj,\kk,\ll$, and so $u_{i+1}$
would have a prefix of the form $r^\ii s^\jj t^\kk s^\ll$ with $\ii \ne 0$.
This could happen only when $m_{rt}=3$ and $m_{st}=4$ with $t^L$ in
$\beta_{i+1}$, which contradicts our hypothesis that the Coxeter diagram has no
$(2,3,4)$-triangles.  (But note that Example~\ref{eg:n_atleast5} shows that
this situation can occur within such a triangle.)

So $m_{st}=3$, and our hypothesis that there are no $(2,3,3)$- or
$(2,3,4)$-triangles implies that $m_{rt} \ge 5$, and we are in 
	Subcase 5\,(b) of
	of Case 5.
Checking the optimality of $V$ again straightforward.
\end{addmargin}

{\bf Case 6:}

\begin{addmargin}[1em]{0em}
	{\bf Proof that $\zeta$ lies entirely within $w_{i+1}$ and $u_{i+1}$:}
Assume for a contradiction that this does not happen. 
	When $m=0$ and $|w_1| \ge 2$ the first two letters of $w_{m+1}$
are distinct and commute, so this cannot occur when $m=0$.

So $m > 0$ and the final letter of $w_m$, which we can assume to have name $t$,
is contained in $\zeta$. So $s$ commutes with but (by optimality of $U$) is
not equal to the other pseudo-generator $r$ of $u_m$. Since $\beta_m$ is empty
by Lemma~\ref{lem:RRSdetails}\,(i),
$\zeta$ and $w_m$ must intersect in a power of $t$, and there must be a
occurrence of a letter with name $t$ in the intersection of $w_{m+1}$ and
$\zeta$. But this letter would have to commute with $\f{u_{m+1}}$, which
has name $r$, so we have a contradiction.
\end{addmargin}
\end{proof}

\begin{proposition}\label{lem:6.3}
	Let $e,f \in S$ be commuting generators. Then
$[wef] = [wfe]$ for all $w \in W$.
\end{proposition}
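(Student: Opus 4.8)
The statement asserts that the right action of the free group on $\Omega$ defined in Section~\ref{sec:proofs} respects the commuting relation $ef = fe$, i.e. that $[w]ef = [w]fe$ for all $w \in W$ and all commuting generators $e,f \in S$. Unwinding the definition of the action, $[w]ef = ([w]e)f$, where $[w]e$ is computed by the two-case recipe: if $we \in W$ then $[w]e = [we]$, and if $we \notin W$ then $we$ reduces via its unique optimal RRS to a word of the form $vx^{-1}x$ (here $x = e$) with $v \in W$, and $[w]e := [v]$. So the proof must compare the two ways of successively applying $e$ then $f$, versus $f$ then $e$, to the class $[w]$.

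The plan is to proceed by strong induction on $|w|$, and to case-split according to how many of the four "intermediate" words $we$, $wef$, $wf$, $wfe$ lie in $W$. The key tool is Proposition~\ref{lem:6.1}: since $ef$ and $fe$ differ by reversing a two-letter commuting subword $\zeta = ef$, that proposition tells us how an optimal RRS for a word ending in $ef$ transforms into an RRS for the corresponding word ending in $fe$, and vice versa, with the outputs related by commutation relations of $G$ (hence in the same $\sim$-class). First I would handle the easy case where $we \in W$: then $[w]e = [we]$, and now I must decide whether $wef \in W$. If $wef \in W$ then $[w]ef = [wef]$; on the symmetric side, reversing $\zeta = ef$ inside $wef$ gives $wfe$, and by Proposition~\ref{lem:6.1}(i) applied contrapositively $wfe \in W$ too (if $wfe$ had an RRS, so would $wef$), so $[wf]$ makes sense via... — more carefully, I would argue that $wf \in W$ (it is a prefix of $wfe \in W$), so $[w]f = [wf]$, and then $wfe \in W$ gives $[w]fe = [wfe]$; finally $[wef] = [wfe]$ because these two words are related by a commutation and hence by $\sim$. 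If instead $wef \notin W$, then by Proposition~\ref{lem:6.1}(ii)(c) the optimal RRS for $wef$ and the RRS $V$ it produces for $wfe$ reduce $wef$ and $wfe$ respectively to words $v, v'$ with $v \sim v'$ (related by commutations of $G$); and by Proposition~\ref{lem:6.1}(ii)(b) that RRS $V$ is optimal, hence $wfe \notin W$ as well, so $[w]ef = [v] = [v'] = [w]fe$.

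The genuinely delicate case is when $we \notin W$. Then $[w]e = [v_e]$ where $we \to_U v_e \,e^{-1} e$ for the optimal RRS $U$ of $we$. The obstacle is that the first application "uses up" some structure of $w$, and I must then understand the class $[v_e]$ well enough to apply $f$ to it, and compare with the path that applies $f$ first. The cleanest route is to argue that the two intermediate geodesics $v_e$ (obtained by processing $e$ first, inside $w$) and $v_f$ (obtained by processing $f$ first) are themselves $\sim$-equivalent — indeed they represent the same group element $wef e^{-1} = wf = we f^{-1}$... wait, $v_e$ represents $we \cdot e^{-1} = w$, not $wf$; so rather I should note that $v_e e$ reduces to a word representing $w$ whose last letter is $e^{-1}$ cancelled, i.e. $v_e =_G w e e^{-1}$ is wrong — let me restate: $we \to v_e e^{-1} e$, so $v_e =_G we \cdot e^{-1} \cdot (e^{-1}e)^{-1}\cdots$; actually $w e =_G v_e e^{-1} e$, so $v_e e^{-1} e =_G we$, giving $v_e =_G w$. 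Good: $v_e$ is a geodesic representative of $w$ itself, hence $v_e \in [w]$ by the already-established fact that all geodesics for a given element of $G$ lie in one $\sim$-class on $W$ (this is exactly the "final assertion" of Theorem~\ref{thm:main_details}, proved earlier in Section~\ref{sec:proofs}). Therefore $[v_e] = [w]$, and symmetrically $[v_f] = [w]$! Consequently $[w]e = [w]$ whenever $we \notin W$, and likewise $[w]f = [w]$ whenever $wf \notin W$. This collapses the hard case: I then chase through the remaining sub-cases — e.g. $we \notin W$ so $[w]e = [w]$, and then either $wf \in W$ giving $[w]ef = [wf]e$ which I evaluate by the $we \notin W$-type analysis on the word $wf$, or $wf \notin W$ giving $[w]ef = [w]f = [w]$ and symmetrically $[w]fe = [w]e = [w]$ — using throughout that $[w]$ is already established to be "fixed" by any generator whose adjunction leaves $W$, and using Proposition~\ref{lem:6.1} to handle the mixed cases where one of the four intermediate words is in $W$ and another is not. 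Putting the sub-cases together yields $[w]ef = [w]fe$ in all cases. The main subtlety to get right is the bookkeeping that $v_e$ genuinely represents $w$ and is geodesic (length $|w|$, since $v_e e^{-1} e$ has length $|w|+1$ and $w$ has length... here we need $w$ not necessarily geodesic, but $v_e \in W$ is geodesic and represents $w$, which forces $w$ itself to have been non-geodesic only if — no: $w \in W$, so by the already-proved first statement of Theorem~\ref{thm:main_details}, $w$ is geodesic, and $|v_e| = |w|$ automatically); and the fact, which I would invoke from the earlier part of Section~\ref{sec:proofs}, that two geodesic words in $W$ representing the same group element are $\sim$-equivalent. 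With that, the proof is essentially a finite case analysis rather than a new computation.
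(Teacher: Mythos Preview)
Your argument contains two substantive errors.

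In the sub-case $we \in W$, $wef \notin W$, you invoke Proposition~\ref{lem:6.1}(ii) with $\zeta = ef$ the final two letters of $wef$. But part~(ii) of that proposition requires $\zeta$ to lie within $\p{wef} = we$, and here the last letter $f$ of $\zeta$ is the last letter of $wef$, so the hypothesis fails; only part~(i) is available, and it gives no relation between the reduced words $v$ and $v'$. The paper's approach is different: since $e$ commutes with $f$, the terminal $e$ of $we$ lies in the factor $w_{m+1}$ of the optimal RRS for $wef$, and removing it yields an RRS for $wf$ (and conversely). Hence $wef \notin W \iff wf \notin W$, and one then checks directly that both $[w]ef$ and $[w]fe$ equal $[w'e]$, where $w \sim w'f^{-1}$.

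Your treatment of the case $we \notin W$ is more seriously flawed. From $we \to_U v_e e^{-1} e$ one obtains $we =_G v_e e^{-1} e =_G v_e$, so $v_e =_G we$, not $v_e =_G w$; indeed $|v_e| = |w|-1$, so $v_e$ cannot lie in $[w]$, whose members all have length $|w|$. The conclusion ``$[w]e = [w]$ whenever $we \notin W$'' is therefore false. The appeal to the final assertion of Theorem~\ref{thm:main_details} is in any case circular, since that assertion is deduced from the existence of the $G$-action on $\Omega$, whose well-definedness depends on the present proposition. The paper instead uses that $we \notin W$ forces $w \sim w'e^{-1}$ for some $w'$ (via the RRS applied to the prefix $w$ of $we$), whence $[w]e = [w']$; combining this with the analogous reduction coming from $wf \notin W$, both $[w]ef$ and $[w]fe$ are computed explicitly and shown to coincide.
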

\begin{proof}
Suppose first that $we \in W$. Then $wef \not\in W$ if and only if $wef$ admits
an RRS the application of which replaces the final $e$ of $we$ by $f^{-1}$,
in which case $w$ is equivalent to a word of form $w' f^{-1}$.
But that is the case if and only if $wf \not \in W$, in which case $[wef]=[wfe]
=[w'e]$. Otherwise $wef$ and $we$ are both in $W$, in which case so is $wfe$
by Proposition~\ref{lem:6.1} and, since $wef$ and $wfe$ are equivalent words
in $W$, we again have $[wef]=[wfe]$. The proof when $wf \in W$ is analogous.

It remains to deal with the case when neither $we$ no $wf$ is in $W$.
Then $w$ is equivalent to a word of form $w' e^{-1}$ and, by Propositions~\ref{lem:6.1} and \ref{lem:6.2} we have that
$w' e^{-1} f \not\in W$, so
$w'$ is equivalent to a word of form $w'' f^{-1}$. Then $[wef]=[w'']$. On the other hand, by the aforementioned propositions $[wfe]=[w'e^{-1}fe]$. Finally, since $w'e^{-1}\in W$ and $e^{-1}$ commutes with $f$, the previous paragraph implies $[w'e^{-1}fe]=[w''e^{-1}e]=[w'']$.
\end{proof}

\begin{proposition}\label{lem:6.4}
Let $s,t \in S$ be a non-commuting pair of generators with
${}_n(s,t) = {}_n(t,s)$.
Then $[w{}\cdot{}_n(s,t)] = [w{}\cdot{}_n(t,s)]$ for all $w \in W$.
\end{proposition}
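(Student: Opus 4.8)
The plan is to mirror the structure of the proof of Proposition~\ref{lem:6.3}, reducing the claim $[w\cdot{}_n(s,t)]=[w\cdot{}_n(t,s)]$ to the already-established facts about how the action of $G$ on $\Omega$ interacts with RRS, together with Lemma~\ref{lem:2gengeo} on $2$-generator geodesics. Write $\sigma := {}_n(s,t)$ and $\sigma' := {}_n(t,s)$, so $\sigma =_G \sigma'$ (they both represent the Garside element $\Delta$ of $G_{st}$, as recalled in Section~\ref{sec:2gen}). The basic idea is to apply the letters of $\sigma$ (resp.\ $\sigma'$) one at a time to $[w]$, using the fact established in Section~\ref{sec:proofs} that the maps $[v]\mapsto[v]x$ are well-defined on $\Omega$ and assemble into an action of the free group of rank $|S|$; the content is to show the two resulting elements of $\Omega$ coincide.

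First I would note the easy case: if every prefix $w\cdot{}_n(s,t)_k$ of $w\sigma$ (for $0\le k\le n$) lies in $W$, then in particular $w\sigma\in W$, and since $w\sigma$ and $w\sigma'$ are equal in $G$ and all their prefixes lie in $W$ (those of $w\sigma'$ lie in $W$ by Lemma~\ref{lem:2gengeo} applied inside $G_{st}$ together with Propositions~\ref{lem:6.1} and~\ref{lem:6.2}, which show $\sim$ restricts to $W$ and is compatible with taking prefixes), the words $w\sigma$ and $w\sigma'$ are $\sim$-equivalent and so $[w\sigma]=[w\sigma']$. Otherwise, some proper prefix leaves $W$: let $k$ be minimal with $w\cdot{}_n(s,t)_k\notin W$, so that $v:=w\cdot{}_n(s,t)_{k-1}\in W$ and the next generator, call it $x$, satisfies $vx\notin W$. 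By Proposition~\ref{prop:unique_optRRS}, $vx$ has a unique optimal RRS, whose application produces a word of the form $\bar v\,x^{-1}x$ with $\bar v\in W$ and $|\bar v|=|v|-1$; then $[vx]=[\bar v]$ by the definition of the action. The point is that $\bar v$ is $\sim$-equivalent (hence equal in $\Omega$) to $w\cdot{}_n(s,t)_{k-2}$ with the last two letters of that word, which form a $2$-generator critical subword over $\{s,t\}$ (namely an alternating word of length $k-1<n$ ending in $x$... ) — more precisely $\bar v =_G w\cdot{}_n(s,t)_{k-1}x^{-1} =_G w\cdot{}_n(s,t)_{k-2}$, shortening the $\{s,t\}$-tail by one, so that $[w\sigma] = [w\cdot {}_n(s,t)_{k-2}\cdot {}_n(?,?)_{n-k+1}]$ with a shorter first factor; by induction on $n$ (or on the length of the remaining alternating tail) both $[w\sigma]$ and $[w\sigma']$ reduce to the same element.

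To organise this cleanly I would instead argue as follows, which I expect to be the smoothest route and is the step I expect to need the most care. Let $g\in G$ be the element represented by $w$, and consider $[\emptyword]g$, i.e.\ apply the word $w$ to $[\emptyword]$; by the construction in Section~\ref{sec:proofs} we have $[\emptyword]g=[w]$ whenever $w\in W$ (and more generally $[\emptyword]g$ depends only on $g$, since the free-group action descends to $G$ once Propositions~\ref{lem:6.3} and~\ref{lem:6.4} are in place — but of course we must avoid circularity and use only Proposition~\ref{lem:6.3} and the $\tau$-move case already handled, not the statement being proved). The cleanest formulation avoiding circularity: by Lemma~\ref{lem:2gengeo}, the two alternating words ${}_n(s,t)$ and ${}_n(t,s)$ representing $\Delta\in G_{st}$ are related by a sequence of $2$-generator $\tau$-moves over $\{s,t\}$ (indeed directly by a single $\tau$-move, since ${}_n(s,t)$ is a $2$-generator critical word with $\tau({}_n(s,t))={}_n(t,s)$ by Definition~\ref{def:2gen_tau}); hence by Proposition~\ref{lem:6.2}, applied repeatedly along the string, for any $w\in W$ we can transform an optimal RRS for $w\cdot{}_n(s,t)$ into an optimal RRS for $w\cdot{}_n(t,s)$ whose application yields words that differ only by commutations and $2$-generator $\tau$-moves.

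Concretely: apply the letters of ${}_n(s,t)$ to $[w]$ one at a time, obtaining words $w=v_0,v_1,\dots,v_n$ in $W$ with $v_j$ a representative of $w\cdot{}_n(s,t)_j$, each $v_j$ obtained from $v_{j-1}\cdot x_j$ (where $x_j$ is the $j$-th letter of ${}_n(s,t)$) by Proposition~\ref{prop:unique_optRRS}; similarly obtain $w=v_0',v_1',\dots,v_n'$ from the letters of ${}_n(t,s)$. I claim $[v_j]=[v_j']$ for all $j$, by induction on $j$, using at the inductive step that $v_{j-1}\sim v_{j-1}'$ (so $v_{j-1}x_j\sim v_{j-1}'x_j'$ when $x_j=x_j'$; but the $j$-th letters of ${}_n(s,t)$ and ${}_n(t,s)$ differ, so this needs the $\Omega$-action's well-definedness, i.e.\ $[v_{j-1}]x = [v_{j-1}']x$ only when $x$ is the same). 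The mismatch of letters is exactly why this is the main obstacle: one cannot induct letter-by-letter on the two different alternating strings directly. The fix is to compare the whole tails at once: since $\tau({}_n(s,t))={}_n(t,s)$, Proposition~\ref{lem:6.2}(ii)(c) applied to the critical subword $\zeta={}_n(s,t)$ sitting at the right end of $w\cdot{}_n(s,t)$ — or rather sitting inside a slightly longer word — shows that the result of reducing $w\cdot{}_n(s,t)\cdot y$ and $w\cdot{}_n(t,s)\cdot y$ (for a final letter $y$, added to make $\zeta$ an interior subword as the hypothesis of Proposition~\ref{lem:6.2}(ii) requires) differ only by commutations and $2$-generator $\tau$-moves; removing the auxiliary letter $y$ via the free-group action then gives $[w\cdot{}_n(s,t)]=[w\cdot{}_n(t,s)]$. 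The delicate point to get right is handling the case where $w\cdot{}_n(s,t)$ (equivalently $w\cdot{}_n(t,s)$) is itself in $W$ — then there is no reduction and one argues directly as in the easy case above — versus the case where it is not, where one applies Proposition~\ref{lem:6.2} with the subword $\zeta={}_n(s,t)$ and notes that $\zeta$ lies within $\p{w\cdot{}_n(s,t)}$ provided $n\ge 2$, which holds since $s,t$ do not commute. I expect the write-up to be short once this reduction to Proposition~\ref{lem:6.2} is set up correctly, with the only real subtlety being the bookkeeping needed to place $\zeta$ in the interior of the word so that part (ii) of that proposition applies.
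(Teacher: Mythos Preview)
Your proposal has a genuine gap: the reduction to Proposition~\ref{lem:6.2} that you settle on cannot be set up as you describe. In the case $w\cdot{}_n(s,t)\notin W$ you want to apply Proposition~\ref{lem:6.2}(ii) with $\zeta={}_n(s,t)$, but $\zeta$ is the \emph{suffix} of $w\cdot{}_n(s,t)$, so it is not contained in $\p{w\cdot{}_n(s,t)}$; your claim to the contrary is simply false. Adding an auxiliary letter $y$ does not help: part~(ii) of Proposition~\ref{lem:6.2} requires $\p{w\cdot{}_n(s,t)\cdot y}=w\cdot{}_n(s,t)$ to lie in $W$, and you are precisely in the case where it does not. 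More fundamentally, the class $[w]\cdot{}_n(s,t)$ is computed by appending the letters of ${}_n(s,t)$ one at a time and reducing to $W$ after each, so the intermediate representatives need not be of the form $w\cdot(\text{prefix of }{}_n(s,t))$ at all; there is no single word to which you can apply a $\tau$-move on a subword $\zeta={}_n(s,t)$. Your ``easy case'' is also incomplete: from all prefixes of $w\cdot{}_n(s,t)$ lying in $W$ you cannot conclude the same for $w\cdot{}_n(t,s)$ (for example, if $w$ ends in $t^{-1}$ then $wt$ freely reduces even though $ws$ may be geodesic), and the prefixes of the two words for $k<n$ represent different group elements and are not $\sim$-related.

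The paper's proof uses an idea you are missing entirely. It first replaces $w$ by a $\sim$-equivalent word whose $\{s,t\}$-suffix is as long as possible, writing $w=w'u$ with $u$ an $\{s,t\}$-word and $w'$ not $\sim$-equivalent to any word ending in $s^{\pm1}$ or $t^{\pm1}$. It then proves the key claim that, under this normalisation, every RRS arising in the letter-by-letter computation of $[w]\cdot{}_n(s,t)$ (and of $[w]\cdot{}_n(t,s)$) has length~$1$, consists of a single $2$-generator $\{s,t\}$-critical word, and does not touch $w'$. This is established by analysing the optimal RRS and using the optimality condition Definition~\ref{def:optRRS}(iii) to rule out longer sequences. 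Once the claim is in place, both computations reduce to rewriting within the $2$-generator group $G_{st}$, and Lemma~\ref{lem:2gengeo} finishes the argument. Your proposal never isolates such an $\{s,t\}$-suffix and has no mechanism for confining the reductions to the $2$-generator part, which is the heart of the proof.
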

\begin{proof}
We start by replacing $w$ by an equivalent word with suffix a longest possible
$\{s,t\}$-word. So $w = w' u$, where $u$ is an $\{s,t\}$-word, and $w'$ is not
equivalent to any word ending in a letter with name $s$ or $t$.

We claim that all reductions resulting from applications of RRS's when
computing $[w{}\cdot{}_n(s,t)]$ result from RRS's of length $1$ consisting of a
single $2$-generator critical $\{s,t\}$-word that does not intersect the prefix
$w'$ of the word, and similarly for $[w{}\cdot{}_n(t,s)]$.
Lemma~\ref{lem:2gengeo} will then imply that $[w{}\cdot{}_n(s,t)]$ and
$[w{}\cdot{}_n(t,s)]$ are both equal to $[w'v]$, where $v$ is a geodesic
representative of $u{}\cdot{}_n(s,t) =_G u{}\cdot{}_n(t,s)$ in the
$2$-generator Artin group $\langle s,t \mid {}_n(s,t)={}_n(t,s) \rangle$.

To prove the claim, suppose not. Then there are words $w'v$ and $w'vx$ with
$v$ an $\{s,t\}$-word, $x \in \{s,s^{-1},t,t^{-1}\}$, $w'v \in W$,
and $w'vx \not\in W$, but $vx \in W$. Let $U$ be the optimal RRS of $w'vx$,
which replaces $w'v$ by a word ending in $x^{-1}$. If $v$ is empty, then $w'$
is equivalent to a word ending in $x^{-1}$, contradicting the choice of $w$.
So $v$ is non-empty and, in the decomposition associated with $U$, $w_{m+1}$
is empty, and $\l{v} = \l{w_m}$ has name different from $x$, so $u_m$
must be critical of type $\{s,t\}$. Furthermore since $vx \in W$ we
must have $m > 1$. The last letter of $w_{m-1}$ cannot lie in $v$,
because that would mean $u_{m-1}$ was critical of type $\{s,t\}$,
$(c,s,t)$, or $(c,t,s)$
for some generator $c$, contradicting condition (iii)
of the optimality of $U$. So the last letter of $w_{m-1}$ is in $w'$. In fact
either this letter is the last letter of $w'$, or $w'$ has a suffix consisting
a prefix of $\alpha_m$.  In either case $w'$ is equivalent to a word ending in
a letter in $\{s,s^{-1},t,t^{-1}\}$, contradicting the choice of $w$.
\end{proof}

\end{document}